\numberwithin{equation}{section}
\theoremstyle{plain}
\newtheorem{proposition}{Proposition}[section]
\newtheorem{theorem}[proposition]{Theorem}
\newtheorem{lemma}[proposition]{Lemma}
\newtheorem{corollary}[proposition]{Corollary}
\theoremstyle{definition}
\newtheorem{definition}[proposition]{Definition}
\theoremstyle{remark}
\renewenvironment{proof}{\smallskip\noindent\emph{\textbf{Proof.}}\hspace{1pt}}%
{\hspace{-5pt}{\nobreak\quad\nobreak\hfill\nobreak$\square$\vspace{8pt}%
\par}\smallskip\goodbreak}
\newenvironment{proofof}[1]{\smallskip\noindent\emph{\textbf{Proof of #1.}}%
\hspace{1pt}}{\hspace{-5pt}{\nobreak\quad\nobreak\hfill\nobreak%
$\square$\vspace{8pt}\par}\smallskip\goodbreak}
\renewcommand{\leq}{\leqslant}
\renewcommand{\geq}{\geqslant}
\newcommand{\pint}[1]{\mathaccent23{#1}}
\newcommand{\C}[1]{\mathbf{C}^{#1}}
\newcommand{\Cc}[1]{\mathbf{C}_c^{#1}}
\newcommand{\modulo}[1]{{\left|#1\right|}}
\newcommand{\norma}[1]{{\left\|#1\right\|}}
\newcommand{\reali}{{\mathbb{R}}}
\newcommand{\naturali}{{\mathbb{N}}}
\newcommand{\BV}{\mathbf{BV}}
\renewcommand{\epsilon}{\varepsilon}
\renewcommand{\phi}{\varphi}
\renewcommand{\theta}{\vartheta}
\renewcommand{\L}[1]{{\mathbf{L}^#1}}
\newcommand{\W}[2]{{\mathbf{W}^{#1,#2}}}
\renewcommand{\O}{\mathcal{O}(1)}
\newcommand{\Lloc}[1]{{\mathbf{L}_{loc}^{#1}}}
\newcommand{\tv}{\mathinner{\rm TV}}
\renewcommand{\div}{\nabla_x\cdot}
\newcommand{\spt}{\mathop{\rm spt}}
\newcommand{\sgn}{\mathop{\rm sgn}}
\renewcommand{\d}[1]{\mathinner{\mathrm{d}{#1}}}
\begin{document}

\title{Differential Equations Modeling Crowd Interactions}

\author{Raul Borsche$^1$, Rinaldo M.~Colombo$^2$, Mauro Garavello$^3$,
  Anne Meurer$^1$}

\footnotetext[1]{Fachbereich Mathematik, Technische Universit\"at
  Kaiserslautern}

\footnotetext[2]{Unit\`a INdAM, c/o DII, Universit\`a degli studi di
  Brescia}

\footnotetext[3]{Dipartimento di Matematica e Applicazioni,
  Universit\`a di Milano
  Bicocca}

\maketitle

\begin{abstract}

  \noindent Nonlocal conservation laws are used to describe various
  realistic instances of crowd behaviors. First, a basic analytic
  framework is established through an \emph{ad hoc} well posedness
  theorem for systems of nonlocal conservation laws in several space
  dimensions interacting non locally with a system of ODEs.  Numerical
  integrations show possible applications to the interaction of
  different groups of pedestrians, and also with other \emph{agents}.

  \medskip

  \noindent\textit{2000~Mathematics Subject Classification:} 35L65,
  90B20.

  \medskip

  \noindent\textit{Keywords:} Non-Local Conservation Laws, Crowd
  Dynamics, Car Traffic.
\end{abstract}

\section{Introduction}
\label{sec:I}

This paper deals with a system composed by several populations and
individuals, or agents. The former are described through their
\emph{macroscopic densities}, the latter through \emph{discrete
  points}. In analytic terms, this leads to a system of conservation
laws coupled with ordinary differential equations. From a modeling
point of view, it is natural to encompass also interactions that are
\emph{nonlocal}, in both cases of interactions within the populations
as well as between each population and each individual agent.

Throughout, $t \in \reali^+$ is time and the space coordinate is $x
\in \reali^d$. The number of populations is $n$ and their densities
are $\rho^i = \rho^i (t,x)$, for $i=1, \ldots, n$. The individuals are
described through a vector $p = p (t)$, with $p \in \reali^m$. In the
case of $N$ agents, $p$ may consist of the vector of each individual
position, so that $m = N\,d$, or else it may contain also each
individual speed, so that $m = 2 \, N \, d$.

Setting $\rho = (\rho^1, \ldots, \rho^n)$, we are thus lead to
consider the system
\begin{equation}
  \label{eq:1}
  \left\{
    \begin{array}{l}
      \displaystyle
      \partial_t \rho^i
      +
      \div
      \left[
        q^i(\rho^i) \;
        v^i \!\! \left(
          t, x, \left(\mathcal{A}^i \left(\rho (t)\right)\right) (x), p
        \right)
      \right]
      =
      0,
      \\[10pt]
      \displaystyle
      \dot p = F\left(t,p,\left(\mathcal{B} \left(\rho (t)\right)\right) (p)\right),
    \end{array}
  \right.
\end{equation}
where $\mathcal{A}^i$ and $\mathcal{B}$ are nonlocal operators,
reflecting the fact that the behavior of the members of the population
as well as of the agents depends on suitable spatial averages. The
function $v^i$ gives the speed of the $i$-th population, and $F$ yields
the evolution of the individuals. We defer to Section~\ref{sec:AR} for
the precise definitions and regularity requirements.

Motivations for the study of~\eqref{eq:1} are found, for instance,
in~\cite{BCG_Nonlinearity, BCG_JDE, m3as-cgm, ColomboMercier,
  ColomboMagali}, which all provide examples of realistic situations
that fall within~\eqref{eq:1}. Beside these, system~\eqref{eq:1} also
allows to describe new scenarios, some examples are considered in
detail in Section~\ref{sec:EX}. There, we limit our scope to
$\reali^2$ (i.e., $d=2$) essentially due to visualization problems in
higher dimensions. The analytic treatment below, however, is fully
established in any spacial dimension.

As a first example, in Section~\ref{sub:tourist} we study two groups
of tourists each following a guide. The two groups are described
through the pedestrian model in~\cite{m3as-cgm, ColomboMercier,
  ColomboMagali} and the guides move according to an ODE. Each group follows its
guide and interacts with the other group, while both guides need to
wait for their respective group.

Section~\ref{sub:car} is devoted to pedestrians crossing a
street at a crosswalk, while cars are driving on the road. The
pedestrians' movement is described as in the previous example, the
attractive role of the guides being substituted by a repulsive effect
of cars on pedestrians. On the other hand, cars move according to a
follow the leader model and try to avoid hitting pedestrians. This
results in a strong coupling between the ODE and PDE, since the
pedestrians can not cross the street if a car is coming and on the
other hand the cars have to stop if there are people on the road.

As a third example, see Section~\ref{sub:hools}, two groups of
hooligans confront with each other. Police officers try to separate
the two groups heading towards the areas with the strongest mixing of
hooligans. Thus, they move according to the densities of the
hooligans, which themselves try to avoid the contact with the police.
All examples are illustrated by numerical integrations showing central
features of the models.

The current literature offers alternative approaches to the modeling
of crowds \cite{helbing1995social,hughes2002continuum}. Notably, we recall the so called \emph{multiscale}
framework, based on measure valued differential equations,
see~\cite{CristianiPiccoliTosin, PiccoliTosin2009,
  PiccoliTosin}. There, the interplay between the atomic part and the
absolutely continuous part of the unknown measure reminds of the
present interplay between the PDE and the ODE. Nevertheless,
differently from the cited references, here we exploit the distinct
nature of the two equations to assign different roles to agents and
crowds.

This paper is organized as follows: in Section~\ref{sec:AR} we give a
precise definition of a solution of system~\eqref{eq:1} and state the
main analytic results.  In Section~\ref{sec:EX} we describe three
examples which fit into the above framework and present accompanying numerical
integrations.  All the technical details are collected in
Section~\ref{sec:technical}.

\section{Analytical Results}
\label{sec:AR}
In this section we state some analytical results for solutions of~\eqref{eq:1}.
Throughout we denote $\reali^+ = [0, +\infty[$, $R$ is a positive
constant and $I \subseteq \reali^+$ is an interval containing $0$.

The function $q^i$ describes the internal dynamics of the population $\rho^i$ and is required to satisfy
\begin{description}
\item[(q)] $q^i \in \C2\left(\reali^+; \reali^+\right)$ satisfies $q^i
  (0) = 0$ and $q^i(R) = 0$.
\end{description}
For the ``velocity'' vectors $v^i$ we require the following
regularity
\begin{description}
\item[(v)] For every $i \in \left\{1, \ldots, n\right\}$ the velocity
  $v^i: \reali^+ \times \reali^d \times \reali^{d} \times \reali^m \to
  \reali^d$ is such that
  \begin{description}
  \item[(v.1)] $v^i \in (\C2 \cap \L\infty) (\reali^+ \times \reali^d \times
    \reali^{d} \times \reali^m; \reali^d)$.
  \item[(v.2)] For all $T \in \reali^+$ and all compact set $K \subseteq
    \reali^m$, there exists a function $\mathcal{C}_K \in (\L1 \cap
    \L\infty) (\reali^d; \reali^+)$ such that, for $t \in [0,T]$, $x
    \in \reali^d$, $A \in \reali^{d}$ and $p \in K$
    \begin{equation*}
      \begin{array}{@{}r@{\,}c@{\,}l@{\quad}r@{\,}c@{\,}l@{}}
        \norma{v^i\left(t,x, A,p\right)}_{\reali^d}
        & < &
        \mathcal{C}_K (x),
        \\[.2cm]
        \norma{\nabla_x \cdot v^i\left(t,x, A,p\right)}_\reali
        & < &
        \mathcal{C}_K (x),
        &
        \norma{\nabla_x \nabla_x \cdot v^i\left(t,x,A,p\right)}_{\reali^d }
        & < &
        \mathcal{C}_K (x),
        \\[.2cm]
        \norma{\nabla_A v^i\left(t,x,A,p\right)}
        _{\reali^d \times \reali^{d}}
        & < &
        \mathcal{C}_K (x),
        &
        \norma{\nabla_p v^i\left(t,x,A,p\right)}_{\reali^{m} \times
          \reali^d}
        & < &
        \mathcal{C}_K (x),
        \\[.2cm]
        \norma{\nabla_x \nabla_A v^i\left(t,x,A,p\right)}
        _{\reali^d \times \reali^{d}\times \reali^{d}}
        & < &
        \mathcal{C}_K (x),
        &
        \norma{\nabla_x \nabla_p v^i\left(t,x,A,p\right)}_{\reali^m
          \times \reali^ d \times \reali^d }
        & < &
        \mathcal{C}_K (x),
        \\[.2cm]
        \norma{\nabla^2_A v^i\left(t,x,A,p\right)}
        _{\reali^d \times \reali^{d}\times \reali^{d}}
        & < &
        \mathcal{C}_K (x),
        &
        \norma{\nabla_A \nabla_p v^i\left(t,x,A,p\right)}_{\reali^m
          \times \reali^ d \times \reali^d }
        & < &
        \mathcal{C}_K (x).
      \end{array}
    \end{equation*}
  \end{description}
\end{description}

\noindent Remark however that~\textbf{(v.2)} becomes redundant as soon
as the initial datum to~\eqref{eq:1} has compact support and the
solution is seeked on a bounded time interval, see
Corollary~\ref{cor:zero}.

We impose to the ordinary differential equation in~\eqref{eq:1} to fit
into the usual framework of Caratheodory equations,
see~\cite[\S~1]{Filippov}, introducing the following conditions.
\begin{description}
  \setlength{\itemsep}{0pt} \setlength{\parskip}{0pt}
\item[(F)] The map $F \colon \reali^+ \times \reali^m \times
  \reali^\ell \longrightarrow \reali^m$ is such that
  \begin{enumerate}[leftmargin=0pt]
  \item For all $t>0$ and $b \in \reali^\ell$, the function $
    \begin{array}{ccc}
      \reali^m & \longrightarrow & \reali^m \\
      p & \longmapsto & F(t,p,b)
    \end{array}
    $ is continuous.

  \item For all $t>0$ and $p \in \reali^m$, the function $
    \begin{array}{ccc}
      \reali^\ell & \longrightarrow & \reali^m \\
      b & \longmapsto & F(t,p,b)
    \end{array}
    $ is continuous.

  \item For all $b \in \reali^\ell$ and $p \in \reali^m$, the function
    $
    \begin{array}{ccc}
      \reali^+ & \longrightarrow & \reali^m \\
      t & \longmapsto & F(t,p,b)
    \end{array}
    $ is Lebesgue measurable.

  \item For all compact subset $K$ of $\reali^m$, there exists a
    constant $L_F > 0$ such that, for every $t \in \reali^+$, $p_1,
    p_2 \in K$ and $b_1,b_2 \in \reali^\ell$,
    \begin{displaymath}
      \norma{F(t,p_1,b_1) - F(t,p_2,b_2)}_{\reali^m}
      \leq
      L_F \, \left(
        \norma{p_1 - p_2}_{\reali^m}
        +
        \norma{b_1 - b_2}_{\reali^\ell}
      \right) \,.
    \end{displaymath}

  \item There exists a function $C_F \in \Lloc1(\reali^+; \reali^+)$
    such that for all $t>0$, $b \in \reali^\ell$ and $p \in \reali^m$
    \begin{displaymath}
      \norma{F(t,p,b)}_{\reali^m}
      \leq
      C_F(t) \,
      \left(1 + {\norma{p}}_{\reali^m} + \norma{b}_{\reali^\ell}\right) \,.
    \end{displaymath}
  \end{enumerate}
  \end{description}
On the nonlocal operators $\mathcal{A,B}$ we require
\begin{description}
\item[($\boldsymbol{\mathcal{A}}$)] The maps $\mathcal{A}^i\colon
  \L1(\reali^d; \reali^n) \to (\C2 \cap \W21) (\reali^d; \reali^d)$
  are Lipschitz continuous and satisfy $\mathcal{A}^i (0) = 0$.  In
  particular there exists a positive constant $L_A>0$ such that, for
  every $\rho_1, \rho_2 \in \L1(\reali^d; [0,R]^n)$,
  \begin{equation*}
    \norma{\mathcal{A}^i (\rho_1) - \mathcal{A}^i (\rho_2)}_{\W21} +
    \norma{\mathcal{A}^i (\rho_1) - \mathcal{A}^i (\rho_2)}_{\C2}
    \leq L_A \norma{\rho_1 - \rho_2}_{\L1}.
  \end{equation*}

\item[($\boldsymbol{\mathcal{B}}$)] The map $\mathcal{B}\colon
  \L1(\reali^d; \reali^n) \to \W1{\infty}(\reali^m; \reali^\ell)$ is
  Lipschitz continuous and satisfies $\mathcal B(0) = 0$. In
  particular, there exists a positive constant $L_B > 0$ such that,
  for every $\rho_1, \rho_2 \in \L1(\reali^d; [0,R]^n)$,
  \begin{equation*}
    \norma{\mathcal{B} (\rho_1) - \mathcal{B} (\rho_2)}_{\W1{\infty}}
    \leq L_B \norma{\rho_1 - \rho_2}_{\L1}.
  \end{equation*}

\end{description}

\begin{definition}
  \label{def:solution-1}
  Fix $\rho_o \in (\L1 \cap \BV) (\reali^d; \reali^n)$ and $p_{o} \in
  \reali^m$.  A vector $\left(\rho, p\right)$ with
  \begin{displaymath}
    \rho \in \C0\left(I; \L1(\reali^d; \reali^n)\right)
    \qquad \textrm{ and } \qquad
    p \in \W11 (I; \reali^m)
  \end{displaymath}
  is a solution to~\eqref{eq:1} with $\rho (0,x) = \rho_o(x)$ and
  $p(0) = p_o$ if
  \begin{enumerate}
  \item For $i=1, \ldots, n$, the map $\rho^i$ is a Kru\v zkov
    solution to the scalar conservation law
    \begin{displaymath}
      \partial_t \rho^i
      +
      \div
      \left[
        q^i(\rho^i)\, V (t,x)
      \right]
      = 0
      \quad \mbox{ where } \quad
      V (t,x)
      =
      v^i
      \left(
        t, x,
        \left(\mathcal{A}^i \left(\rho (t)\right)\right) (x),
        p (t)
      \right).
    \end{displaymath}

  \item The map $p$ is a Caratheodory solution to the ordinary
    differential equation
    \begin{displaymath}
      \dot p = \mathcal{F} (t, p)
      \quad \mbox{ where } \quad
      \mathcal{F} (t,p)
      =
      F\left(t, p, \left(\mathcal{B} \left(\rho (t)\right)\right) (p)\right)\,.
    \end{displaymath}

  \item $\rho(0, x) = \rho_o(x)$ for a.e.~$x \in \reali^d$.

  \item $p(0) = p_{o}$.
  \end{enumerate}
\end{definition}

\noindent Above, for the definition of Kru\v zkov solution we refer
to~\cite[Definition~1]{Kruzkov}. By Caratheodory solution we mean the
solution to the integral equation,
see~\cite[Chapter~2]{bressan-piccoli-book}. Observe that
by~\textbf{(q)}, the function $(0,p)$, respectively $(R,p)$,
solves~\eqref{eq:1} as soon as $\dot p = F (t,p,0)$, respectively
$\dot p = F \left(t,p, \left(\mathcal{B} (R)\right) (p)\right)$.

We are now ready to state the main result of this work, whose proof is
deferred to Section~\ref{sec:technical}.

\begin{theorem}
  \label{thm:main}
  Assume that~\textbf{(v)}, \textbf{(F)},
  \textbf{($\boldsymbol{\mathcal{A}}$)},
  \textbf{($\boldsymbol{\mathcal{B}}$)} and~\textbf{(q)} hold.  Then,
  there exists a process
  \begin{displaymath}
    \mathcal{P} \colon
    \{(t_1,t_2) \colon t_2 \geq t_1 \geq 0\}
    \times
    (\L1 \cap \BV) (\reali^d; [0,R]^n) \times \reali^m
    \to
    (\L1 \cap \BV) (\reali^d; [0,R]^n) \times \reali^m
  \end{displaymath}
  such that
  \begin{enumerate}
  \item for all $t_1,t_2,t_3 \in\reali^+$ with $t_3 \geq t_2 \geq
    t_1$, $\mathcal{P}_{t_2,t_3} \circ \mathcal{P}_{t_1,t_2} =
    \mathcal{P}_{t_1,t_3}$ and $\mathcal{P}_{t,t}$ is the identity for
    all $t \in \reali^+$.
  \item For all $(\rho_o,p_o) \in (\L1 \cap \BV) (\reali^d; [0,R]^n)
    \times \reali^m$, the continuous map $t \mapsto
    \mathcal{P}_{t_o,t} (\rho_o, p_o)$, defined for $t \geq t_o$, is
    the unique solution to~\eqref{eq:1} in the sense of
    Definition~\ref{def:solution-1} with initial datum $(\rho_o, p_o)$
    assigned at time $t_o$.

  \item For any pair $(\rho_o^1,p_o^1), (\rho_o^2,p_o^2) \in (\L1\cap
    \BV) (\reali^d; [0,R]^n) \times \reali^m$, there exists a function
    $\mathcal{L} \in \C0 (\reali^+; \reali^+)$ such that $\mathcal{L}
    (0)=0$ and, setting $(\rho_i,p_i) (t) = \mathcal{P}_{0,t}
    (\rho_o^i,p_o^i)$,
    \begin{eqnarray*}
      \norma{\rho_1 (t) - \rho_2 (t)}_{\L1}
      & \leq &
      \left(1+\mathcal{L} (t)\right) \, \norma{\rho_o^1-\rho_o^2}_{\L1}
      +
      \mathcal{L} (t) \, \norma{p_o^1-p_o^2}_{\reali^m} \,,
      \\
      \norma{p_1 (t) - p_2 (t)}_{\reali^m}
      & \leq &
      \mathcal{L} (t) \, \norma{\rho_o^1-\rho_o^2}_{\L1}
      +
      \left(1+\mathcal{L} (t)\right)\,  \norma{p_o^1-p_o^2}_{\reali^m} \,.
    \end{eqnarray*}

  \item For any $(\rho_o, p_o) \in (\L1\cap \BV) (\reali^d; [0,R]^n)
    \times \reali^m$, if $q_1,q_2$, $v_1,v_2$ and $F_1,F_2$
    satisfy~\textbf{(q)}, \textbf{(v)} and~\textbf{(F)}, then there
    exists a function $\mathcal{K} \in \C0 (\reali^+; \reali^+)$ such
    that $\mathcal{K} (0)=0$ and, calling $(\rho_i, p_i)$ the
    corresponding solutions,
    \begin{eqnarray*}
      \norma{\rho_1 (t) - \rho_2 (t)}_{\L1}
      & \leq &
      \mathcal{K} (t)
      \left(
        \norma{q_1 - q_2}_{\W1\infty}
        +
        \norma{v_1 - v_2}_{\W1\infty}
        +
        \norma{F_1 - F_2}_{\L\infty}
      \right)\,,
      \\
      \norma{p_1 (t) - p_2 (t)}_{\reali^m}
      & \leq &
      \mathcal{K} (t)
      \left(
        \norma{q_1 - q_2}_{\W1\infty}
        +
        \norma{v_1 - v_2}_{\W1\infty}
        +
        \norma{F_1 - F_2}_{\L\infty}
      \right) \,.
    \end{eqnarray*}
  \end{enumerate}
\end{theorem}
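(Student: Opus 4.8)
The plan is to build the process $\mathcal{P}$ by a Banach fixed point argument on a short time interval and then to propagate it to all of $\reali^+$ via uniform a priori bounds together with the process property~(1). The guiding observation is that, once the trajectory feeding the nonlocal operators is frozen, the two lines of~\eqref{eq:1} decouple into, respectively, a scalar Kru\v zkov conservation law with an assigned velocity field and a Caratheodory ordinary differential equation, both of which are classically well posed.

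First I would isolate the conservation law building block. Fixing $\rho \in \C0([0,\tau]; \L1(\reali^d;[0,R]^n))$ and $p \in \C0([0,\tau]; \reali^m)$, the field $V^i(t,x) = v^i\left(t,x, (\mathcal{A}^i(\rho(t)))(x), p(t)\right)$ belongs, by~\textbf{(v)} and~\textbf{($\boldsymbol{\mathcal{A}}$)}, to $\C2 \cap \L\infty$ with exactly the regularity the theory needs. I would then record in the technical section a well posedness statement for $\pt \sigma + \div[q^i(\sigma)\, V^i] = 0$ supplying: the invariance of $[0,R]$ (immediate from $q^i(0) = q^i(R) = 0$, so that the constants $0$ and $R$ are stationary and comparison applies), an a priori bound on $\tv(\sigma(t))$ growing at a rate controlled by the $\C2$ regularity of $V^i$, continuity in $\L1$ in time, and---crucially---the $\L1$ stability estimates of Kuznetsov--Lucier type bounding the distance between two entropy solutions by the difference of the two velocity fields (and of the two fluxes $q^i$), weighted by the total variation of the solution. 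Everything downstream rests on these.

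Next I would set up the map $\mathcal{T}(\rho, p) = (\tilde\rho, \tilde p)$, where $\tilde\rho^i$ is the Kru\v zkov solution of the conservation law above with frozen field $V^i$ and datum $\rho_o^i$, while $\tilde p$ solves $\dot{\tilde p} = F(t, \tilde p, (\mathcal{B}(\rho(t)))(\tilde p))$ with datum $p_o$; the latter is a genuine Caratheodory problem because $\mathcal{B}(\rho(t)) \in \W1\infty$ is Lipschitz in $p$, so that~\textbf{(F)} yields a unique solution. Subtracting two inputs, the conservation law stability bound controls $\norma{\tilde\rho_1 - \tilde\rho_2}_{\L1}$ with the velocity difference estimated through $L_A \norma{\rho_1 - \rho_2}_{\L1}$ and $\norma{p_1 - p_2}_{\reali^m}$, whereas a Gronwall estimate on the ODE controls $\norma{\tilde p_1 - \tilde p_2}_{\reali^m}$ through $L_B \norma{\rho_1 - \rho_2}_{\L1}$. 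All coefficients carry a factor vanishing as $\tau \to 0$, so for $\tau$ small $\mathcal{T}$ is a contraction, and its fixed point is the local solution in the sense of Definition~\ref{def:solution-1}; uniqueness on the whole interval follows from the same difference estimates and Gronwall, giving~(2).

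Finally I would globalize and close the four assertions. The bound $0 \leq \rho \leq R$ is uniform, the $\L1$ norm stays bounded, and~\textbf{(F)} gives a Gronwall bound on $\norma{p(t)}_{\reali^m}$; these uniform bounds make $\tau$ selectable independently of the base time, so iterating the local construction and gluing produces $\mathcal{P}$ on all of $\reali^+$ with property~(1). Re-running the difference estimates with distinct initial data as a coupled Gronwall system in $a(t) = \norma{\rho_1(t) - \rho_2(t)}_{\L1}$ and $b(t) = \norma{p_1(t) - p_2(t)}_{\reali^m}$, whose cross terms are integrated from $0$, yields~(3): the like-for-like dependence appears with coefficient $1 + \mathcal{L}(t)$ and the cross dependence with $\mathcal{L}(t)$, $\mathcal{L}(0) = 0$, precisely because the coupling accumulates only for $t > 0$. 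Assertion~(4) is identical, feeding the flux and velocity stability of the Kru\v zkov block (hence the $\W1\infty$ norms of $q_1 - q_2$ and $v_1 - v_2$) and the continuous dependence of the Caratheodory solution on $F$ (the $\L\infty$ norm of $F_1 - F_2$) into the same scheme. I expect the main obstacle to be the first block: securing the finite, time-local total variation bound and the sharp velocity-dependence estimate for the conservation law, since the contraction, the coupling, and all of~(3)--(4) are only as good as the constant multiplying $\tv(\rho(t))$ there.
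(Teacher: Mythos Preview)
Your proposal is the paper's own strategy: a Banach fixed point on $\C0([0,T]; B_\rho \times B_p)$ for the map $\mathcal{T}$ that freezes the trajectory inside $\mathcal{A}^i$ and $\mathcal{B}$, backed by a Kru\v zkov well-posedness/stability block (the paper packages this as Lemma~\ref{lem:stab-1}, invoking the sharp $\L1$--$\tv$ estimates of Lecureux-Mercier) and a Caratheodory ODE block (Lemma~\ref{lem:uniform-p-estimate}), with Gr\"onwall closing~(3) and~(4) exactly as you describe.

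One point to tighten is the globalization. You argue that the uniform bounds $0\le\rho\le R$, $\norma{\rho}_{\L1}$ conserved, and the Gr\"onwall bound on $\norma{p}_{\reali^m}$ make the local time $\tau$ selectable independently of the base time; but $\tau$ also depends on $\tv(\rho_o)$, which enters the ``$\mathcal{T}$ well defined'' estimate through~\eqref{eq:estimate-in-t}, and the $\BV$ bound~\eqref{eq:stab-1-BV-est} grows exponentially, so successive local intervals could a priori shrink summably. The paper avoids this by a contradiction argument rather than uniform iteration: assume $T_*<\infty$, use the $\BV$ bound on $[0,T_*[$ to show $t\mapsto\rho(t)$ is Lipschitz in $\L1$ and $t\mapsto p(t)$ uniformly continuous, extend by continuity to $t=T_*$, and restart \emph{once} from $(\rho(T_*),p(T_*))\in(\L1\cap\BV)\times\reali^m$. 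This is the only place your outline needs adjustment; everything else matches.
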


\noindent As soon as the initial datum for~\eqref{eq:1} has compact
support, it is possible to avoid the requirement~\textbf{(v.2)} in the
assumptions of Theorem~\ref{thm:main}.

\begin{corollary}
  \label{cor:zero}
  Assume that~\textbf{(v.1)}, \textbf{(F)},
  \textbf{($\boldsymbol{\mathcal{A}}$)},
  \textbf{($\boldsymbol{\mathcal{B}}$)} and~\textbf{(q)} hold. For any
  positive $T$ and for any initial datum $(\rho_o, p_o) \in (\L1 \cap
  \BV) (\reali^d; [0,R]^n) \times \reali^m$ such that $\spt \rho_o$ is
  compact, there exists a function $\tilde v$ satisfying~\textbf{(v)}
  such that the solution $t \to \left(\rho (t), p (t)\right)$
  constructed in Theorem~\ref{thm:main} to
  \begin{equation}
    \label{eq:1tilde}
    \left\{
      \begin{array}{l}
        \displaystyle
        \partial_t \rho^i
        +
        \div
        \left[
          q^i(\rho^i) \;
          \tilde v^i \!\! \left(
            t, x, \left(\mathcal{A}^i \left(\rho (t)\right)\right) (x), p
          \right)
        \right]
        =
        0,
        \\[10pt]
        \displaystyle
        \dot p = F\left(t,p,\left(\mathcal{B} \left(\rho (t)\right)\right) (p)\right),
      \end{array}
    \right.
  \end{equation}
  with initial datum $(\rho_o, p_o)$ also solves~\eqref{eq:1} in the
  sense of Definition~\ref{def:solution-1} for $t \in
  [0,T]$. Moreover, $\spt\rho (t)$ is compact for all $t \in [0,T]$.
\end{corollary}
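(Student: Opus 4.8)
The plan is to combine the finite speed of propagation for the density with a cut-off of the velocity performed far away from the compact region actually visited by the solution. I first fix the geometry a priori. Let $R_o>0$ be such that $\spt\rho_o\subseteq B(0,R_o)$, and set $\Lambda = \max_i \norma{(q^i)'}_{\L\infty([0,R])}\,\norma{v^i}_{\L\infty}$ and $R_T = R_o + \Lambda\,T$. Any velocity of the form $\hat v^i = \zeta\,v^i$ with $\modulo\zeta\le 1$ satisfies $\norma{\hat v^i}_{\L\infty}\le\norma{v^i}_{\L\infty}$, so $\Lambda$ bounds the maximal characteristic speed $\norma{(q^i)'V}_{\L\infty}$ of the associated flux; comparing the corresponding Kru\v zkov solution with the null solution, which is admissible thanks to $q^i(0)=0$, yields by the usual $\L1$–comparison in cones that $\spt\rho(t)\subseteq B(0,R_T)$ for every $t\in[0,T]$. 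Using~\textbf{($\boldsymbol{\mathcal A}$)} together with $\rho(t)\in[0,R]^n$ and this support bound, the nonlocal argument is then controlled a priori: $\norma{(\mathcal A^i(\rho(t)))(x)}\le L_A\,\norma{\rho(t)}_{\L1}\le L_A\,R\,\modulo{B(0,R_T)} =: M_A$, uniformly in $t\in[0,T]$ and $x$. Crucially, both $R_T$ and $M_A$ depend only on the fixed data and not on the cut-off introduced next.

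Next I construct $\tilde v$. Fix $\chi\in\Cc\infty(\reali^d;[0,1])$ with $\chi\equiv 1$ on $B(0,R_T+1)$ and $\spt\chi\subseteq B(0,R_T+2)$, and $\psi\in\Cc\infty(\reali^d;[0,1])$ with $\psi\equiv 1$ on $B(0,M_A)$ and $\spt\psi\subseteq B(0,M_A+1)$, and set $\tilde v^i(t,x,A,p) = \chi(x)\,\psi(A)\,v^i(t,x,A,p)$. Then $\tilde v$ inherits~\textbf{(v.1)} from $v$, since $\modulo\chi,\modulo\psi\le 1$ and the product of a $\C2\cap\L\infty$ function with a compactly supported $\C\infty$ factor is again in $\C2\cap\L\infty$. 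Moreover $\tilde v$ now satisfies~\textbf{(v.2)}, which was the only missing hypothesis: by the Leibniz rule, each derivative occurring in~\textbf{(v.2)} is a finite sum of products of derivatives of $\chi,\psi$ and derivatives of $v^i$ up to order two, hence is supported in the fixed compact set $\spt\chi\times\spt\psi$ in the $(x,A)$ variables and is bounded there, for $p$ in any compact $K$, by continuity of the $\C2$ map $v^i$. One may thus take $\mathcal C_K$ to be a suitable constant multiple of the indicator of $B(0,R_T+2)$ plus a strictly positive integrable tail, which lies in $(\L1\cap\L\infty)(\reali^d;\reali^+)$ and makes every bound strict.

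By Theorem~\ref{thm:main} applied to~\eqref{eq:1tilde} with the now admissible velocity $\tilde v$, there is a unique solution $(\rho,p)$, with $\rho(t)\in[0,R]^n$. The a priori estimates of the first paragraph apply to this very solution: $\spt\rho(t)\subseteq B(0,R_T)$ for $t\in[0,T]$, which is exactly the asserted compactness of the support, and $\norma{(\mathcal A^i(\rho(t)))(x)}\le M_A$, so that $\psi$ equals $1$ along the solution. Consequently, on $\spt\rho(t)$ one has $\chi(x)=1$ and $\psi\big((\mathcal A^i(\rho(t)))(x)\big)=1$, so the two velocity fields coincide there, and since $q^i(0)=0$ the two fluxes $q^i(\rho^i)\,\tilde V$ and $q^i(\rho^i)\,V$ agree on all of $\reali^d$.

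It remains to upgrade this agreement of fluxes into equality of the two Kru\v zkov formulations, and this is the only delicate point. Writing the Kru\v zkov entropy inequality for the flux $q^i(\cdot)\,V$ and subtracting the one for $q^i(\cdot)\,\tilde V$, the difference is an integral supported on $\{\tilde V\neq V\}\cup\{\nabla_x(\tilde V-V)\neq 0\}\subseteq\reali^d\setminus B(0,R_T)$, where $\rho^i$ vanishes. Setting $\rho^i=0$ and using $q^i(0)=0$, for every constant $k$ the two surviving contributions (the convective term with $\nabla\phi$ and the $x$-dependence term carrying $\div V$) recombine into $\pm\,q^i(k)\,\div\big[(\chi-1)\,V\,\phi\big]$, a perfect spatial divergence which integrates to zero against any compactly supported test function $\phi$. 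Hence the two entropy inequalities are equivalent and $\rho^i$ is a Kru\v zkov solution of~\eqref{eq:1} as well; the ordinary differential equation being identical in~\eqref{eq:1} and~\eqref{eq:1tilde} and $\rho$ being unchanged, $p$ solves it too, and the initial data are met, so $(\rho,p)$ solves~\eqref{eq:1} on $[0,T]$. I expect this final step to be the main obstacle: a cut-off made only where the density vanishes could in principle spoil the entropy inequalities for $k\neq 0$, and it is precisely the divergence structure granted by $q^i(0)=0$ that rescues it.
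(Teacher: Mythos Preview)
Your proof is correct and follows the same cut-off strategy as the paper: build $\tilde v$ by multiplying $v$ with smooth compactly supported factors in the arguments that are controlled a priori, apply Theorem~\ref{thm:main}, and then observe that $\tilde v$ coincides with $v$ along the solution. The only differences are cosmetic---the paper also cuts off in $p$ via the bound~\eqref{eq:uniform-p-estimate} (unnecessary, since \textbf{(v.2)} already localizes $p$ to a compact $K$), and the paper simply asserts in one line that the solution of~\eqref{eq:1tilde} solves~\eqref{eq:1}, whereas your divergence-structure argument for the entropy inequalities with $k\neq 0$ makes that passage fully explicit.
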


\noindent The detailed proof is in Section~\ref{sec:technical}.
%
%
%
%
\section{Numerical Integrations}
\label{sec:EX}

In this section we present sample applications of system~\eqref{eq:1}
that fit into the framework of Theorem~\ref{thm:main} or
Corollary~\ref{cor:zero}. To show qualitative features of the
solutions, we numerically integrate~\eqref{eq:1}. More precisely, the
ODE is solved by means of the explicit forward Euler method, while for
the PDE we use a FORCE scheme on a triangular mesh,
see~\cite[\S~18.6]{Toro}. We use the same time step according to a CFL number 0.9 and to the
stability bound of the ODE solver.

The coupling is achieved by fractional stepping~\cite[\S~19.5]{LeVeque}.
All numerical integrations are based on the same framework.

\subsection{Guided Groups}
\label{sub:tourist}

We consider two groups of tourists following their own guide. Members
of both groups always aim to stay close to the respective guide, but
also try to avoid too crowded places. In this setting, we have $x \in
\reali^d$ with $d = 2$, $n = 2$ populations $\rho^i (t,x)$ describing
the density of the $i$-th group of tourists, $p = [p^1,p^2]^T \in
\reali^m$ with $m = 4$, where $p^i$ describes the position in
$\reali^2$ of the guide of the $i$-th group. The density $\rho^i$
solves the conservation law
\begin{equation}
  \label{eq:density-tourists}
  \partial_t \rho^i + \nabla_x \cdot \left[\rho^i\left(1 - \rho^i\right)
    \left(w^i(p^i-x) - {\mathcal A}^i(\rho) \right)\right] = 0
\end{equation}
as in~\cite{m3as-cgm, ColomboMagali}, where
\begin{equation}
  \label{eq:A_tourist}
  w^i(\xi) =    \epsilon_i \, \frac{\xi}{
    \sqrt{1 + \norma{\xi}_{\reali^2}^4}}
  \quad \mbox{ and }\quad
  \mathcal{A}^i(\rho)
  =
  \sum_{j=1}^2 \epsilon_{ij}
  \frac{\nabla_x (\rho^j\ast \eta)}{\sqrt{1+\norma{\nabla_x
        (\rho^j\ast\eta)}_{\reali^2}^2}} \,.
\end{equation}
Here, $\epsilon_i$ and $\epsilon_{ij}$ are positive constants and
$\eta \in \Cc2(\reali^2;\reali^+)$. Moreover, $w^i (p^i-x)$ describes
the interaction between the member at $x$ of the $i$-th population and
his/her guide at $p^i$. The $2$ addends in the non local operator
$\mathcal{A}^i$ model the interaction among members of the same
population, the $\epsilon_{ii}$ term, and between the two populations,
the $\epsilon_{ij}$ term.

The leaders $p^1$ and $p^2$ adapt their speed according to the amount
of members of their group nearby. We assume that $p^i$ is constrained
to the circumference of radius $r^i$, centered at the point $c^i =
[c^i_1, c^i_2]^T \in \reali^2$, and its speed depends on an average
density of tourists around its position. Hence, $\ell = 2$ and
\begin{equation}
  \label{eq:p-tourists}
  \left\{
    \begin{array}{rcr}
      \dot p^i_1(t) 
      & = &
      d^i \left(p^i_2(t) - c^i_2\right) (\bar \eta * \rho^i)
      \left(p^i(t)\right),
      \\
      \dot p^i_2(t) 
      & = &
      - d^i \left(p^i_1(t) - c^i_1\right)     (\bar \eta * \rho^i)
      \left(p^i(t)\right),
    \end{array}
  \right.
  \qquad i=1,2 \,,
\end{equation}
where $d^i$ is a real parameter.
System~\eqref{eq:density-tourists}-\eqref{eq:p-tourists} fits
into~\eqref{eq:1} by setting
\begin{equation}
  \label{eq:group-func-def}
  \begin{array}{rcl@{\qquad}rcl}
    q^1(\rho^1) & = & \rho^1(1-\rho^1),
    &
    F_1 (t,p,B) & = & d^1 \, \left(p^1_2 - c^1_2\right) B_1,
    \\
    q^2(\rho^2) & = & \rho^2(1-\rho^2),
    &
    F_2 (t,p,B) & = & - d^1 \, \left(p^1_1 - c^1_1\right) B_1,
    \\
    v^1(t,x,A,p) & = & w^1(p^1-x) - A,
    &
    F_3 (t,p,B) & = & d^2 \, \left(p^2_2 - c^2_2\right) B_2,
    \\
    v^2(t,x,A,p) & = & w^2(p^2-x) - A,
    &
    F_4 (t,p,B) & = & - d^2 \, \left(p^2_1 - c^2_1\right) B_2,
    \\
    \mathcal{A}^1(\rho)
    & = &
    \displaystyle
    \sum_{j=1}^2
    \frac{\epsilon_{1j} \nabla_x (\rho^j\ast \eta)}{\sqrt{1+\norma{\nabla_x
          (\rho^j\ast\eta)}_{\reali^2}^2}},
    &
    \left({\mathcal B}(\rho)\right) (p)
    & = &
    \left([\rho^1 , \rho^2 ]^T \ast \bar \eta\right) (p),
    \\
    \mathcal{A}^2(\rho)
    & = &
    \displaystyle
    \sum_{j=1}^2
    \frac{\epsilon_{2j}\nabla_x (\rho^j\ast \eta)}{\sqrt{1+\norma{\nabla_x
          (\rho^j\ast\eta)}_{\reali^2}^2}},
  \end{array}
\end{equation}
where we write $F = [F_1, F_2, F_3, F_4]^T$.

\begin{proposition}
  \label{prop:Ex1}
  Assume $\eta, \bar \eta \in \Cc2(\reali^2; \reali^+)$. Then, the
  functions defined in~\eqref{eq:group-func-def}
  satisfy~\textbf{(v.1)}, \textbf{(F)},
  \textbf{($\boldsymbol{\mathcal{A}}$)},
  \textbf{($\boldsymbol{\mathcal{B}}$)} and~\textbf{(q)}. In
  particular, Corollary~\ref{cor:zero} applies
  to~\eqref{eq:density-tourists}-\eqref{eq:p-tourists}-\eqref{eq:group-func-def}.
\end{proposition}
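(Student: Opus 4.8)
The task is to verify that the specific functions defined in~\eqref{eq:group-func-def} satisfy the abstract hypotheses~\textbf{(v.1)}, \textbf{(F)}, \textbf{($\boldsymbol{\mathcal{A}}$)}, \textbf{($\boldsymbol{\mathcal{B}}$)} and~\textbf{(q)}, so that Corollary~\ref{cor:zero} is applicable. The proof is therefore a matter of checking each assumption in turn against the explicit formulas; I will organize it as five blocks.

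The plan is to begin with~\textbf{(q)}, which is immediate: the maps $q^i(\rho^i) = \rho^i(1-\rho^i)$ are polynomials, hence $\C2$, and they vanish at $0$ and at $R=1$, so the hypothesis holds with $R=1$. Next I would treat~\textbf{(v.1)} by analyzing $v^i(t,x,A,p) = w^i(p^i-x) - A$. The term $-A$ is smooth and the dependence on $t$ is trivial, so everything reduces to the regularity of $\xi \mapsto w^i(\xi) = \epsilon_i \, \xi / \sqrt{1+\norma{\xi}_{\reali^2}^4}$. This function is $\C\infty$ because the denominator never vanishes, and it is bounded since $\norma{w^i(\xi)}$ behaves like $\epsilon_i \norma{\xi}^{-1}$ for large $\norma{\xi}$ and is continuous at the origin; thus $v^i \in (\C2 \cap \L\infty)$, giving~\textbf{(v.1)}. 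I would note that~\textbf{(v.2)} need not be verified, precisely because Corollary~\ref{cor:zero} only requires~\textbf{(v.1)} for compactly supported initial data.

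For~\textbf{($\boldsymbol{\mathcal{A}}$)} and~\textbf{($\boldsymbol{\mathcal{B}}$)}, the key observation is that convolution with a fixed $\Cc2$ kernel is a smoothing operation. For $\mathcal{A}^i(\rho)$, the map $\rho^j \mapsto \nabla_x(\rho^j \ast \eta) = \rho^j \ast \nabla_x \eta$ is linear and continuous from $\L1$ into $(\C2 \cap \W21)$ with norm controlled by the $\C2$ and $\W21$ norms of $\eta$; composing with the Lipschitz, bounded, $\C\infty$ nonlinearity $z \mapsto z/\sqrt{1+\norma{z}^2}$ yields a Lipschitz map from $\L1$ into $(\C2 \cap \W21)(\reali^2;\reali^2)$, and $\mathcal{A}^i(0)=0$ is clear. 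Analogously, $\mathcal{B}(\rho)(p) = ([\rho^1,\rho^2]^T \ast \bar\eta)(p)$ is convolution with $\bar\eta \in \Cc2$, which maps $\L1$ Lipschitz-continuously into $\W1\infty(\reali^m;\reali^\ell)$ with $\ell=2$ and vanishes at $\rho=0$, giving~\textbf{($\boldsymbol{\mathcal{B}}$)}. The estimates here are the standard Young-type inequalities $\norma{\rho \ast \psi}_{\C k} \leq \norma{\rho}_{\L1}\norma{\psi}_{\C k}$, so the Lipschitz constants $L_A, L_B$ are explicit in terms of $\eta, \bar\eta$.

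Finally, for~\textbf{(F)}, I would observe that each component $F_k(t,p,B)$ is an affine function of $B$ with coefficients that are affine (hence smooth) in $p$ and independent of $t$; measurability and continuity in $p$ and $B$ are therefore trivial. The local Lipschitz estimate~(F.4) follows because, on a compact set $K \ni p_1,p_2$, the coefficients $d^i(p^i_2 - c^i_2)$ are bounded and the map is bilinear in $(p,B)$, so one obtains a bound of the form $L_F(\norma{p_1-p_2}+\norma{B_1-B_2})$ with $L_F$ depending on $K$ and on $\sup_K \norma{B}$; here the boundedness of $B = \mathcal{B}(\rho)(p)$ for $\rho \in \L1(\reali^d;[0,R]^n)$ keeps the relevant $B$-range bounded. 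The sublinear growth bound~(F.5) holds with a constant $C_F$ since $\norma{F(t,p,B)} \leq \O \norma{p}\norma{B}$ is dominated by $\O(1+\norma{p}+\norma{B})$ once one restricts to the bounded range of $B$. The only mildly delicate point, and the one I expect to require the most care, is matching the growth condition~(F.5): it is stated globally in $p$, whereas the bilinear structure gives a product $\norma{p}\norma{B}$; this is reconciled by exploiting that the admissible $B$ values lie in a fixed bounded set determined by~\textbf{($\boldsymbol{\mathcal{B}}$)} and the constraint $\rho \in [0,R]^n$, so the product is controlled linearly as required.
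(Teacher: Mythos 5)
Your proof is correct and follows essentially the same route as the paper's: a direct, hypothesis-by-hypothesis verification with $R=1$, resting on the smoothness and boundedness of the explicit nonlinearities (in particular $z \mapsto z/\sqrt{1+\norma{z}^2}$) and on Young-type convolution estimates such as $\norma{\rho\ast\psi}_{\L\infty} \leq \norma{\rho}_{\L1}\norma{\psi}_{\L\infty}$. The paper's version is far terser --- it declares \textbf{(v.1)}, \textbf{(F)} and \textbf{(q)} immediate and only writes out the $\nu \in \C3$ argument for \textbf{($\boldsymbol{\mathcal{A}}$)} and the $\W1{\infty}$ convolution bound for \textbf{($\boldsymbol{\mathcal{B}}$)} --- so your extra observation that the bilinear structure of $F$ only meets \textbf{(F.4)}--\textbf{(F.5)} after restricting $B$ to its effective bounded range ($\norma{\mathcal{B}(\rho)}_{\L\infty} \leq R\,\norma{\bar\eta}_{\L1}$ for $\rho$ valued in $[0,R]^n$) is a genuine point the paper glosses over.
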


\noindent The proof is deferred to Section~\ref{sec:technical}.

As a specific example we consider the situation identified by the
following parameters
\begin{equation*}
  \begin{array}{rcl@{\qquad}rcl@{\qquad}rcl@{\qquad}rcl}
    \epsilon_1 & = & 0.4, &
    \epsilon_2 & = & 0.4, &
    \epsilon_{11} & = & 0.2, &
    \epsilon_{22} & = & 0.2,
    \\
    \epsilon_{12} & = & 0.8, &
    \epsilon_{21} & = & 0.8, &
    c^1 & = & [2,2]^T, &
    c^2 & = & [2,3]^T,
    \\
    r^1 & = & 1, &
    r^2 & = & 1, &
    d^1 & = & 1, &
    d^2 & = & -1,
  \end{array}
\end{equation*}
and by the functions
\begin{displaymath}
  \begin{array}{@{}r@{\;}c@{\;}l@{\;\mbox{ where }\;}r@{\;}c@{\;}l@{}}
    \eta(x)
    & = &
    \displaystyle
    \frac{\tilde \eta_1(x)}{\int_{\reali^2} \tilde\eta_1(x) \d x},
    &
    \tilde \eta_1 (x)
    & = &
    \left\{
      \begin{array}{ll}
        \exp \left(
          - \frac{20 x_1^2}{1 - 4 x_1^2}
          - \frac{20 x_2^2}{1 - 4 x_2^2}
        \right),
        & x \in [-0.5, \, 0.5]^2,
        \\
        0, & \mbox{otherwise},
      \end{array}
    \right.
    \\
    \bar \eta (x)
    & = &
    \displaystyle
    \frac{\tilde \eta_2(x)}{\int_{\reali^2} \tilde\eta_2(x) \d x},
    &
    \tilde \eta_2 (x)
    & = &
    \left\{
      \begin{array}{ll}
        \left(1 - \left(\frac{5x_1}{2}\right)^2\right)^3
        \left(1 - \left(\frac{5x_2}{2}\right)^2\right)^3,
        & x \in [-0.4, \, 0.4]^2,
        \\
        0, & \mbox{otherwise.}
      \end{array}
    \right.
  \end{array}
\end{displaymath}
The computational domain is $[0,1]^2$ and as initial conditions we
choose
\begin{align*}
  \rho_o^1 = 0.75\, \chi_{\strut[0.5,1.5]\times[0.5,1.5]} \qquad
  \rho_o^2 = \chi_{\strut [2.5,3.5]\times[0.5,1.5]} \qquad p_o^1
  = [2,3]^T, \qquad p_o^2 = [2,2]^T.
\end{align*}
In Figure~\ref{fig:po_int_tourist}, the solution up to $T\approx40$ is
shown.  The densities of the groups are the blue (for $i=1$) and red
(for $i=2$) regions, whereas the guides are located at the dots of the
corresponding color.
\begin{figure}[ht!]
  \centering
  \includegraphics[width=0.45\textwidth, viewport=150 50 1350
  800]{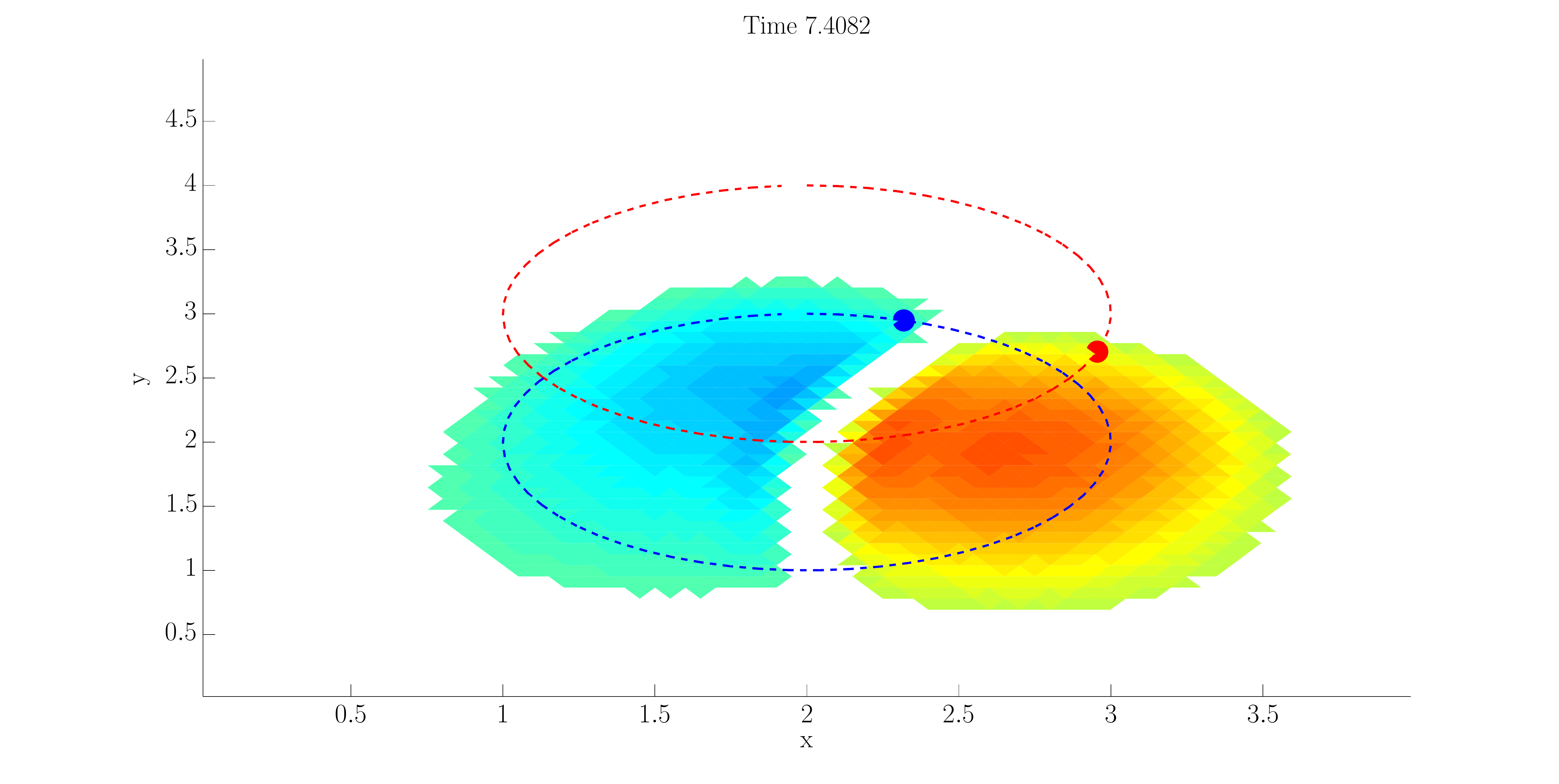}%
  \includegraphics[width=0.45\textwidth, viewport=150 50 1350
  800]{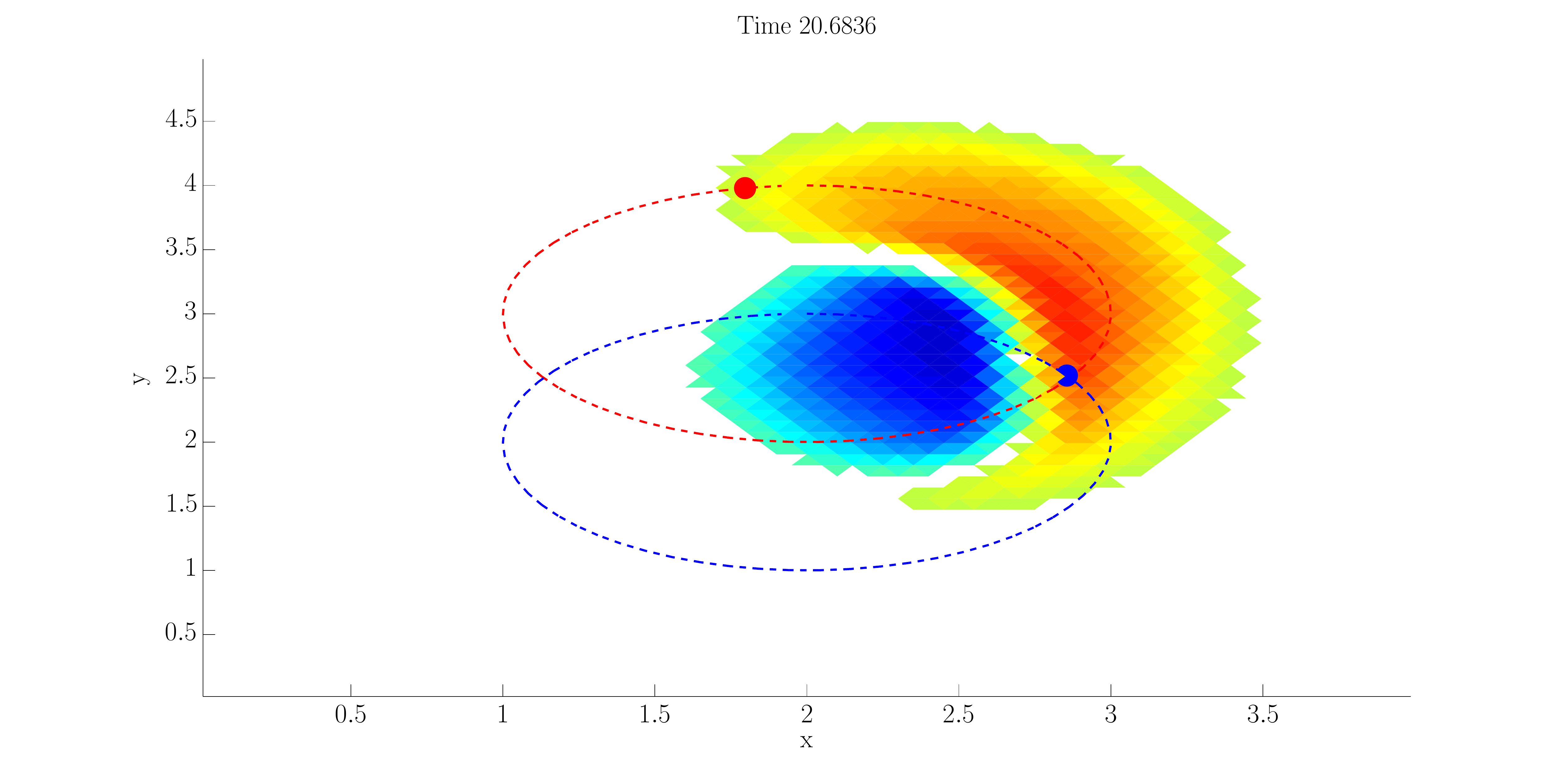}\\
  \includegraphics[width=0.45\textwidth, viewport=150 50 1350
  800]{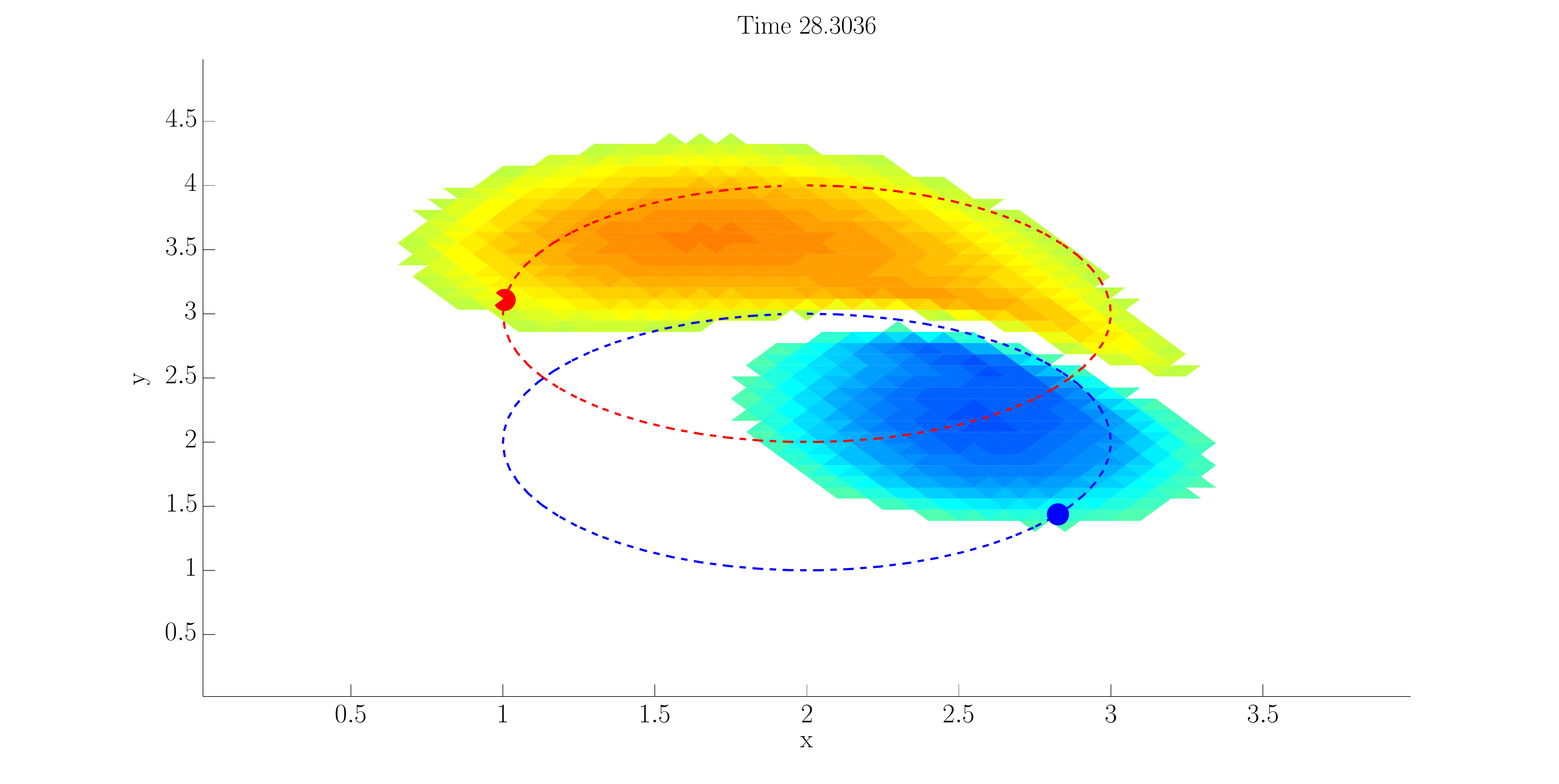}%
  \includegraphics[width=0.45\textwidth, viewport=150 50 1350
  800]{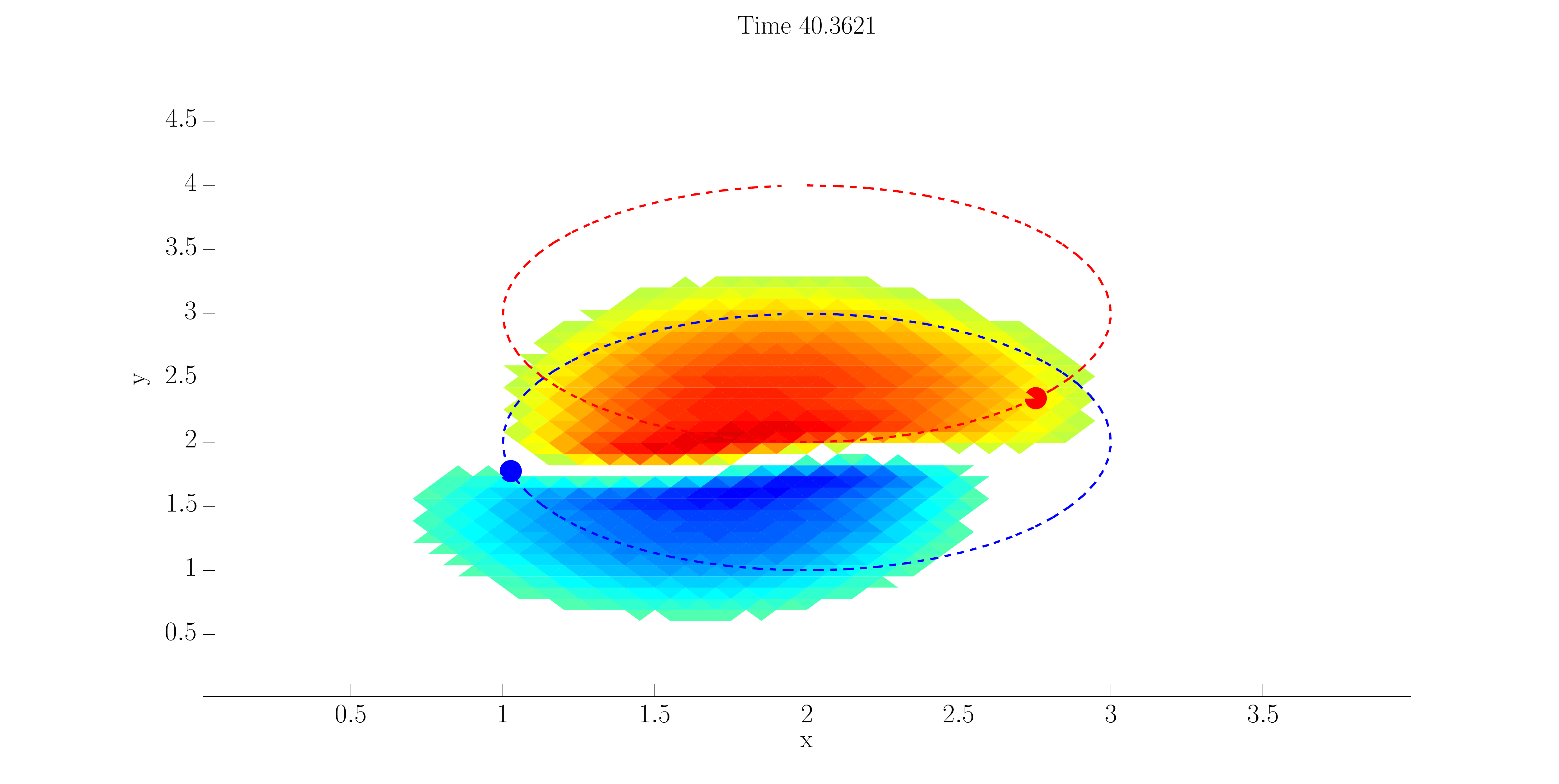}\\
  \caption{Plots of $\max\{\rho_1, \rho_2\}$ on the $(x,y)$ plane,
    where $(\rho_1, \rho_2)$ solve~\eqref{eq:density-tourists}--%
    \eqref{eq:p-tourists}--\eqref{eq:group-func-def}. The circles are
    the fixed trajectories of the guides. The blue refers to $i=1$,
    while the yellow/red to $i=2$. The blue guide moves clockwise, the
    other one counterclockwise. The choice~\eqref{eq:group-func-def}
    prevents the mixing of the two groups.}
  \label{fig:po_int_tourist}
\end{figure}

According to~\eqref{eq:p-tourists}--\eqref{eq:group-func-def}, the
groups walk towards their guides and come into contact at $t \approx
7.4$.  At $t \approx 20.7$, the blue guide is surrounded by the reds
and waits for his group.  Meanwhile, the red group bypasses the blues
and avoid the congestion.  At about $t \approx 28.3$, the groups are
separated, while they meet again at $t \approx 40.4$.

\subsection{Interacting Crowds and Vehicles}
\label{sub:car}

We consider two groups of pedestrians crossing a street at a
crosswalk, following~\cite{Raul1, Raul2}. The people near the
crosswalk reduce their speed and possibly stop if cars are near to the
crosswalk. At the same time, cars slow down and possibly stop as soon
as in front of them pedestrians are present.  The density $\rho^i(t,x)$,
for $i=1 \ldots, n$, describes
the $i$-th group of pedestrians. Each driver's position can thus be
identified through its scalar coordinate $p^k$, for $k = 1 \ldots, N$,
along the road. Without loss of generality, we assume that the road is
parallel to the vector 
$[1, 0]^T$, with width $2h_R$, i.e. $\modulo{x_2 - \bar
  x_2} \leq h_R$. Therefore, we have
\begin{displaymath}
  d = 2\,,\quad
  n = 2\,,\quad
  m = N\,,\quad
  \ell = N \,.
\end{displaymath}

The dynamics of the pedestrians is similar to that introduced
in~\cite{m3as-cgm, ColomboMagali}, namely
\begin{equation}
  \label{eq:Ex2_1}
  \partial_t \rho^i
  +
  \div
  \left[
    2 \rho^i \;
    (1-\rho^i) \;
    w^i (x,p) \, \left(V^i (x) - \mathcal{A}^i (\rho)\right)
  \right]
  =
  0,
  \qquad i=1,2\,.
\end{equation}
Here $w^i \in \C2(\reali^d \times \reali^m; \reali^+)$ describes the
interaction between the member of the $i$-th group located at $x$ and
the cars. $\mathcal{A}^i$ is chosen as in~\eqref{eq:A_tourist} and
models the interactions of pedestrians.  Finally, the vector field
${V}^i \in \C2(\reali^d; \reali^d)$ stands for the preferred
trajectories of the people.

The dynamics of cars along the road is described by the Follow The
Leader model
\begin{equation}
  \label{eq:ftl}
  \left\{
    \begin{array}{rcl}
      \dot{p}^k
      & = &
      g\!\left(\left(\mathcal{B} (\rho)\right) (p^k)\right) \;
      u(p^{k+1} - p^{k}),
      \qquad k=1, \ldots, m-1,
      \\
      \dot p^m
      & = &
      v_L (t),
    \end{array}
  \right.
\end{equation}
where the non increasing function $g \in \C2 (\reali; [0,1])$
describes the slowing of cars when near to pedestrians while the non
decreasing function $u \in \C2 (\reali; [0,1])$ vanishes on $\reali^-$
and describes the usual drivers' behavior in Follow The Leader
models. The assigned function $v_L = v_L (t)$ is the speed of the
\emph{leader}, i.e., of the first vehicle. For simplicity, we assume
that the initial position of the first car is after the crosswalk so
that its subsequent dynamics is independent from the crowds.

The present model fits into the framework presented in
Section~\ref{sec:AR} by setting
\begin{equation}
  \label{eq:cars-func-def-2}
  \begin{array}{@{}r@{\,}c@{\,}lr@{\,}c@{\,}l@{}}
    q^1(\rho^1) & = & 2\rho^1(1-\rho^1),
    &
    q^2(\rho^2) & = & 2\rho^2(1-\rho^2),
    \\
    v^1(t,x,A,p) & = & w^1(x,p) \left(V^1(x) - A\right),
    &
    v^2(t,x,A,p) & = & w^2(x,p) \left(V^2(x) - A\right),
    \\
    \mathcal{A}^1(\rho)
    & = &
    \displaystyle
    \sum_{j=1}^2
    \frac{\epsilon_{1j} \, \nabla_x (\rho^j\ast \eta)}{\sqrt{1+\norma{\nabla_x
          (\rho^j\ast\eta)}_{\reali^2}^2}},
    &
    \mathcal{A}^2(\rho)
    & = &
    \displaystyle
    \sum_{j=1}^2
    \frac{\epsilon_{2j} \, \nabla_x (\rho^j\ast \eta)}{\sqrt{1+\norma{\nabla_x
          (\rho^j\ast\eta)}_{\reali^2}^2}},
  \end{array}
\end{equation}
\begin{equation}
  \label{eq:cars-func-def-2bis}
  \begin{array}{@{}rcl@{}}
    F_k (t,p,B)
    & = &
    \left\{
      \begin{array}{l@{\qquad\qquad}r@{\;}c@{\;}l@{}}
        g (B) \; u (p_1^{k+1} - p_1^k),
        & k & = & 1, \ldots, m-1,
        \\
        v_L (t),
        & k & = & m,
      \end{array}
    \right.
    \\
    \left(\mathcal{B} (\rho)\right) (p^k)
    & = &
    \displaystyle
    \int_{\reali^2}
    \left(\rho^1 (x) + \rho^2 (x)\right) \;
    \bar\eta \! \left(x - \left[
        \begin{array}{@{}c@{}}
          p^k\\ \bar x_2
        \end{array}
      \right]\right)
    \, \d{x},
    \quad
    k=1, \ldots, m\,.\!\!\!\!
  \end{array}
\end{equation}

\begin{proposition}
  \label{prop:Ex2}
  Assume $\eta, \bar\eta \in \Cc2(\reali^2; \reali^+)$, $w^i \in
  \C2(\reali^2 \times \reali^{2N}; \reali^+)$, $V^i \in
  \C2\left(\reali^2; \reali^2\right)$, $v_L \in \L1 (\reali^+;
  \reali^+)$ and $g,u \in \C2\left(\reali;[0,1]\right)$. Then, the
  functions defined
  in~\eqref{eq:cars-func-def-2}--\eqref{eq:cars-func-def-2bis} satisfy
  \textbf{(v.1)}, \textbf{(F)}, ($\boldsymbol{\mathcal{A}}$),
  ($\boldsymbol{\mathcal{B}}$) and \textbf{(q)}. In particular,
  Corollary~\ref{cor:zero} applies to~\eqref{eq:Ex2_1}-\eqref{eq:ftl}.
\end{proposition}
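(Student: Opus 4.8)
The plan is a direct, term-by-term verification that the explicit choices in~\eqref{eq:cars-func-def-2}--\eqref{eq:cars-func-def-2bis} satisfy each of the five structural assumptions. Since $q^i$ and the operators $\mathcal{A}^i$ have exactly the form already treated in Proposition~\ref{prop:Ex1}, I would recycle those computations and concentrate on the genuinely new ingredients: the velocities $v^i = w^i\,(V^i-A)$, the Follow-The-Leader field $F$, and the operator $\mathcal{B}$.

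First I would dispatch the routine assumptions. \textbf{(q)} is immediate: $q^i(\rho)=2\rho(1-\rho)$ is a polynomial, hence $\C2$, it vanishes at $\rho=0$ and at $\rho=R=1$ and is nonnegative on $[0,1]$. For \textbf{($\boldsymbol{\mathcal{A}}$)} I would argue as in~\cite{m3as-cgm, ColomboMagali}: the linear map $\rho^j\mapsto\nabla_x(\rho^j\ast\eta)=\rho^j\ast\nabla_x\eta$ smooths $\L1$ into the target space because $\eta\in\Cc2$, while the renormalization $z\mapsto z/\sqrt{1+\norma{z}_{\reali^2}^2}$ is smooth, bounded, globally Lipschitz with bounded derivatives and vanishes at the origin; composing the two and summing over $j$ gives a Lipschitz operator with $\mathcal{A}^i(0)=0$. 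For \textbf{($\boldsymbol{\mathcal{B}}$)}, each component $\left(\mathcal{B}(\rho)\right)(p^k)$ is the convolution $(\rho^1+\rho^2)\ast\bar\eta$ evaluated at $(p^k,\bar x_2)$ with the fixed kernel $\bar\eta\in\Cc2$, so the map is linear in $\rho$ and Young's inequality yields both the $\W1\infty$ regularity in $p$ and the Lipschitz bound $\norma{\mathcal{B}(\rho_1)-\mathcal{B}(\rho_2)}_{\W1\infty}\le\norma{\bar\eta}_{\W1\infty}\,\norma{\rho_1-\rho_2}_{\L1}$, with $\mathcal{B}(0)=0$.

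Next I would turn to \textbf{(F)}, checking the Caratheodory conditions in order. Continuity in $p$ follows from $u\in\C2$, continuity in $b$ from $g\in\C2$, and measurability in $t$ is trivial since the only $t$-dependence is $F_m=v_L(t)$ with $v_L\in\L1$. The growth bound is the cleanest point: because $0\le g,u\le1$, one has $\norma{F(t,p,b)}_{\reali^m}\le\sqrt{m-1}+\modulo{v_L(t)}$, so $C_F(t)=\sqrt{m-1}+\modulo{v_L(t)}\in\Lloc1$ does the job. Finally, \textbf{(v.1)} reduces to a product and chain rule computation: $v^i=w^i(x,p)\bigl(V^i(x)-A\bigr)$ is $\C2$ because $w^i$ and $V^i$ are.

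The part deserving real care — and the main obstacle — is the remaining boundedness and Lipschitz content, namely the $\L\infty$ clause of \textbf{(v.1)} and the Lipschitz estimate in \textbf{(F)} that must hold for \emph{all} $b\in\reali^\ell$. Read literally over the full domains these can fail, since $v^i$ grows linearly in $A$ and a merely $\C2$ function such as $g$ need be neither globally bounded in derivative nor globally Lipschitz. The resolution I would push through is that $v^i$ and $F$ are only ever evaluated on the \emph{bounded} ranges of their nonlocal arguments: $\norma{\mathcal{A}^i(\rho)}_{\L\infty}\le\sum_{j}\epsilon_{ij}$ by construction of the renormalization, and $0\le\left(\mathcal{B}(\rho)\right)(p^k)\le 2R\,\norma{\bar\eta}_{\L1}$ whenever $\rho$ is valued in $[0,R]^2$. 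Restricting $A$ and $b$ to these compact sets makes $v^i$ bounded and $g$ Lipschitz there, so \textbf{(v.1)} and the Lipschitz clause of \textbf{(F)} hold after a harmless truncation of $v^i$ and $g$ outside the relevant balls, a modification that leaves the solution unchanged and is precisely of the type supplied by Corollary~\ref{cor:zero}. Once the five assumptions are in place, the concluding claim that Corollary~\ref{cor:zero} applies to~\eqref{eq:Ex2_1}--\eqref{eq:ftl} follows at once from the compact support of the pedestrian data.
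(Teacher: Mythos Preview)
Your proposal is correct and follows essentially the same approach as the paper: a term-by-term check that defers \textbf{($\boldsymbol{\mathcal{A}}$)} to the argument of Proposition~\ref{prop:Ex1} and handles \textbf{($\boldsymbol{\mathcal{B}}$)} via standard convolution estimates. You are in fact more careful than the paper's four-line proof in flagging the $\L\infty$ clause of \textbf{(v.1)} and the global Lipschitz clause of \textbf{(F)}, which the paper simply declares ``immediate'' and ``straightforward''; one small caveat is that Corollary~\ref{cor:zero} as stated only supplies the truncation for $v$, so the analogous cut-off for $g$ (harmless because the range of $\mathcal{B}(\rho)$ is bounded) needs a separate, though equally routine, sentence.
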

\noindent The proof is deferred to Section~\ref{sec:technical}.

As a specific example we consider the spatial domain $\mathcal{D} =
[0,1] \times [0,1]$, with a road occupying the region $\mathcal{R} =
[0,1] \times [0.45, 0.55]$ (so that $\bar x_2 = 0.5$ and $h_R = 0.05$)
and the crosswalk $\mathcal{C} = [0.4, 0.6] \times [0.45,
0.55]$. Therefore, pedestrians may walk in $\mathcal{C} \cup
(\mathcal{D} \setminus \mathcal{R})$, while cars travel along
$\mathcal{R}$ from left to right. The $\rho^1$ population targets the
bottom boundary $[0,1] \times \{0\}$, while $\rho^2$ points towards
the top boundary $[0,1] \times \{1\}$, see
Figure~\ref{fig:po_int_car}. No individual is allowed to cross the
road aside the crosswalk.

The vector $V^1 (x)$, respectively $V^2 (x)$, is chosen with norm $1$
and tangent to the geodesic path at $x$ for the population $1$,
respectively $2$. In general, these vectors can be computed as
solutions to the eikonal equation and their regularity depends on the
geometry of the domain\cite{sethian1999fast}.


First, for $\alpha_1 < \alpha_2$, we introduce the smooth function
$\beta_{\alpha_1, \alpha_2} \in \C\infty (\reali; [0,1])$ defined as\\
\begin{minipage}{0.35\linewidth}
  \includegraphics[width=\textwidth]{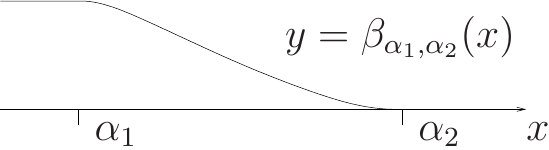}
\end{minipage}%
\begin{minipage}{0.65\linewidth}
  \begin{displaymath}
    \hfill \beta_{\alpha_1, \alpha_2}(z) =
    \left\{
      \begin{array}{@{}lr@{\,}c@{\,}l@{}}
        1, & z & < & \alpha_1,
        \\
        \exp \left[
          1 - \left(1 - \left(
              \frac{z-\alpha_1}{\alpha_2-\alpha_1} \right)^2 \right)^{-1}
        \right],
        & z & \in &[\alpha_1, \alpha_2],
        \\
        0, & z & > & \alpha_2\,.
      \end{array}
    \right.
  \end{displaymath}
\end{minipage}\\
For $i=1,2$ we choose
\begin{eqnarray*}
  w^i(x,p)
  & = &
  1
  -
  \left(1 - \beta_{h_R, h_R + \epsilon_\gamma}(\modulo{x_2 - \bar x_2})\right)
  \!
  \left[
    1 - \prod_{l=1}^3  \beta_{r_i,r_a} \left(\hat{\eta}^i
      \left(x, \left[
          \begin{array}{@{}c@{}}
            p^l\\ \bar x_2
          \end{array}
        \right] \right)\right)
  \right],
  \quad
  \begin{array}{@{}r@{\,}c@{\,}l@{}}
    \bar x_2 & = & 0.5,
    \\
    h_R & = & 0.05,
    \\
    \epsilon_\gamma & = &0.001,
    \\
    r_i & = & 0.1,
    \\
    r_a & = & 0.8,
    \\
  \end{array}
  \\
  \hat{\eta}^i (x,p)
  & = &
  \left\{
    \begin{array}{@{}ll@{}}
      \eta_3 \!\left( (p-x) \cdot V^i(x), r_v\right) \;
      \eta_3 \! \left((p-x) \cdot V^i(x)^\perp, r_v\right),
      & (p-x) \cdot V^i(x)>0,
      \\
      \eta_3 \! \left((p-x) \cdot V^i(x), r_{vb}\right) \;
      \eta_3 \! \left((p-x) \cdot V^i(x)^\perp, r_v\right), \quad
      & \mbox{otherwise,}
    \end{array}
  \right.\\
  \eta_3(z,r) & = &
  \left\{
    \begin{array}{@{}ll@{}}
      \exp \left(- \frac{z^2}{r^2 - z^2}\right),
      & z \in [-r, r],
      \\
      0,
      & \mbox{otherwise,}
    \end{array}
  \right.
  \qquad\qquad\qquad\qquad\qquad
  \begin{array}{@{}r@{\,}c@{\,}l@{}}
    r_v & = & 0.15,
    \\
    r_{vb} & = & 0.0015.
  \end{array}
\end{eqnarray*}
Here, $\hat\eta_i$ describes the region considered by each pedestrian
in reacting to cars. For instance, cars behind a pedestrian are
ignored when at a distance greater than $r_{vb}$, while cars in front
of the pedestrian are considered up to a distance $r_v$. Outside the
interval defined by the threshold parameters $r_i$ and $r_a$, the
pedestrians' sensitivity to cars is amplified.

The convolution kernel in the nonlocal operators $\mathcal{A}^1$ and
$\mathcal{A}^2$ are
\begin{displaymath}
  \tilde \eta_r (x)
  =
  \left\{
    \begin{array}{@{}ll@{}}
      \exp
      \left(
        - \frac{5 {x_1}^2}{r^2 - {x_1}^2} - \frac{5 {x_2}^2}{r^2 - {x_2}^2}
      \right),
      &
      x \in [-r, r]^2,
      \\
      0,
      &
      \mbox{otherwise,}
    \end{array}
  \right.
  \qquad
  \eta (x)
  =
  \frac{\tilde \eta_r (x)}{\int_{\reali^2} \tilde \eta_r (x)\d{x}},
  \quad\qquad
  r = 0.05 \,.
\end{displaymath}
with interaction parameters
\begin{displaymath}
  \epsilon_{11} = 0.1\,, \qquad \epsilon_{22} = 0.1\,, \qquad
  \epsilon_{12} = 0.7\,, \qquad \epsilon_{21} = 0.7 \,.
\end{displaymath}

In the Follow The Leader model, we choose $N=3$ vehicles and let
\begin{displaymath}
  v_L (t)=1 \,, \qquad
  g (B) = \beta_{r_j,r_b} (B),
  \quad \mbox{ and } \quad
  u (\xi) = 1 - \beta_{H, 10H} (\xi)^{K},
  \quad
  \mbox{ with } \quad
  \begin{array}{@{}r@{\,}c@{\,}l@{}}
    r_j & = & 0.125,
    \\
    r_b & = & 0.5,
    \\
    H & = & 0.167,
    \\
    K & = & 50.
  \end{array}
\end{displaymath}
The microscopic model for vehicles is completed by the convolution
kernel in the nonlocal operator $\mathcal{B}$
\begin{displaymath}
  \tilde{\eta_2} (x) =
  \left\{
    \begin{array}{ll}
      \eta_R(x), & x_1 > 0,
      \\
      \eta_{R'}(x_1,0) \; \eta_R(0,x_2),  & \mbox{otherwise,}
    \end{array}
  \right.
  \qquad\quad
  \bar\eta (x)
  =
  \frac{\tilde \eta_2 (x)}{\int_{\reali^2} \tilde \eta_2 (x)\d{x}},
  \qquad\quad
  \begin{array}{@{}r@{\,}c@{\,}l@{}}
    R & = & 0.045,
    \\
    R' & = & 0.0045 \,.
  \end{array}
\end{displaymath}

As initial condition we prescribe
\begin{displaymath}
  \begin{array}{rcl}
    \rho_0^1 & = & \chi_{\strut [0.1,0.9]\times[0.7,0.9]},
    \\
    \rho_0^2 & = & 0.5 \, \chi_{\strut [0.1,0.9]\times[0.1,0.3]},
  \end{array}
  \qquad \mbox{ and } \qquad
  \begin{array}{rcl}
    p_o^1 & = & 0.000,
    \\
    p_o^2 & = & 0.333,
    \\
    p_o^3 & = & 0.667\,.
  \end{array}
\end{displaymath}

In Figure~\ref{fig:po_int_car} the maximal density $\rho_m =
\max(\rho^1,\rho^2)$ of the two groups is shown. The first group is
illustrated in blue and the second one in red.
\begin{figure}[ht!]
  \centering
  \includegraphics*[width=0.5\textwidth, viewport=125 30 1350
  800]{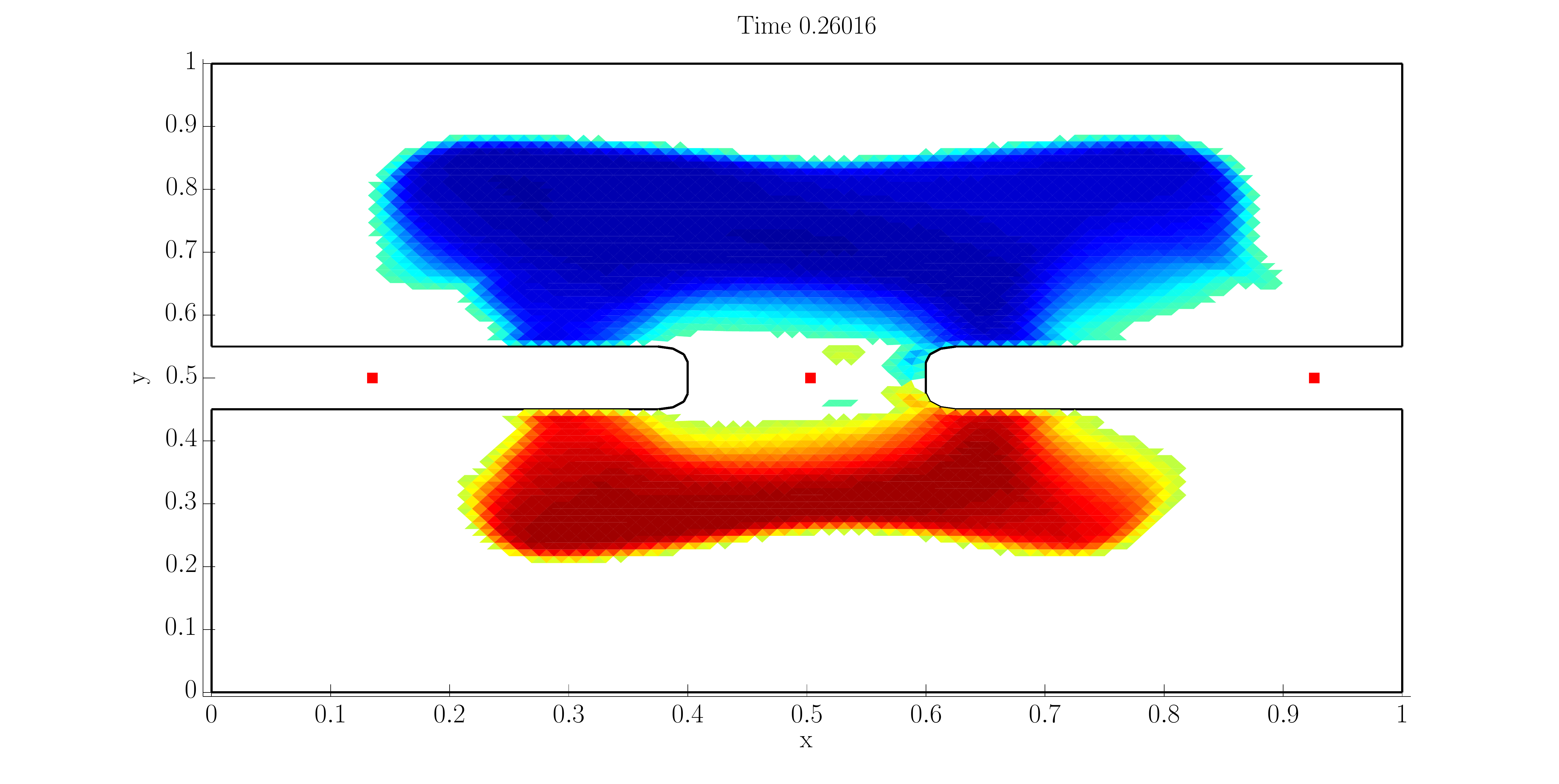}%
  \includegraphics*[width=0.5\textwidth, viewport=125 30 1350
  800]{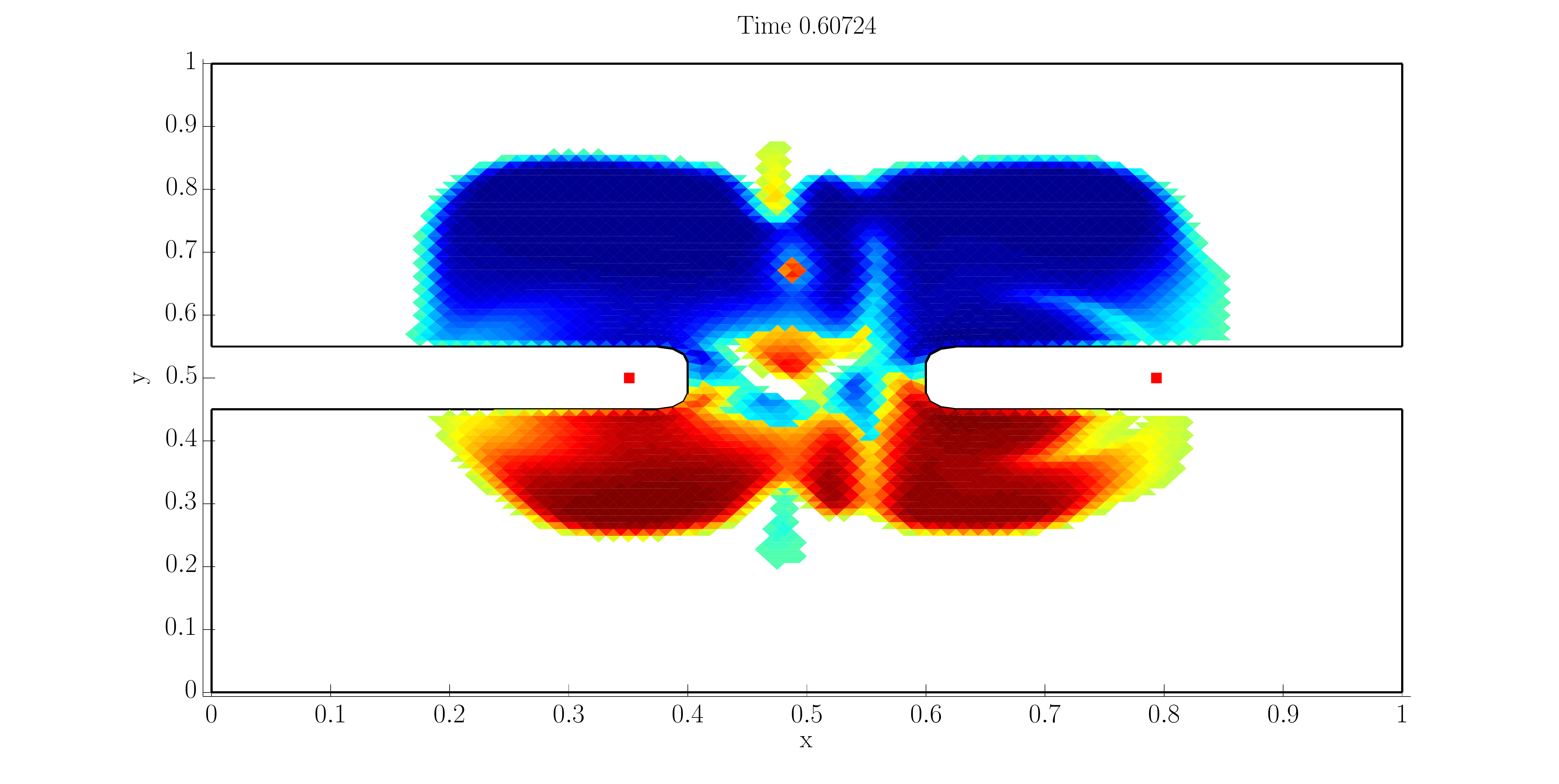}\\
  \includegraphics*[width=0.5\textwidth, viewport=125 30 1350
  800]{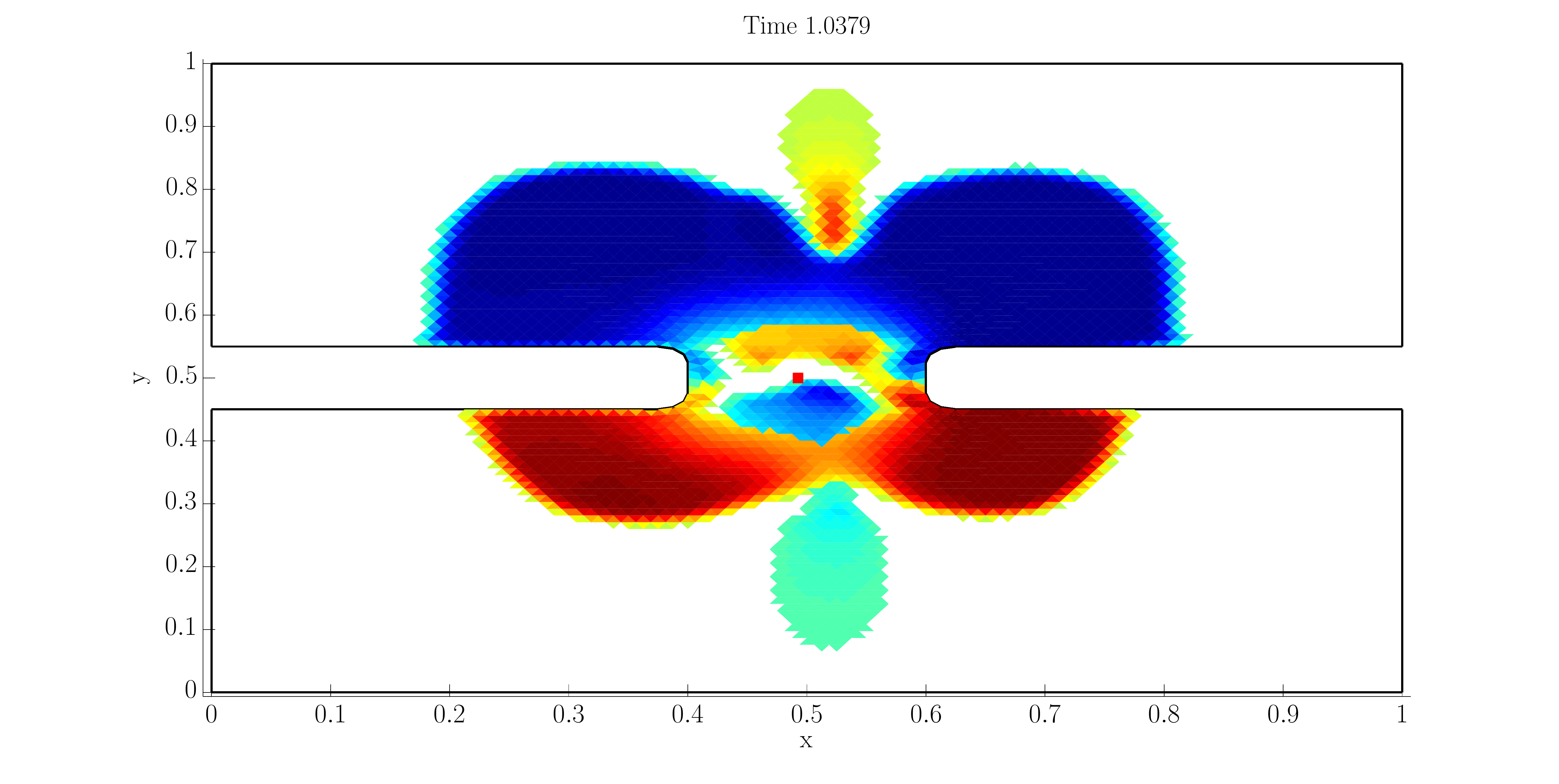}%
  \includegraphics*[width=0.5\textwidth, viewport=125 30 1350
  800]{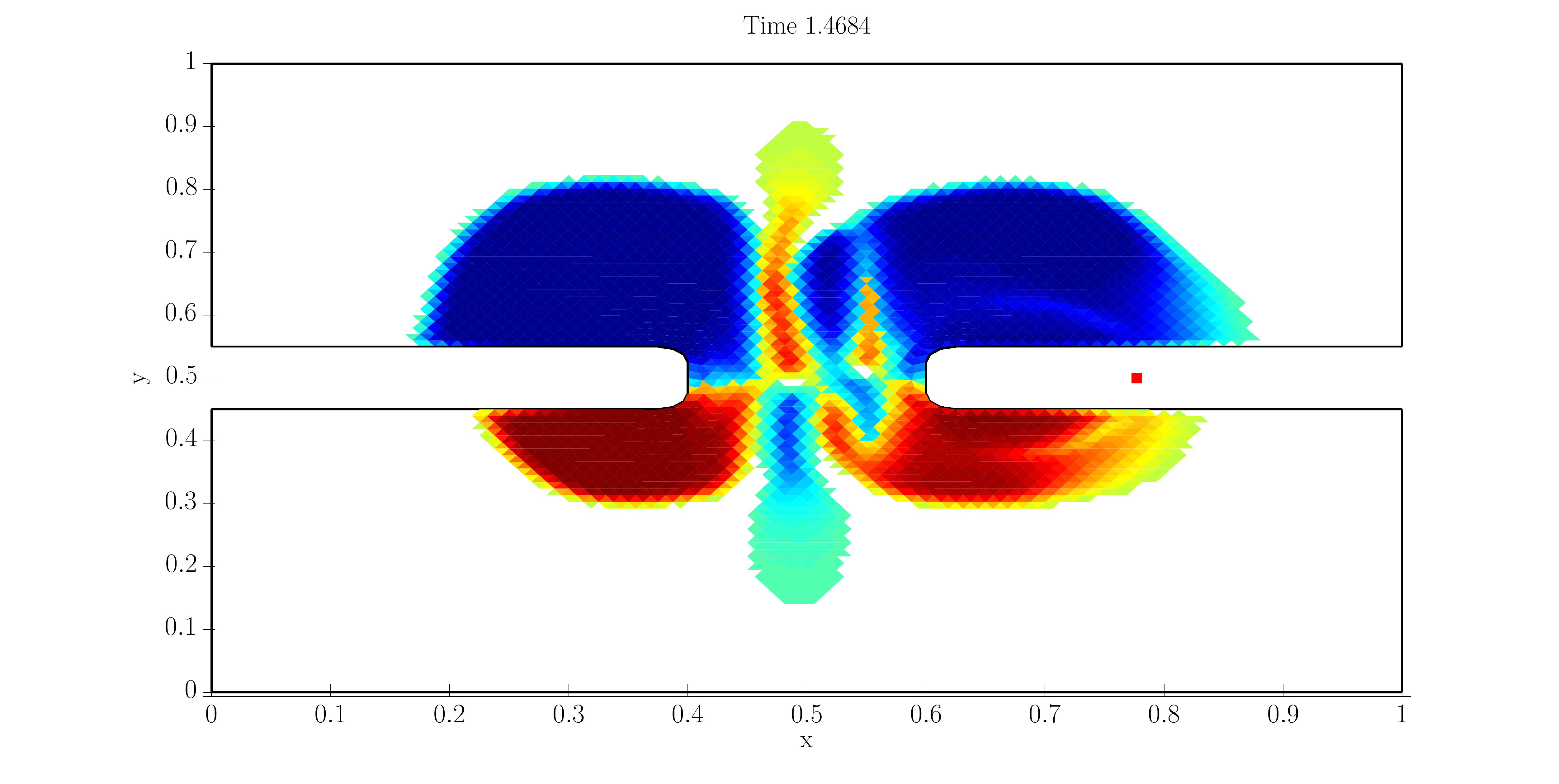}\\
  \caption{Plots of $\max\{\rho_1, \rho_2\}$ on the $(x,y)$
    plane, where $(\rho_1, \rho_2)$ solve~\eqref{eq:Ex2_1}--%
    \eqref{eq:ftl}--\eqref{eq:cars-func-def-2}--%
    \eqref{eq:cars-func-def-2bis}. The blue population $\rho_1$ moves
    downward; the red one, $\rho_2$. upward. Cars are represented by
    the red dots along the road. Above: left, pedestrians wait until
    the second car has passed the crosswalk; right, pedestrians cross
    the road and form lanes. Bottom: left, pedestrians wait until the
    third car has passed the crosswalk; right, pedestrians cross the
    road and form lanes.}
  \label{fig:po_int_car}
\end{figure}
\begin{figure}[ht!]
  \centering
  \includegraphics*[width=0.5\textwidth, viewport=125 30 1350
  800]{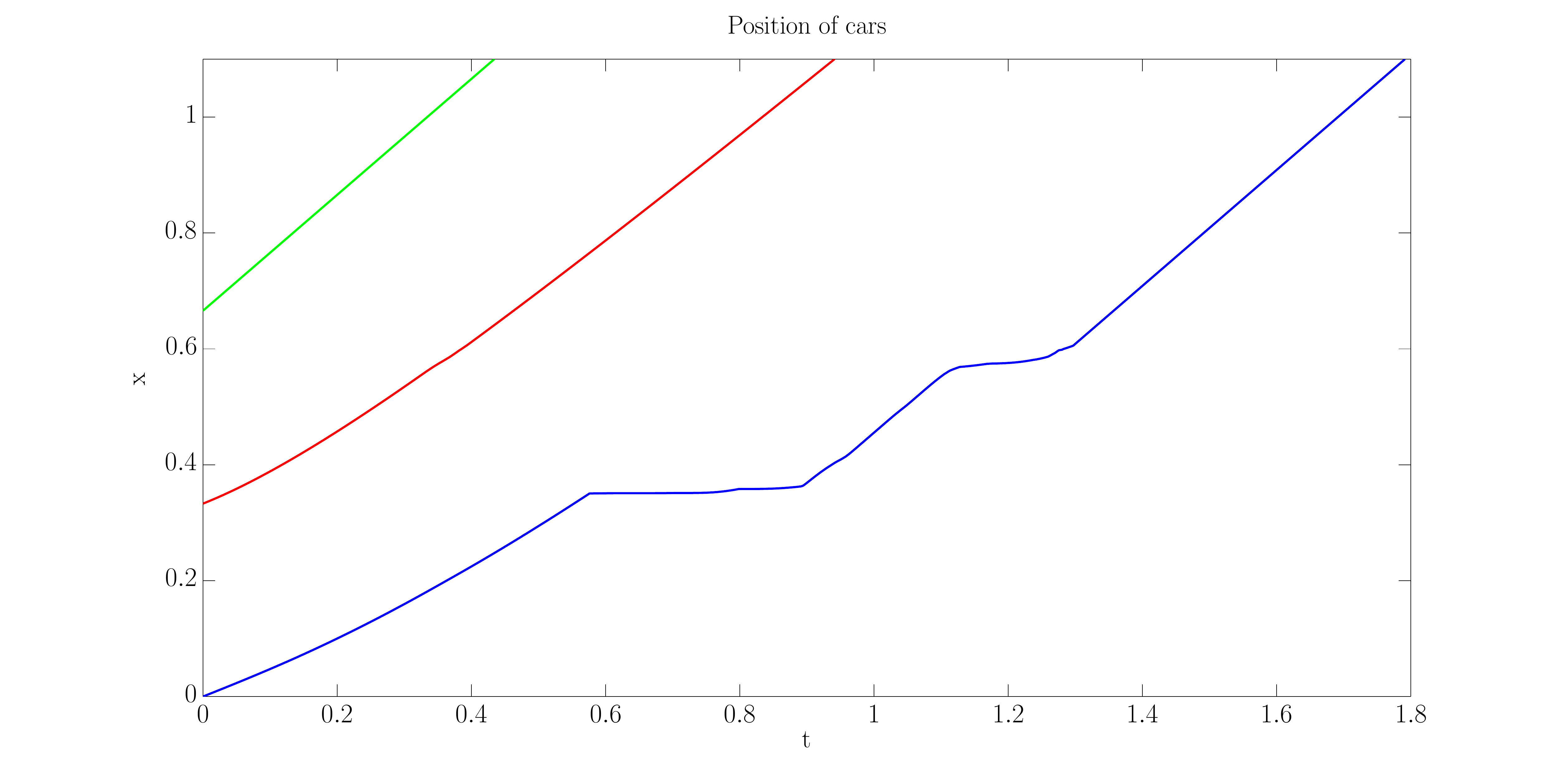}%
  \includegraphics*[width=0.5\textwidth, viewport=125 30 1350
  800]{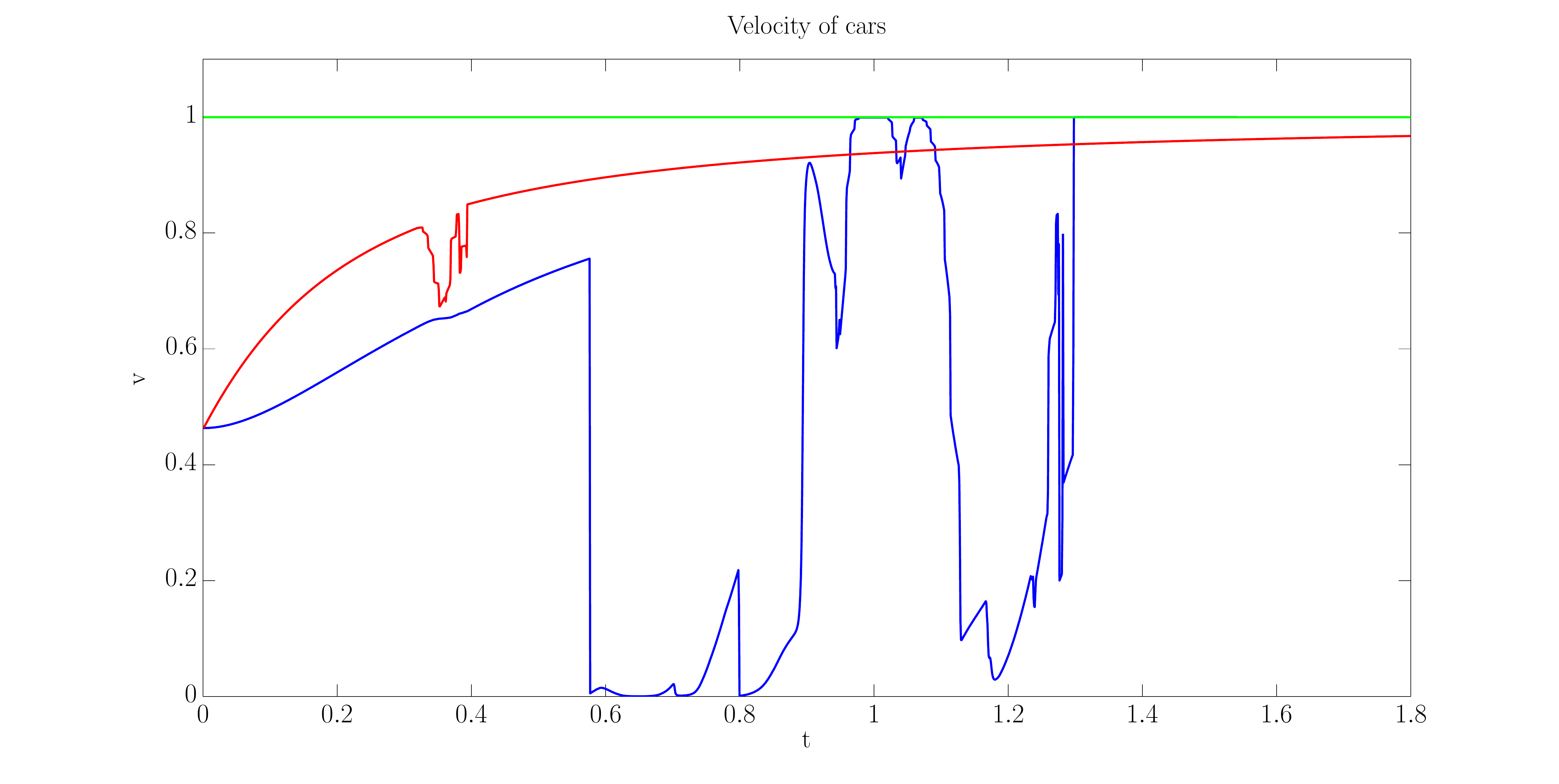}\\
  \caption{Positions, left, and velocities, right, of the cars in the
    solution to~\eqref{eq:Ex2_1}--%
    \eqref{eq:ftl}--\eqref{eq:cars-func-def-2}--%
    \eqref{eq:cars-func-def-2bis} as a function of time, on the
    horizontal axes. The green lines refer to $p_3$, the red ones to
    $p_2$ and the blue one to $p_1$.}
  \label{fig:po_int_car_v}
\end{figure}
Initially the pedestrians start walking towards the crosswalk and the
cars can drive freely. 
The first car has maximal speed $1$ and the other ones adapt their
speed according to the distance to their leading car, see
Figure~\ref{fig:po_int_car_v}. At time $t\approx 0.2$ the second car
is in the middle of the crosswalk and only few pedestrians try to
cross the road (Figure~\ref{fig:po_int_car}, top left). When the car
has left the crosswalk, the pedestrians start walking and form lanes
in order to pass through the other group (Figure~\ref{fig:po_int_car},
top right).  When the third car approaches the crosswalk, the
pedestrians in front of the crosswalk stop while those on the road can
continue their way (Figure~\ref{fig:po_int_car}, bottom left). As the
street is not cleared immediately, the car almost has to stop (see
Figure~\ref{fig:po_int_car_v}, left). When it has passed the
pedestrians can walk again until all have reached their exits
(Figure~\ref{fig:po_int_car}, bottom right).

\subsection{The Police Separates Conflicting Hooligans}
\label{sub:hools}

In this example we consider $n=2$ groups of conflicting hooligans and
their interaction with police officers in a $d=2$ dimensional
region. For the hooligans we use a model of the form
\begin{equation}
  \label{eq:Ex3_1}
  \partial_t \rho^i
  +
  \div
  \left[
    \rho^i (1-\rho^i)
    \left(- w^i(x,p) + \mathcal{A}^i (\rho)  \right)
  \right]
  =
  0,
  \qquad i=1,2\,,
\end{equation}
where $\rho^i$ is the density of the $i$-th group.  Here $w^i \in
\C2(\reali^d \times \reali^m; \reali^d)$ describes the preferred
direction of the hooligans belonging to the $i$-th group and located at
$x$ in presence of the police officers $p^1, \ldots, p^N$.  The terms
$\mathcal{A}^1 (\rho), \mathcal{A}^2 (\rho)$ modify the hooligans'
direction according to their distribution in space.  The movement of
the $N$ police officers is described by the ODEs
\begin{equation}
  \label{eq:Ex3_2}
  \dot{p}^k = I_k(p) + \mathcal B_k(\rho),
  \qquad k=1, \ldots, N,
\end{equation}
where $p^k = [p^k_1, p^k_2]^T$ denotes the position in $\reali^2$ of
the $k$-th policeman; so we set $m = 2N$.  The term $I_k \in \C0
(\reali^{2N};\reali^2)$ avoids concentrations of officers at the same
place, while the term $\mathcal B_k(\rho)$ takes into consideration
the distribution of the hooligans.

The present model fits in the framework presented in
Section~\ref{sec:AR} by setting
\begin{equation}
  \label{eq:cars-func-def}
  \begin{array}{rcl@{\qquad}rcl}
    q^1(\rho) & = & \rho (1-\rho),
    &
    q^2(\rho) & = & \rho (1-\rho),
    \\
    v^1(t,x,A,p) & = &  - w^1(x,p) + A,
    &
    v^2(t,x,A,p) & = & - w^2(x,p) + A,
    \\
    F_k (t,p,B) & = &  I_k(p) + B_k,
  \end{array}
\end{equation}
\begin{equation}
  \label{eq:hooligans-2}
  \begin{array}{@{}rcl@{}}
    \displaystyle
    \mathcal{A}^1(\rho)
    & = & \displaystyle
    \frac{\epsilon_{11} \; \eta\ast(\rho^1 - \bar \rho) \;
      \nabla_x (\rho^1\ast \eta)}{\sqrt{1+\Vert\eta\ast(\rho^1 - \bar \rho)
        \nabla_x (\rho^1\ast \eta)\Vert^2}} +
    \frac{\epsilon_{12} \; \eta\ast(\rho^2-\rho^1) \nabla_x (\rho^2\ast
      \eta)}{\sqrt{1+\Vert\eta\ast(\rho^2-\rho^1) \nabla_x (\rho^2\ast
        \eta)\Vert^2}},
    \vspace{.2cm}
    \\
    \displaystyle
    \mathcal{A}^2(\rho)
    & = & \displaystyle
    \frac{\epsilon_{22} \; \eta\ast(\rho^2 - \bar \rho) \;
      \nabla_x (\rho^2\ast \eta)}{\sqrt{1+\Vert\eta\ast(\rho^1 - \bar \rho)
        \nabla_x (\rho^2\ast \eta)\Vert^2}} +
    \frac{\epsilon_{21} \; \eta\ast(\rho^1-\rho^2) \nabla_x (\rho^1\ast
      \eta)}{\sqrt{1+\Vert\eta\ast(\rho^1-\rho^2) \nabla_x (\rho^1\ast
        \eta)\Vert^2}},
    \vspace{.2cm}
    \\ \displaystyle
    \mathcal B_k(\rho)(p)
    & = & \displaystyle
    \bar{\epsilon}_1
    \frac{1}{N}\sum_{j=1}^{N}
    \sum_{l\neq j}\frac{\nabla_x((\bar{\eta}\ast\rho^l)(\bar{\eta}\ast\rho^j))(p^k)}
    {\sqrt{1+\Vert\nabla_x((\bar{\eta}\ast\rho^l)(\bar{\eta}\ast\rho^j))(p^k)
        \Vert^2}}.
  \end{array}
\end{equation}
In~(\ref{eq:hooligans-2}), the operator $\mathcal A^i$ is composed by
two terms describing the attraction, respectively repulsion, between
members of the same, respectively different, group. Here, we introduce
a \emph{preferred density} $\bar \rho \in [0,1]$.  If the density of
one group is lower than $\bar \rho$, then members of that group tend
to move towards each other.  On the contrary, if the density is bigger
than $\bar \rho$, then they tend to disperse.  Moreover, the operator
$\mathcal A^i$ also models the fact that one group of hooligans aims
at attacking the other group as soon as it feels to be stronger.  On
the contrary, hooligans of a faction try to avoid the adversaries in
case they are less represented.

\begin{proposition}
  \label{prop:Ex3}
  Let $N \in \naturali \setminus \{0\}$.  Assume $\eta, \bar\eta \in
  \Cc2(\reali^2; \reali^+)$, $w^i \in \C2(\reali^2 \times \reali^{2N};
  \reali^2)$, $I_k \in (\C0 \cap \L\infty)(\reali^{2N};
  \reali^2)$. Then, the functions defined
  in~(\ref{eq:cars-func-def})--(\ref{eq:hooligans-2}) satisfy
  \textbf{(v.1)}, \textbf{(F)}, \textbf{($\boldsymbol{\mathcal{A}}$)},
  \textbf{($\boldsymbol{\mathcal{B}}$)} and \textbf{(q)}. In
  particular, Corollary~\ref{cor:zero} applies
  to~(\ref{eq:cars-func-def})--(\ref{eq:hooligans-2}).
\end{proposition}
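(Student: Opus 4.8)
The plan is to verify, one by one, the five structural hypotheses \textbf{(q)}, \textbf{(v.1)}, \textbf{(F)}, \textbf{($\boldsymbol{\mathcal{A}}$)} and \textbf{($\boldsymbol{\mathcal{B}}$)} for the explicit data in \eqref{eq:cars-func-def}--\eqref{eq:hooligans-2}, exactly as in Propositions~\ref{prop:Ex1} and~\ref{prop:Ex2}; once these hold, Corollary~\ref{cor:zero} applies verbatim. The trivial part is \textbf{(q)}: $q^i(\rho)=\rho(1-\rho)$ is a polynomial, hence of class $\C2$, with $q^i(0)=q^i(1)=0$, so that $R=1$. The substance lies entirely in the Lipschitz estimates for the nonlocal operators $\mathcal{A}^i$ and $\mathcal{B}_k$ into the strong spaces $\C2\cap\W21$ and $\W1\infty$.

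For \textbf{(v.1)}, since $w^i\in\C2$ and $v^i(t,x,A,p)=-w^i(x,p)+A$ depends smoothly (indeed affinely) on $A$ and is independent of $t$, the map $v^i$ is of class $\C2$. Global boundedness does not hold literally, because of the additive $A$; however, along any solution the argument $A$ equals $\mathcal{A}^i(\rho(t))(x)$, and each of the two summands of $\mathcal{A}^i$ in \eqref{eq:hooligans-2} has the form (constant)$\times\Phi(\cdot)$ with $\Phi(\xi)=\xi/\sqrt{1+\norma{\xi}^2}$, whose norm stays strictly below $1$; hence the range of $\mathcal{A}^i$ lies in the fixed ball of radius $\epsilon_{i1}+\epsilon_{i2}$. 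Replacing the $A$--dependence of $v^i$ outside this ball by a smooth cut--off therefore does not affect any solution and produces a representative in $\C2\cap\L\infty$. Together with the compact support of the datum---which by Corollary~\ref{cor:zero} confines $x$, and by the sublinear growth in \textbf{(F)} confines $p$, to compact sets on $[0,T]$---this places us in the hypotheses of Corollary~\ref{cor:zero}.

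Condition \textbf{(F)} for $F_k(t,p,B)=I_k(p)+B_k$ is checked directly: continuity in $p$ and in $B$ and measurability in $t$ follow from $I_k\in\C0$ and the affine dependence on $B$; the growth bound follows from $I_k\in\L\infty$, since $\norma{F(t,p,B)}\leq\norma{I}_{\L\infty}+\norma{B}$; and the local Lipschitz estimate reduces to that of $p\mapsto I_k(p)$ on compacta, the $B$--dependence being an isometry. The heart of the argument is \textbf{($\boldsymbol{\mathcal{A}}$)} and \textbf{($\boldsymbol{\mathcal{B}}$)}. I would present both operators as compositions of three elementary bricks: the linear convolution maps $\rho\mapsto\eta\ast\rho^j$ and $\rho\mapsto\nabla_x(\rho^j\ast\eta)=\rho^j\ast\nabla\eta$ (and the analogues with $\bar\eta$), continuous from $\L1$ into $\C2$; the smooth saturation $\Phi$, all of whose derivatives are bounded, hence globally Lipschitz; and pointwise products. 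The decisive observation is that, since $0\leq\rho^j\leq R$ and $\eta,\bar\eta\geq0$ belong to $\Cc2$, the images $\eta\ast\rho^j$ and all their derivatives up to order two are bounded by $R$ times a constant depending only on $\eta$, uniformly in $\rho\in\L1(\reali^d;[0,R]^n)$; thus every brick takes values in a fixed bounded set, on which products and compositions with $\Phi$ are Lipschitz with uniform constants. The normalizations $\mathcal{A}^i(0)=0$ and $\mathcal{B}(0)=0$ are immediate, as each term of \eqref{eq:hooligans-2} carries a factor $\nabla_x(\rho^j\ast\eta)$, respectively $\nabla_x((\bar\eta\ast\rho^l)(\bar\eta\ast\rho^j))$, vanishing at $\rho=0$.

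The main obstacle I anticipate is not conceptual but the uniform bookkeeping of these Lipschitz constants in the strong norms, and in particular the $\L1$ part of the $\W21$ estimate for $\mathcal{A}^i$. There one must expand differences such as $s_1V_1-s_2V_2=s_1(V_1-V_2)+(s_1-s_2)V_2$ (with $s=\eta\ast(\rho^1-\bar\rho)$ scalar and $V=\rho^1\ast\nabla\eta$ vector) and estimate each product by placing the difference factor in $\L1$ and the complementary, uniformly bounded factor in $\L\infty$; only this pairing, together with Young's inequality and the bounds from $0\leq\rho\leq R$, yields constants independent of $\norma{\rho}_{\L1}$. The $\C2$ and $\W1\infty$ estimates are analogous, applying the Leibniz rule up to second order and controlling each resulting term by an $\L\infty$ norm of a derivative of $\eta$ or $\bar\eta$ times an $\L1$ norm of the density difference. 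The integrability required for $\W21$ follows from $\Phi(0)=0$ and $\norma{\Phi(\xi)}\leq\norma{\xi}$, which transfer the $\L1$ summability of the convolved gradients to $\mathcal{A}^i(\rho)$ itself.
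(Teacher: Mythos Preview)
Your approach is essentially the paper's: it too declares \textbf{(q)}, \textbf{(v.1)} and \textbf{(F)} immediate with $R=1$, and handles \textbf{($\boldsymbol{\mathcal{A}}$)} and \textbf{($\boldsymbol{\mathcal{B}}$)} by observing that $\phi(\xi)=\xi/\sqrt{1+\xi^2}$ is globally Lipschitz, that $(r_1,r_2)\mapsto\phi(r_1 r_2)$ is Lipschitz on bounded sets, and that the convolution maps $\rho\mapsto\eta*(\rho-\bar\rho)$ and $\rho\mapsto\nabla_x(\eta*\rho)$ are bounded and Lipschitz into the required spaces. Your write-up is considerably more detailed than the paper's three-line sketch; in particular you are more careful than the paper about the $\L\infty$ part of \textbf{(v.1)} (the affine $A$-dependence, which the paper simply ignores), and your observation that $\norma{\Phi(\xi)}\leq\norma{\xi}$ is what transfers $\L1$-integrability to $\mathcal{A}^i(\rho)$ is exactly the right mechanism for the $\W21$ bound.

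One point both you and the paper pass over: your reduction of item~4 in \textbf{(F)} to the local Lipschitz continuity of $p\mapsto I_k(p)$ is correct as a reduction, but it does not close under the stated hypothesis $I_k\in(\C0\cap\L\infty)(\reali^{2N};\reali^2)$, which does not imply local Lipschitz continuity. The concrete $I_k$ used in the numerical example is smooth, so no harm is done there, but the Proposition as stated needs a mild strengthening of the assumption on $I_k$ (e.g.\ $I_k$ locally Lipschitz) for \textbf{(F)} to hold in full.
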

\noindent The proof is deferred to Section~\ref{sec:technical}.

As a specific example, in the computational domain $[0,1]^2$, we
consider $N=4$ policemen, so that $m = 8$, and the
parameters
\begin{displaymath}
  \epsilon_{11} = 0.5\,, \qquad \epsilon_{22} = 0.5 \,,\qquad
  \epsilon_{12} = 0.5\,, \qquad \epsilon_{21} = 0.5 \,,\qquad
  \bar \epsilon_1 = 0.4\,, \qquad \bar \rho = 0.5\,,
\end{displaymath}
with the functions
\begin{displaymath}
  \begin{array}{@{}rclrclrcl@{}}
    w^1(x,p)
    & = &
    \displaystyle
    \frac{\epsilon_3}{N}
    \sum_{j=1}^{N}
    \hat{\eta}(x-p^j)
    \left[
      \begin{array}{@{}c@{}}
        0 \\ -1
      \end{array}
    \right],
    &
    \epsilon_3 & = & 0.1,
    \\
    w^2(x,p)
    & = &
    \displaystyle
    \frac{\epsilon_4}{N}
    \sum_{j=1}^{N}
    \hat{\eta}(x-p^j)
    \left[
      \begin{array}{@{}c@{}}
        0 \\ 1
      \end{array}
    \right],
    &
    \epsilon_4 & = & 0.1,
    \\
    I_k(p)
    & = &
    \displaystyle
    \frac{\bar{\epsilon}_2}{N}
    \sum_{j=1}^{N}
    \frac{\nabla_x \tilde{\eta}(p^j-p^k)}{\sqrt{1+\Vert\nabla_x
        \tilde{\eta}(p^j-p^k) \Vert^2}},
    \quad
    &
    \bar \epsilon_2 & = & 0.2\,,
    &
    k & = & 1,\ldots,N \,.
  \end{array}
\end{displaymath}
Moreover, we let
\begin{displaymath}
  \begin{array}{rcl@{\mbox{ with }}r}
    \eta_{r} (x)
    & = &
    \left\{
      \begin{array}{ll@{}}
        \exp\left(
          -\frac{5{x_1}^2}{r^2 - {x_1}^2}
          -\frac{5{x_2}^2}{r^2 - {x_2}^2}
        \right),
        &
        x \in [-r, r]^2,
        \\
        0, & \mbox{otherwise,}
      \end{array}
    \right.
    &
    \begin{array}{rcl@{}}
      \eta (x)
      & = &
      \frac{\eta_{0.1} (x)}{\int_{\reali^2} \eta_{0.1} (x) \d{x}} \,,
      \\
      \hat\eta (x)
      & = &
      \frac{\eta_{0.15} (x)}{\int_{\reali^2} \eta_{0.15} (x) \d{x}} \,,
    \end{array}
    \\
    \tilde \eta_r (x)
    & = &
    \left\{
      \begin{array}{ll@{}}
        \left[
          \left(1 - \frac{x_1}{r})^2\right)
          \left(1 - \frac{x_2}{r})^2\right)
        \right]^3,
        & x \in [-r,r]^2,
        \\
        0,
        & \mbox{otherwise,}
      \end{array}
    \right.
    &
    \begin{array}{rcl@{}}
      \bar\eta (x)
      & = &
      \frac{\tilde\eta_{0.1} (x)}{\int_{\reali^2} \tilde\eta_{0.1} (x) \d{x}}\,, \;
      \\
      \tilde\eta (x)
      & = &
      \frac{\tilde\eta_{0.2} (x)}{\int_{\reali^2} \tilde\eta_{0.2} (x) \d{x}} \,. \;
    \end{array}
  \end{array}
\end{displaymath}

For the numerical example, the initial conditions are
\begin{equation}
  \label{eq:Ex3_ID}
  \begin{array}{rcl}
    \rho_0^1 & = &0.9 \; \chi_{\strut [0.25,0.75]\times[0.2,0.5]},
    \\
    \rho_0^2 & = & 0.7 \; \chi_{\strut [0.25,0.75]\times[0.5,0.8]},
  \end{array}
  \qquad \mbox{ and } \qquad
  \begin{array}{rcl}
    p_o^1 & = &[0.1,0.7]^T,
    \\
    p_o^2 & = &[0.9,0.3]^T,
    \\
    p_o^3 & = &[0.1,0.4]^T,
    \\
    p_o^4 & = &[0.9,0.7]^T \,.
  \end{array}
\end{equation}
In the pictures below the density of the two groups are plotted
separately.  The police officers are indicated by green circles.
\begin{figure}[ht!]
  \centering
  \includegraphics*[width=0.45\textwidth, viewport=125 30 1350
  800]{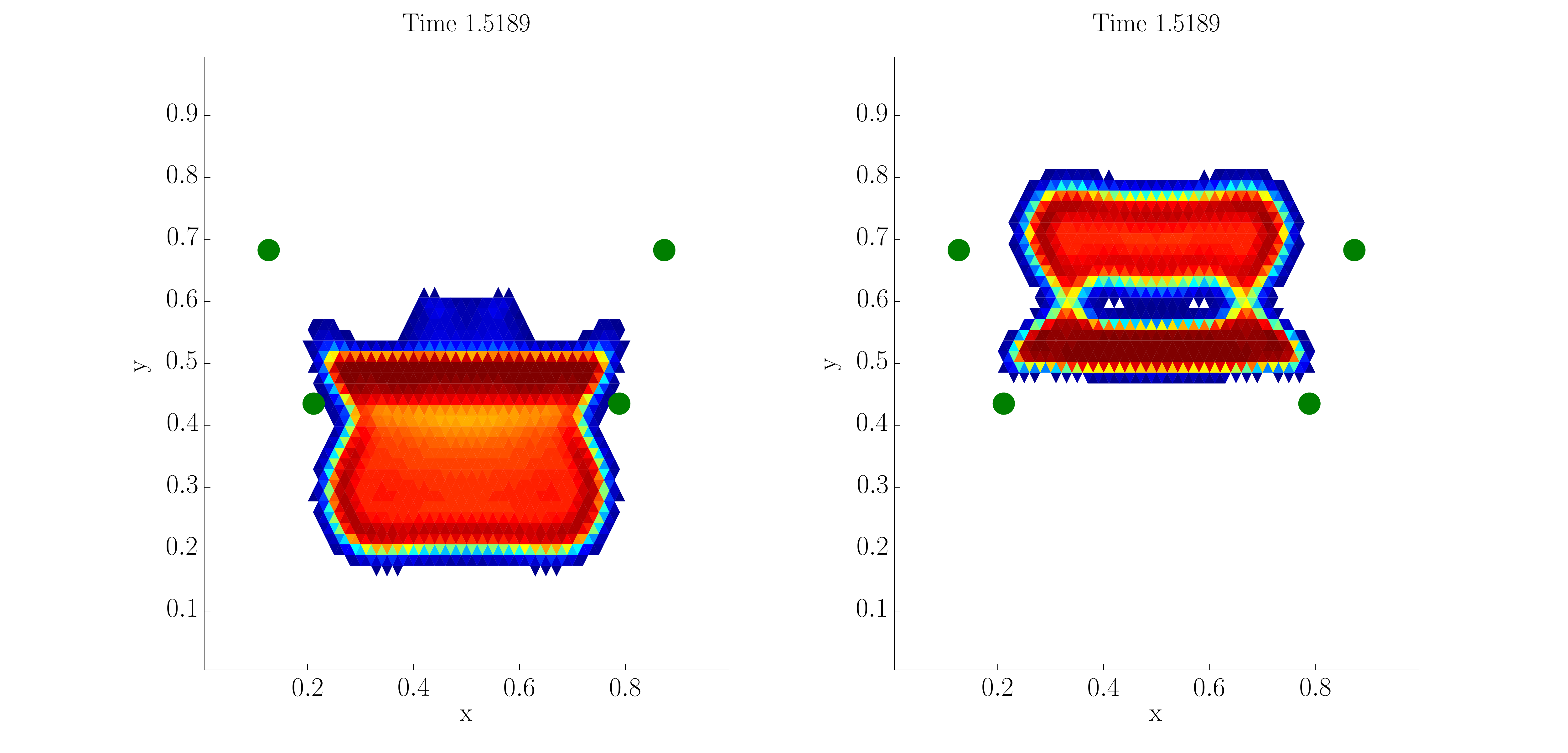}\hfill%
  \includegraphics*[width=0.45\textwidth, viewport=125 30 1350
  800]{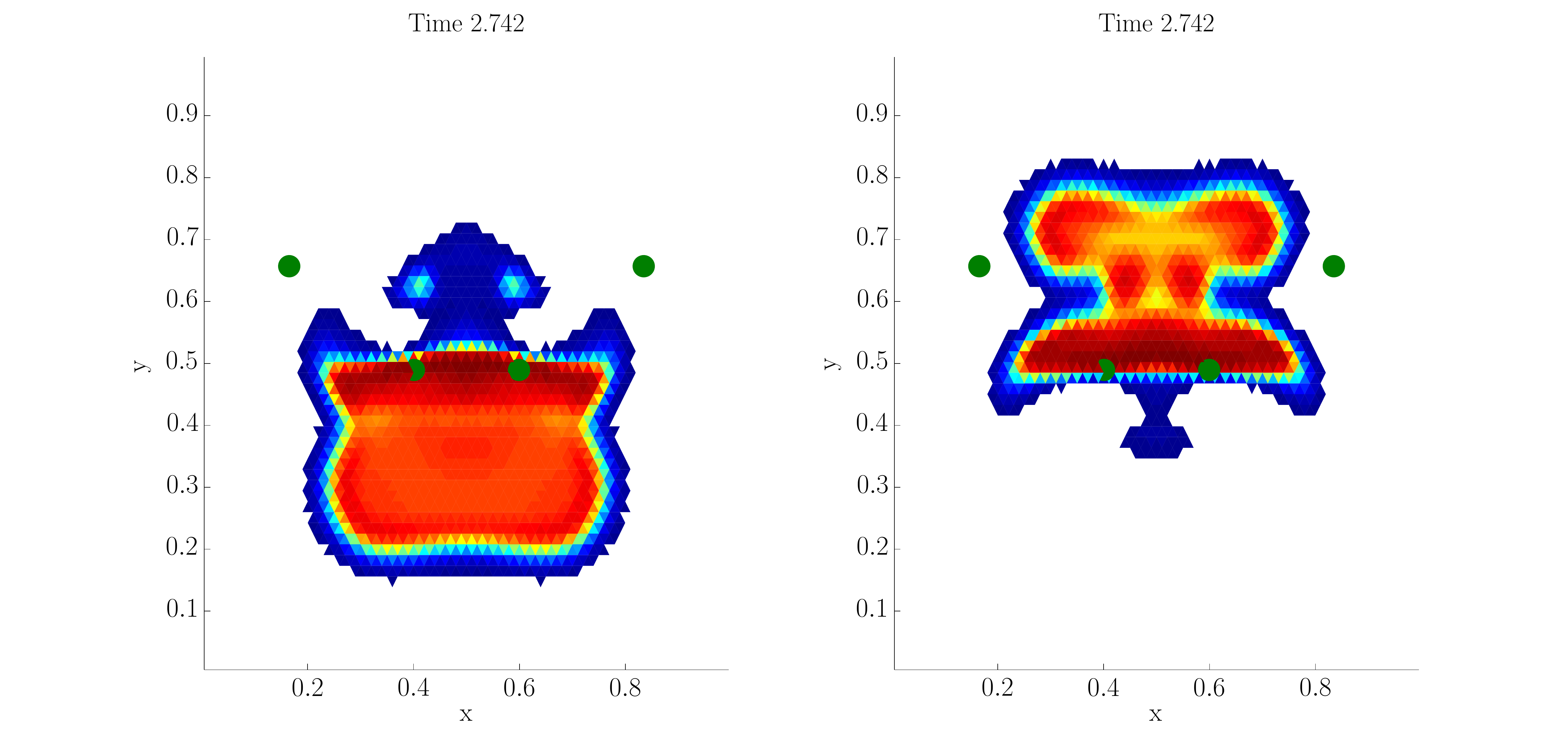}\\
  \includegraphics*[width=0.45\textwidth, viewport=125 30 1350
  800]{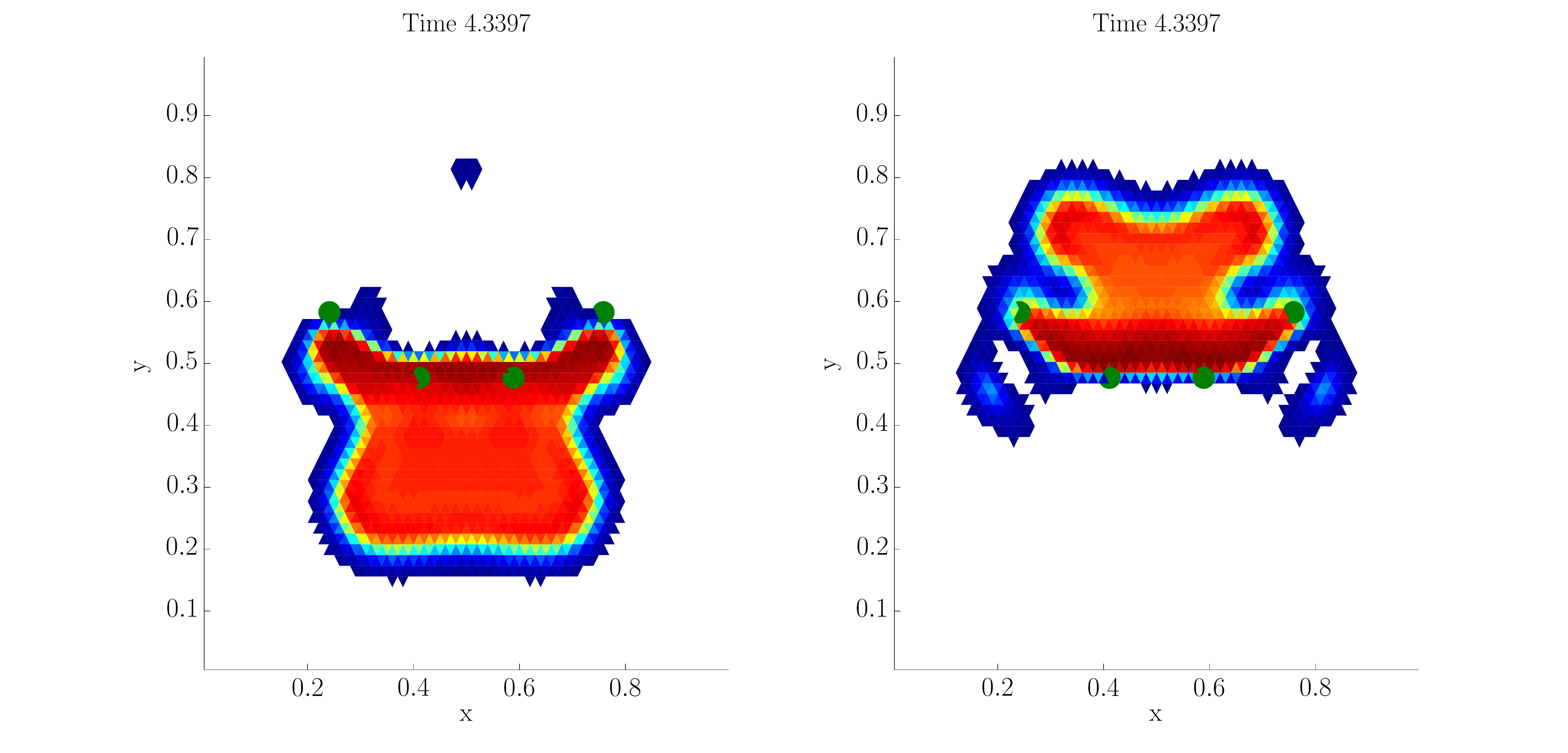}\hfill%
  \includegraphics*[width=0.45\textwidth, viewport=125 30 1350
  800]{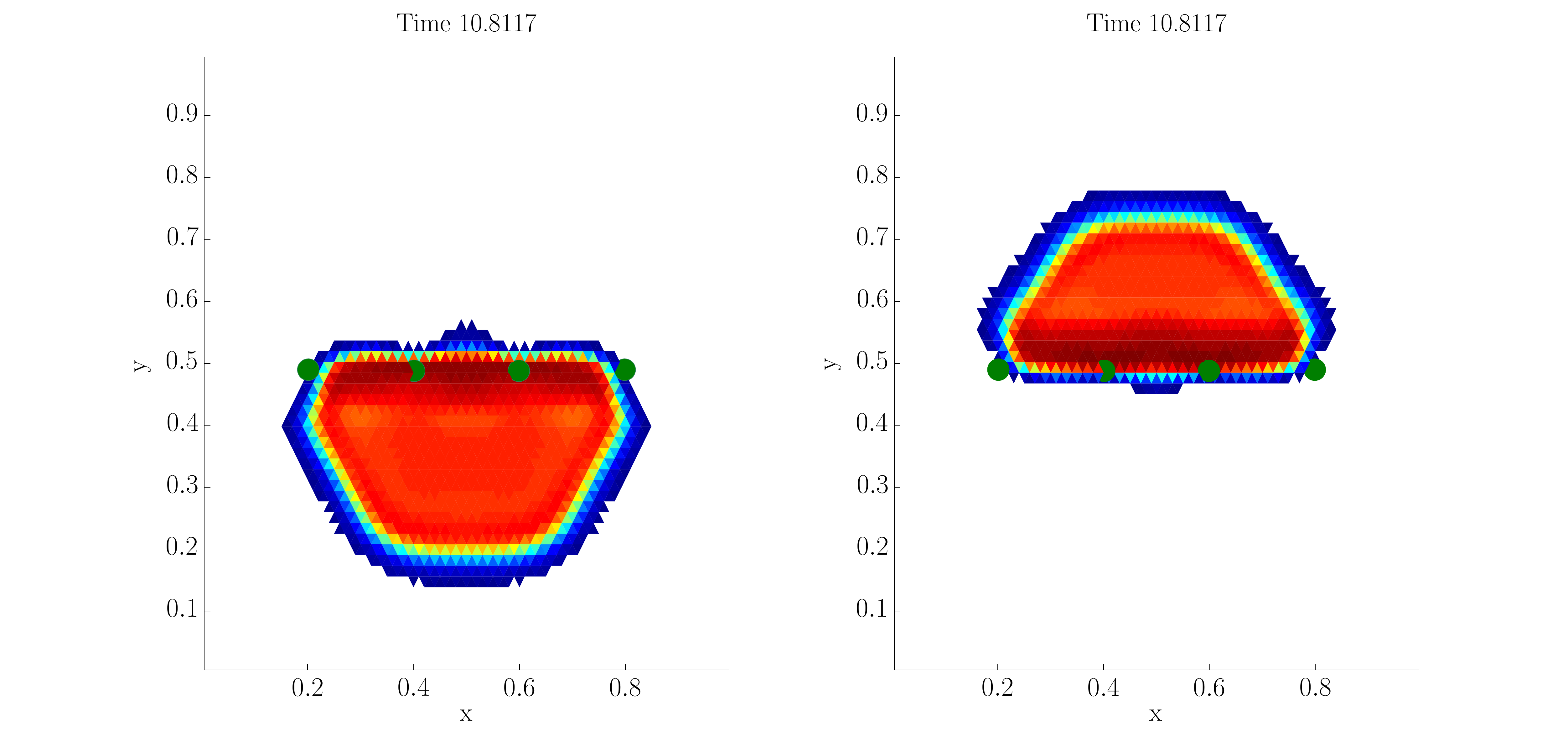}\\
  \caption{Plots of the solution
    to~\eqref{eq:Ex3_1}--\eqref{eq:Ex3_2}--%
    \eqref{eq:Ex3_ID} on the $(x,y)$ plane. In each of the four pairs
    of diagrams, $\rho_1$ is on the left and $\rho_2$ is on the right.
    Above left: two groups of hooligans start fighting. Above right:
    two officers start to separate the fighting groups. Bottom left:
    the officers succeed in separating the groups in the central part,
    but fighting continues on the sides. Bottom right: the four
    officers succeed in separating the two groups.}
  \label{fig:po_int_hools_1}
\end{figure}
At the beginning the two groups of hooligans start fighting in the
middle of the domain, while some part of the groups split from
the rest and stay calm (Figure~\ref{fig:po_int_hools_1}, top left).
As the conflicting groups mix, the police approaches and tries to
separate them.  The first two officers can not completely isolate the
groups (Figure~\ref{fig:po_int_hools_1}, top right) as at the
boundaries the hooligans still attack.  This stops when the other two
policemen join the line of officers (Figure \ref{fig:po_int_hools_1},
bottom left).  At the end the police can separate the conflicting
parties (Figure \ref{fig:po_int_hools_1}, bottom right). This latter
configuration appears to be relatively stationary.
\begin{figure}[ht!]
  \centering
  \includegraphics*[width=0.45\textwidth, viewport=125 30 1350
  800]{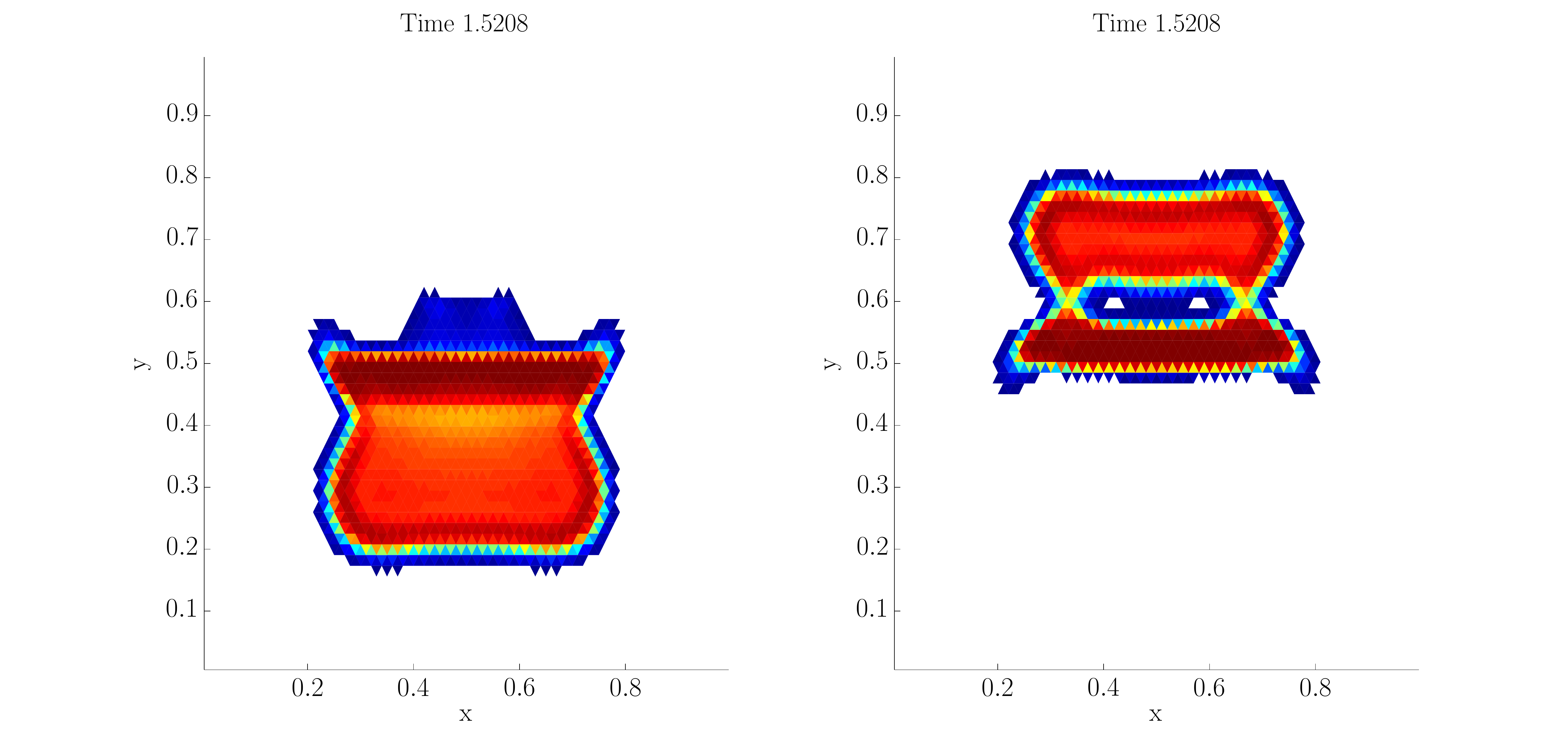}\hfill%
  \includegraphics*[width=0.45\textwidth, viewport=125 30 1350
  800]{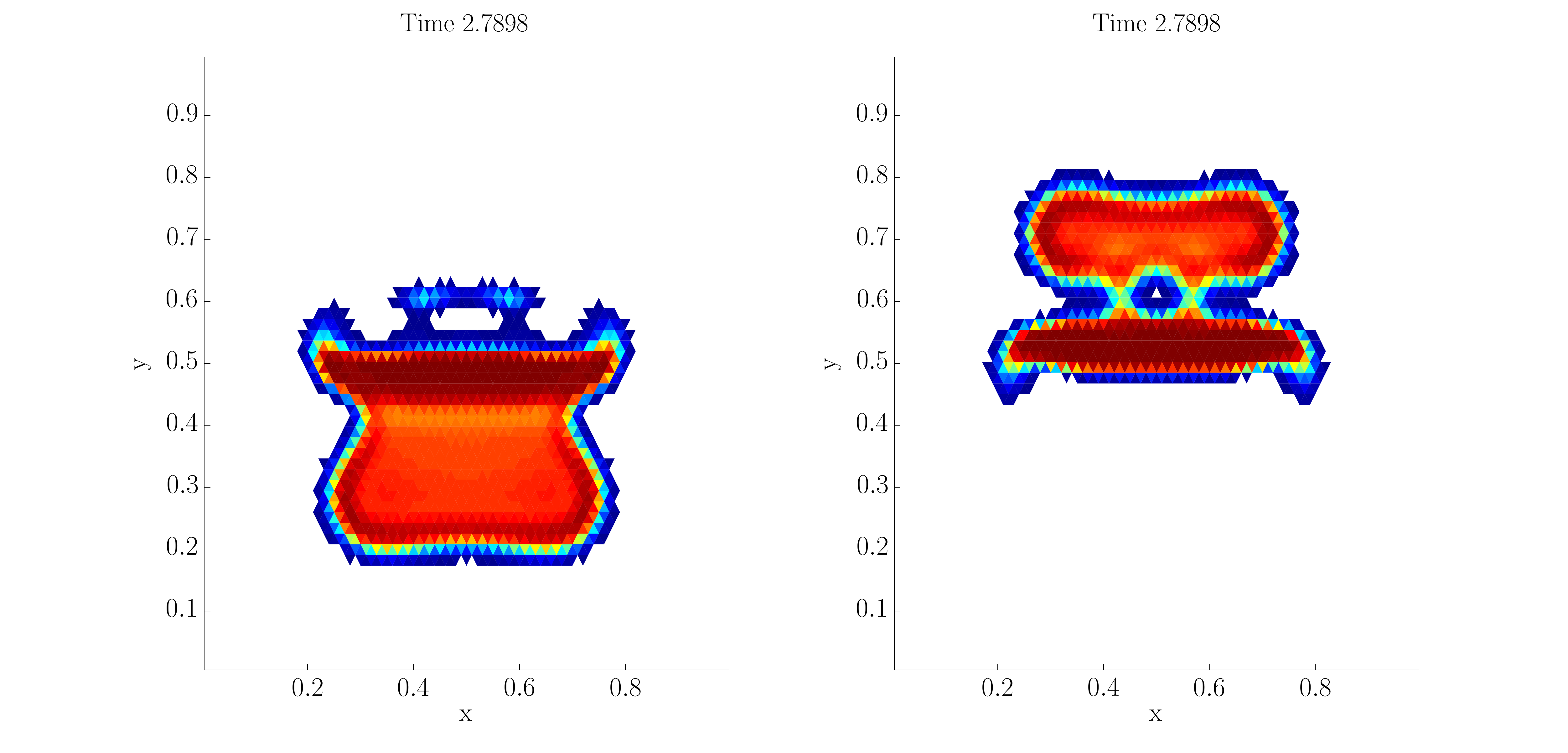}\\
  \includegraphics*[width=0.45\textwidth, viewport=125 30 1350
  800]{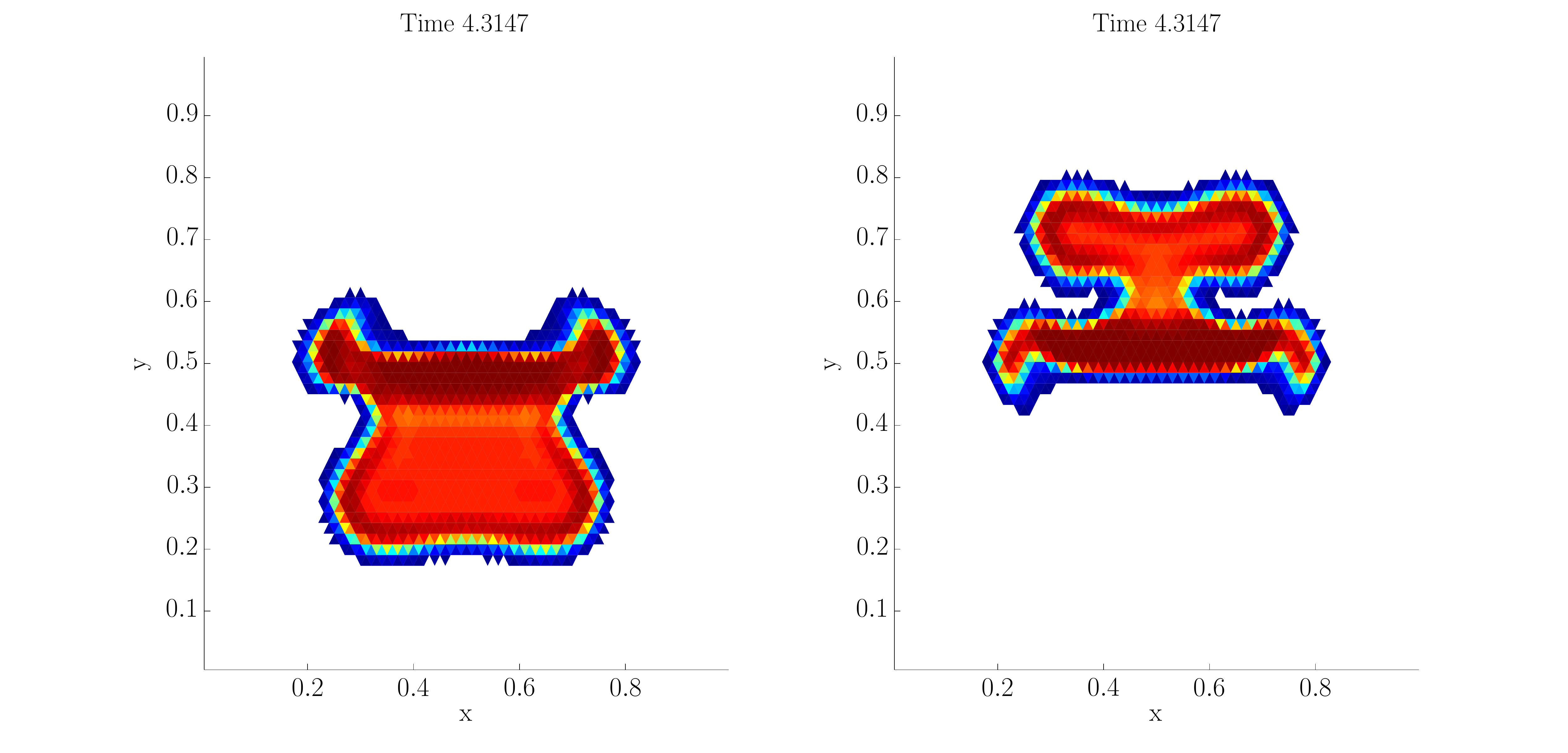}\hfill%
  \includegraphics*[width=0.45\textwidth, viewport=125 30 1350
  800]{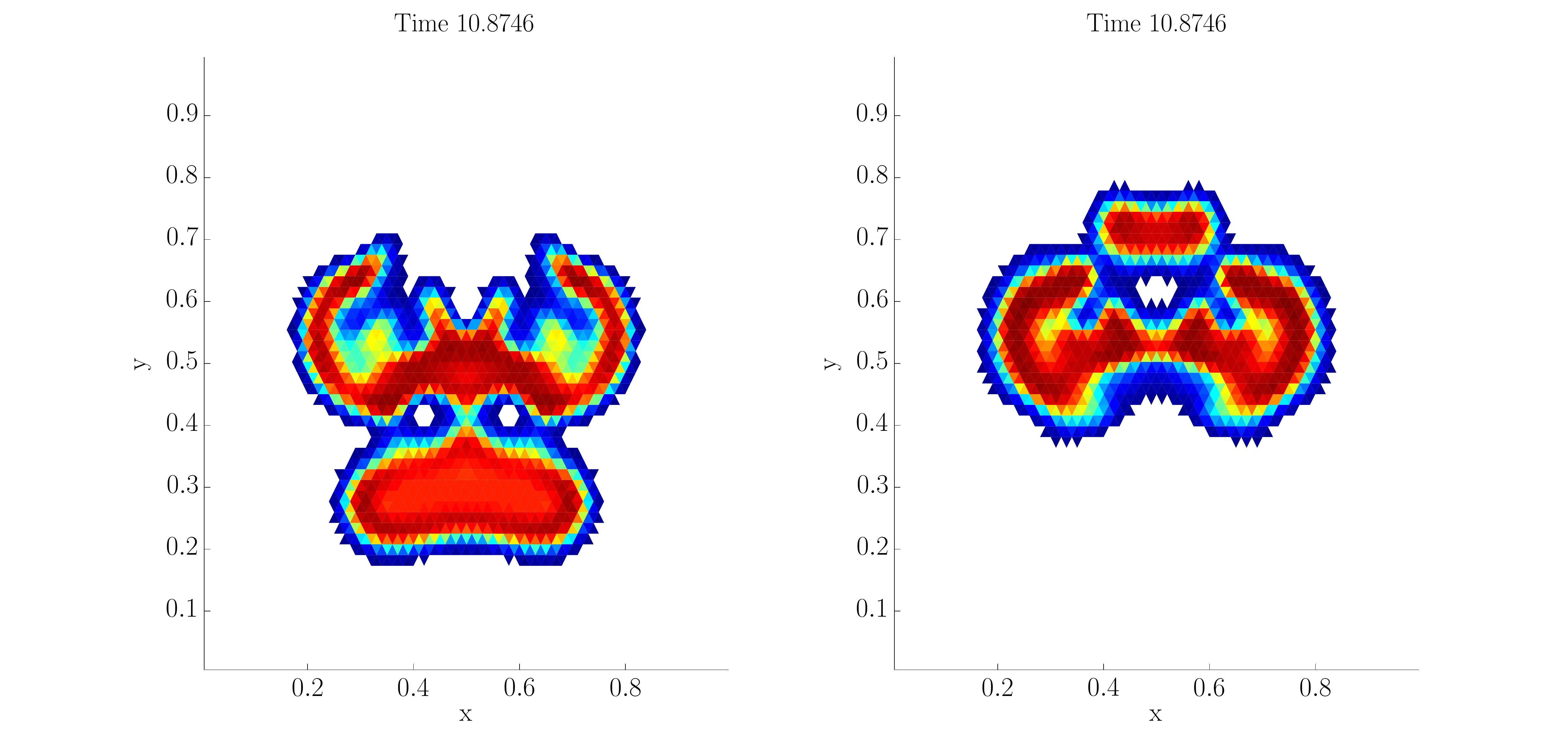}\\
  \caption{Plots of the solution
    to~\eqref{eq:Ex3_1}--\eqref{eq:Ex3_2}--%
    \eqref{eq:Ex3_ID} on the $(x,y)$ plane, but with $\epsilon_3 =
    \epsilon_4 = 0$, so that police officers are absent. Note that,
    differently from the integration shown in
    Figure~\ref{fig:po_int_hools_1}, here the two groups are
    superimposed, meaning that a fight takes place. In each of the
    four pairs of diagrams, $\rho_1$ is on the left and $\rho_2$ is on
    the right.}
  \label{fig:hools_no_police}
\end{figure}
The same equations, but with no police officers so that $\epsilon_3 =
\epsilon_4 = 0$, is displayed in
Figure~\ref{fig:hools_no_police}. Note that the two groups superimpose
and in the region occupied by both a fight takes place.

\section{Technical Details}
\label{sec:technical}

Denote $W_d = \int_0^{\pi/2} \left(\cos(\theta)\right)^d
\d\theta$. For later use, we state here without proof the Gr\"onwall
type lemma used in the sequel.

\begin{lemma}
  \label{lem:Gronwall}
  Let $T > 0$, $\delta \in \C0\left([0,T];\reali^+\right)$, $\alpha
  \in \Lloc{\infty}\left([0,T];\reali^+\right)$ and $\beta \in
  \Lloc1([0,T];\pint\reali^+)$. If $\displaystyle \delta(t) \leq
  \alpha(t) + \int_0^t \beta(\tau) \, \delta(\tau) \d\tau$ for a.e.~$t
  \in [0, T]$ then,
  \begin{displaymath}
    \delta(t)
    \leq \alpha(t) + \int_0^t \alpha(\tau) \, \beta(\tau) \,
    e^{\int_\tau^t \beta(s)\, \d{s}} \, \d\tau \leq \left(\sup_{\tau \in
        [0,t]} \alpha (\tau)\right) \; e^{\int_0^t \beta(\tau)\,
      \d\tau} \,,\quad \text{for a.e.~$t \in [0,T]$.}
  \end{displaymath}
\end{lemma}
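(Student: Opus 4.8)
The plan is to reduce the statement to the classical linear Grönwall inequality by introducing the running integral $R(t) = \int_0^t \beta(\tau)\,\delta(\tau)\,\d\tau$. First I would observe that, since $\delta$ is continuous and $\beta \in \Lloc1$, the product $\beta\,\delta$ lies in $\Lloc1$, so $R$ is absolutely continuous on $[0,T]$ with $R'(t) = \beta(t)\,\delta(t)$ for a.e.~$t$ and $R(0)=0$. The hypothesis then reads $\delta(t) \leq \alpha(t) + R(t)$ a.e., and because $\beta(t) \geq 0$ I may multiply through by $\beta(t)$ to obtain the differential inequality
\begin{displaymath}
  R'(t) = \beta(t)\,\delta(t) \leq \beta(t)\,\alpha(t) + \beta(t)\,R(t)
  \qquad \text{for a.e.~} t \in [0,T].
\end{displaymath}

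Next I would treat this as a linear differential inequality in $R$ and solve it with the integrating factor $\exp\left(-\int_0^t \beta(s)\,\d{s}\right)$. Multiplying and using the product rule for absolutely continuous functions gives
\begin{displaymath}
  \frac{\d{}}{\d{t}}\!\left(R(t)\,e^{-\int_0^t \beta(s)\,\d{s}}\right)
  \leq \beta(t)\,\alpha(t)\,e^{-\int_0^t \beta(s)\,\d{s}}
  \qquad \text{a.e.},
\end{displaymath}
and integrating from $0$ to $t$ (recalling $R(0)=0$) yields, after multiplying back by $e^{\int_0^t \beta(s)\,\d{s}}$,
\begin{displaymath}
  R(t) \leq \int_0^t \alpha(\tau)\,\beta(\tau)\,e^{\int_\tau^t \beta(s)\,\d{s}}\,\d\tau.
\end{displaymath}
Substituting into $\delta(t) \leq \alpha(t) + R(t)$ delivers the first asserted bound.

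For the second inequality I would bound $\alpha$ pointwise by $M(t) := \sup_{\tau\in[0,t]}\alpha(\tau)$ inside the integral, reducing the right-hand side to $\alpha(t) + M(t)\int_0^t \beta(\tau)\,e^{\int_\tau^t \beta(s)\,\d{s}}\,\d\tau$. The remaining integral is computed explicitly: since $\tau \mapsto e^{\int_\tau^t \beta(s)\,\d{s}}$ has a.e.~derivative $-\beta(\tau)\,e^{\int_\tau^t \beta(s)\,\d{s}}$, the fundamental theorem of calculus gives $\int_0^t \beta(\tau)\,e^{\int_\tau^t \beta(s)\,\d{s}}\,\d\tau = e^{\int_0^t \beta(s)\,\d{s}} - 1$. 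Hence the expression is at most $M(t) + M(t)\bigl(e^{\int_0^t \beta(s)\,\d{s}} - 1\bigr) = M(t)\,e^{\int_0^t \beta(s)\,\d{s}}$, which is the claimed estimate.

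The only delicate point is the measure-theoretic bookkeeping: I must justify that $R$ is absolutely continuous, so that the product rule and the fundamental theorem of calculus legitimately apply to $t \mapsto R(t)\,e^{-\int_0^t \beta(s)\,\d{s}}$, and that the almost-everywhere differential inequality integrates correctly despite $\alpha$ being only in $\Lloc\infty$ and the hypothesis holding only for a.e.~$t$. This all follows from the absolute continuity of $R$ and of the integrating factor together with the nonnegativity of $\beta$, so no genuine obstacle arises beyond a careful invocation of the standard calculus of absolutely continuous functions.
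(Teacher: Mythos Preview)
Your argument is correct and is exactly the standard integrating-factor proof of the linear Gr\"onwall inequality. The paper itself does not supply a proof, stating only that it is immediate and hence omitted, so there is nothing to compare against beyond noting that your approach is the canonical one.
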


\noindent The proof is immediate and hence omitted.

The well posedness of the Cauchy problem
\begin{equation}
  \label{eq:chauchy-1}
  \left\{
    \begin{array}{l}
      \partial_t \rho + \nabla_x \cdot \left(q(\rho) V(t,x)\right) = 0,
      \\
      \rho (0,x) = \rho_o (x).
    \end{array}
  \right.
\end{equation}
follows from~\cite[Proposition~2.9]{magali-improved}.

\begin{lemma}
  \label{lem:stab-1}
  Assume $R>0$ and
  \begin{eqnarray}
    \label{eq:stab-1:q}
    q & \in &\C2(\reali^+; \reali^+) \textrm{ satisfies } q(0) = 0
    \textrm{ and } q(R) = 0,
    \\
    \label{eq:stab-1:V}
    V & \in &\C2 (\reali^+ \times \reali^d; \reali^d)
    \textrm{ satisfies }
    \left\{
      \begin{array}{l}
        \div V(t, \cdot) \in \W{1}{1} (\reali^d; \reali^d),
        \\
        V(t, \cdot) \in \W{1}{\infty} (\reali^d; \reali^d),
      \end{array}
    \right.
    \textrm{ for } t \in \reali^+,
    \\
    \label{eq:stab-1:rho}
    \rho_o & \in &\L1 (\reali^d; [0,R]).
  \end{eqnarray}
  Then, there exists a unique Kru\v zkov solution $\rho \in \C0
  \left(\reali^+; \L1(\reali^d; [0,R])\right)$ to~\eqref{eq:chauchy-1}
  and
  \begin{equation}
    \label{eq:3}
    \norma{\rho (t)}_{\L1} = \norma{\rho_o}_{\L1}
    \qquad \mbox{for all } t \in \reali^+ \,.
  \end{equation}
  If moreover $\rho_o \in \BV (\reali^d; [0,R])$, then, for every $t >
  0$
  \begin{equation}
    \label{eq:stab-1-BV-est}
    \tv \left(\rho(t)\right)
    \leq
    \left(\tv\left(\rho_o\right) + d \, W_d
      \norma{q}_{\L\infty} \int_0^t \int_{\reali^d}
      \norma{\nabla_x \div V(\tau,x)}_{\reali^d} \d{x} \d\tau\right)
    e^{\kappa_o\, t},
  \end{equation}
  and for every $0 < t_1 < t_2$,
  \begin{equation}
    \label{eq:estimate-in-t}
    \begin{array}{rcl}
      \displaystyle
      \norma{\rho(t_2) - \rho(t_1)}_{\L1}
      & \leq &
      \displaystyle
      \norma{q}_{\L\infty}
      \int_{t_1}^{t_2} \int_{\reali^d}
      \modulo{\nabla_x \cdot V(t,x)} \d{x} \d{t}
      \\[16pt]
      & &
      \displaystyle
      +
      (t_2 - t_1) \norma{q'}_{\L\infty} \norma{V}_{\L\infty}
      \sup_{\tau \in [0, t_2]} \tv \left(\rho(\tau)\right),
    \end{array}
  \end{equation}
  where $\kappa_o = (2d + 1) \norma{q'}_{\L\infty} \norma{\nabla_x
    V}_{\L\infty}$.

  Let $q_1$, $q_2$, $V_1$, $V_2$ and $\rho_o^1$, $\rho_o^2$
  satisfy~\eqref{eq:stab-1:q}, \eqref{eq:stab-1:V}
  and~\eqref{eq:stab-1:rho}.  Call $\rho_1$, $\rho_2$ the solutions
  to
  \begin{equation}
    \label{eq:stab-1:2-system}
    \left\{
      \begin{array}{l}
        \partial_t \rho_1 + \div \left(q_1(\rho_1) V_1(t,x)\right) = 0,
        \\
        \rho_1(0,x) = \rho_o^{1}(x),
      \end{array}
    \right.
    \textrm{ and }
    \left\{
      \begin{array}{l}
        \partial_t \rho_2 + \div \left(q_2(\rho_2) V_2(t,x)\right) = 0,
        \\
        \rho_2(0,x) = \rho_o^{2}(x).
      \end{array}
    \right.
  \end{equation}
  Then, for every $t \in \reali^+$,
  \begin{eqnarray*}
    \norma{\rho_1(t) - \rho_2(t)}_{\L1}\!\!\!
    & \leq & \!\!\!
    \norma{\rho_o^{1} - \rho_o^{2}}_{\L1} e^{\kappa t}
    \\
    & + &\!\!\!
    \frac{\kappa_o e^{\kappa_o t} - \kappa e^{\kappa t}}{\kappa_o - \kappa}
    \left[
      \tv(\rho_o^1)
      +
      d\, W_d  \norma{q_1}_{\L\infty}
      \norma{\nabla_x \div V_1}_{\L1 ([0,t];\L1)}
    \right]
    \\
    & &
    \quad \times
    \left[
      \norma{q'_2}_{\L\infty} \norma{V_1 - V_2}_{\L1([0,t];\L\infty)}
      +
      \norma{q'_1 - q'_2}_{\L\infty} \norma{V_1}_{\L1([0,t];\L\infty)}
    \right]
    \\
    & + &\!\!\!\!\!
    \left[
      \norma{q_1}_{\L\infty}
      \norma{\div (V_1 - V_2)}_{\L1 ([0,t];\L1)}
      +
      \norma{q_1-q_2}_{\L\infty}
      \norma{\div V_2}_{\L1 ([0,t];\L1)}
    \right]
    e^{\kappa t}\!,
  \end{eqnarray*}
  where
  \begin{equation}
    \label{eq:kappakappa}
    \begin{array}{rcl}
      \kappa_o
      & = &
      \displaystyle
      (2d + 1) \norma{q'_1}_{\L\infty} \norma{\nabla_x V_1}_{\L\infty([0,t],\L\infty)},
      \\
      \kappa
      & = &
      \displaystyle
      \norma{q_1' \, \div V_1 - q_2' \, \div V_2}_{\L\infty}.
    \end{array}
  \end{equation}
\end{lemma}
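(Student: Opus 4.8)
The plan is to reduce the entire statement to one scalar integral inequality for $\delta(t) = \norma{\rho_1(t) - \rho_2(t)}_{\L1}$ and then to close it with the Grönwall Lemma~\ref{lem:Gronwall}, the growth of $\tv(\rho_1)$ being supplied by the estimate~\eqref{eq:stab-1-BV-est} already proved above. Writing the two fluxes as $f_i(t,x,u) = q_i(u)\,V_i(t,x)$, the two mechanisms that create discrepancy are the characteristic speed $\partial_u f_i = q_i'(u)\,V_i$ and the explicit spatial source $q_i(u)\,\div V_i$, whose differences split as
\begin{align*}
  \partial_u(f_1 - f_2) &= (q_1' - q_2')\,V_1 + q_2'\,(V_1 - V_2), \\
  q_1\,\div V_1 - q_2\,\div V_2 &= q_1\,\div(V_1 - V_2) + (q_1 - q_2)\,\div V_2 .
\end{align*}
The first line, measured in $\L\infty$, gives the bracket $\norma{q_2'}_{\L\infty}\norma{V_1-V_2}_{\L\infty} + \norma{q_1'-q_2'}_{\L\infty}\norma{V_1}_{\L\infty}$; the second, measured in $\L1$, gives $\norma{q_1}_{\L\infty}\norma{\div(V_1-V_2)}_{\L1} + \norma{q_1-q_2}_{\L\infty}\norma{\div V_2}_{\L1}$. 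These are exactly the two brackets appearing in the claim.

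First I would establish, by the Kru\v zkov doubling-of-variables technique applied to the pair $(\rho_1,\rho_2)$ — as in the stability analysis underpinning~\cite[Proposition~2.9]{magali-improved} — the comparison inequality
\[
  \delta(t) \leq \delta(0) + \int_0^t \Big( \kappa\,\delta(\tau) + \tv\big(\rho_1(\tau)\big)\,\psi(\tau) + \chi(\tau) \Big)\,\d\tau ,
\]
where $\psi(\tau) = \norma{q_2'}_{\L\infty}\norma{(V_1-V_2)(\tau)}_{\L\infty} + \norma{q_1'-q_2'}_{\L\infty}\norma{V_1(\tau)}_{\L\infty}$ and $\chi(\tau) = \norma{q_1}_{\L\infty}\norma{\div(V_1-V_2)(\tau)}_{\L1} + \norma{q_1-q_2}_{\L\infty}\norma{\div V_2(\tau)}_{\L1}$. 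Concretely, doubling time and space and subtracting the two entropy inequalities, one views $\rho_1$ as an approximate solution of the equation satisfied by $\rho_2$; the defect is $\div\big[f_1(t,x,\rho_1) - f_2(t,x,\rho_1)\big]$, whose convective component $\partial_u(f_1-f_2)\cdot\nabla_x\rho_1$ is controlled in $\L1$ by $\norma{\partial_u(f_1-f_2)}_{\L\infty}\,\tv(\rho_1) = \psi(\tau)\,\tv(\rho_1)$, and whose zeroth order component is $\chi$. Evaluating the defect along $\rho_1$ is what makes the weight $\tv(\rho_1)$ appear, so that only~\eqref{eq:stab-1-BV-est} for $\rho_1$ is needed, while the exponent $\kappa = \norma{q_1'\,\div V_1 - q_2'\,\div V_2}_{\L\infty}$ of~\eqref{eq:kappakappa} is the Lipschitz-in-$\rho$ rate of the zeroth order parts of the two equations and emerges as the Grönwall multiplier.

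Then I would feed this into Lemma~\ref{lem:Gronwall} in its sharp form $\delta(t) \leq \alpha(t) + \kappa\int_0^t \alpha(\tau)\,e^{\kappa(t-\tau)}\,\d\tau$, with $\alpha(t) = \delta(0) + \int_0^t(\tv(\rho_1)\,\psi + \chi)$. The constant $\delta(0)$ integrates to $\delta(0)\,e^{\kappa t}$, giving the first line of the claim. For the zeroth order source, $\int_0^t\chi$ is nondecreasing, hence bounded by the $\L1([0,t];\L1)$ norms displayed in the statement, and the same integration produces the factor $e^{\kappa t}$ of the last line (here $\kappa\geq0$ is used). For the convective source I would insert~\eqref{eq:stab-1-BV-est} in the form $\tv(\rho_1(\tau)) \leq B\,e^{\kappa_o\tau}$ with $B = \tv(\rho_o^1) + d\,W_d\,\norma{q_1}_{\L\infty}\norma{\nabla_x\div V_1}_{\L1([0,t];\L1)}$, pull out $\norma{\psi}_{\L1([0,t])}$, and compute the elementary integral
\[
  e^{\kappa_o t} + \kappa\int_0^t e^{\kappa_o\tau}\,e^{\kappa(t-\tau)}\,\d\tau
  = \frac{\kappa_o\,e^{\kappa_o t} - \kappa\,e^{\kappa t}}{\kappa_o - \kappa},
\]
which is exactly the coefficient multiplying the product of the two brackets in the middle line (its removable singularity at $\kappa_o=\kappa$ giving the harmless limit $(1+\kappa t)\,e^{\kappa t}$).

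The main obstacle is the derivation of the comparison inequality itself: carrying out the doubling of variables for a flux that depends explicitly on $x$ and is only $\W1\infty$ in space, and extracting the convective error in the sharp form $\tv(\rho_1)\,\psi$ rather than with a cruder constant. The two algebraic splittings above are what render transparent the separate roles of $q_1-q_2$ and $V_1-V_2$; the remaining care is to keep the weight $\tv(\rho_1)$ and not $\tv(\rho_2)$, so that the already established BV bound for $\rho_1$ can be invoked, and to verify that only manipulations valid for every sign of $\kappa_o-\kappa$ are used, so that the stated coefficient holds for all admissible data.
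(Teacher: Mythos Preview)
Your sketch is correct in substance, but you should know that the paper's own proof of this lemma is purely citational: it consists of four sentences, attributing~\eqref{eq:3} to~\cite[Theorem~1]{Kruzkov}, the BV bound~\eqref{eq:stab-1-BV-est} to~\cite[Theorem~2.2]{magali-improved}, the time-continuity~\eqref{eq:estimate-in-t} to~\cite[Corollary~2.4]{magali-improved}, and the stability estimate to~\cite[Proposition~2.9]{magali-improved}. No argument is reproduced in the paper itself.

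What you have written is therefore not a different route but rather a reconstruction of the content behind the last citation. Your decomposition of $\partial_u(f_1-f_2)$ and of $q_1\div V_1 - q_2\div V_2$ is the right one, your identification of $\kappa$ as the Gr\"onwall rate is correct, and the elementary integral you display does produce the coefficient $(\kappa_o e^{\kappa_o t}-\kappa e^{\kappa t})/(\kappa_o-\kappa)$. The one step that is a bit compressed is ``pull out $\norma{\psi}_{\L1([0,t])}$'': what makes this legitimate is that after inserting $\tv(\rho_1(\tau))\leq B\,e^{\kappa_o\tau}$ you bound $\int_0^s B\,e^{\kappa_o\tau}\psi(\tau)\,\d\tau \leq B\,e^{\kappa_o s}\norma{\psi}_{\L1([0,t])}$ for each $s\leq t$, and it is this monotone-in-$s$ majorant that you feed into the sharp form of Lemma~\ref{lem:Gronwall}. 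With that clarified, the computation closes exactly as you say. The advantage of your approach is self-containment; the paper's buys brevity by outsourcing the analysis.
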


\begin{proof}
  The equality~\eqref{eq:3} directly follows from~\textbf{(q)}
  and~\cite[Theorem~1]{Kruzkov}. The total variation
  bound~\eqref{eq:stab-1-BV-est} follows
  from~\cite[Theorem~2.2]{magali-improved}. Estimate~\eqref{eq:estimate-in-t}
  follows from~\cite[Corollary~2.4]{magali-improved}. The stability
  estimate follows from~\cite[Proposition~2.9]{magali-improved}.
\end{proof}

\begin{lemma}
  \label{lem:uniform-p-estimate}
  Assume that~\textbf{(F)} and~\textbf{($\boldsymbol{\mathcal{B}}$)}
  hold. Fix $p_o \in \reali^m$ and $r \in \C0\left(\reali^+;
    \L1(\reali^d; \reali^n)\right)$.  Then, problem
  \begin{displaymath}
    \left\{
      \begin{array}{l}
        \dot p
        =
        F\left(t, p, \left(\mathcal{B}(r)\right) (p)\right),
        \\
        p(0) = p_o,
      \end{array}
    \right.
  \end{displaymath}
  admits a unique Caratheodory solution $p \in \W11 (\reali^+;
  \reali^m)$ and for every $t>0$
  \begin{equation}
    \label{eq:uniform-p-estimate}
    \norma{p(t)}_{\reali^m}
    \leq
    \left(
      \norma{p_o}_{\reali^m}
      +
      \int_0^t C_F(s)
      \left(1 + L_B \, \norma{r (s)}_{\L1} \right)
      \d{s}
    \right)
    \exp \left(\int_0^t C_F(s)\d{s}\right) \,.
  \end{equation}
  If $T > 0$, $p_o^1,p_o^2 \in \reali^m$ and $r_1,r_2 \in
  \C0\left(\reali^+; \L1 (\reali^d; \reali^n)\right)$, calling
  $p_1,p_2$ the solutions to
  \begin{displaymath}
    \left\{
      \begin{array}{l}
        \dot p
        =
        F_1\left(t, p, \left(\mathcal{B}(r_1)\right) (p)\right),
        \\
        p(0) = p_o^1,
      \end{array}
    \right.
    \qquad\qquad
    \left\{
      \begin{array}{l}
        \dot p
        =
        F_2\left(t, p, \left(\mathcal{B}(r_2)\right) (p)\right),
        \\
        p(0) = p_o^2,
      \end{array}
    \right.
  \end{displaymath}
  for every $t \in [0,T]$ the following estimate holds
  \begin{equation}
    \label{eq:p}
    \begin{array}{rcl}
      \norma{p_1 (t) - p_2 (t)}_{\reali^m}
      & \leq &
      \displaystyle
      \left[
        \norma{p_o^1-p_o^2}_{\reali^m}
        +
        t \norma{F_1 - F_2}_{\L\infty}
        +
        L_B \norma{r_1 - r_2}_{\L1 ([0,t];\L1)}
      \right]
      \\
      & &
      \displaystyle
      \qquad \times
      e^{L_F (t + L_B \norma{r_1}_{\L1 ([0,t];\L1)})} .
    \end{array}
  \end{equation}
\end{lemma}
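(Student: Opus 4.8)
The plan is to recast the problem as a classical Caratheodory ODE, as in \cite[\S~1]{Filippov} and \cite[Chapter~2]{bressan-piccoli-book}, and then to extract both estimates from the Gr\"onwall inequality of Lemma~\ref{lem:Gronwall}.

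First I would set $\mathcal{F}(t,p) = F\left(t, p, \left(\mathcal{B}(r(t))\right)(p)\right)$ and verify that it fits the Caratheodory framework. Since $r \in \C0(\reali^+; \L1)$ and $\mathcal{B}$ is Lipschitz by ($\boldsymbol{\mathcal{B}}$), the map $t \mapsto \mathcal{B}(r(t))$ is continuous with values in $\W1\infty$; because convergence in $\W1\infty$ forces pointwise convergence, $t \mapsto \left(\mathcal{B}(r(t))\right)(p)$ is continuous, hence measurable, and composing with $F$ (which is Caratheodory by the first three items of \textbf{(F)}) gives measurability of $t \mapsto \mathcal{F}(t,p)$. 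For fixed $t$, item~4 of \textbf{(F)} together with the bound $\norma{\left(\mathcal{B}(r(t))\right)(p_1) - \left(\mathcal{B}(r(t))\right)(p_2)}_{\reali^\ell} \leq \norma{\mathcal{B}(r(t))}_{\W1\infty} \norma{p_1-p_2}_{\reali^m} \leq L_B \norma{r(t)}_{\L1} \norma{p_1-p_2}_{\reali^m}$ shows that $p \mapsto \mathcal{F}(t,p)$ is locally Lipschitz with a constant locally bounded in $t$, since $\norma{r(\cdot)}_{\L1}$ is. This yields local existence and uniqueness. Global existence on $\reali^+$ and the regularity $p \in \W11$ follow from the sublinear growth furnished by item~5 of \textbf{(F)} and $\norma{\left(\mathcal{B}(r(t))\right)(p)}_{\reali^\ell} \leq L_B \norma{r(t)}_{\L1}$, namely $\norma{\mathcal{F}(t,p)}_{\reali^m} \leq C_F(t)\left(1 + \norma{p}_{\reali^m} + L_B \norma{r(t)}_{\L1}\right)$, which rules out finite-time blow-up.

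For the a priori bound I would pass to the integral form $p(t) = p_o + \int_0^t \mathcal{F}(s,p(s))\,\d{s}$ and insert the growth estimate to get $\norma{p(t)}_{\reali^m} \leq \alpha(t) + \int_0^t C_F(s)\norma{p(s)}_{\reali^m}\,\d{s}$ with $\alpha(t) = \norma{p_o}_{\reali^m} + \int_0^t C_F(s)\left(1 + L_B\norma{r(s)}_{\L1}\right)\d{s}$. As $\alpha$ is nondecreasing, Lemma~\ref{lem:Gronwall} with $\beta = C_F$ yields exactly \eqref{eq:uniform-p-estimate}.

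For the stability estimate, fix $T$ and use \eqref{eq:uniform-p-estimate} to confine both trajectories to a common compact $K$ on $[0,T]$, fixing the associated $L_F$ from item~4 of \textbf{(F)}. Subtracting the two integral representations and adding and subtracting $F_1(s, p_2, b_2)$, where $b_i = \left(\mathcal{B}(r_i(s))\right)(p_i(s))$, isolates the contribution $\norma{F_1-F_2}_{\L\infty}$ and an $F_1$-Lipschitz term bounded by $L_F\left(\norma{p_1-p_2}_{\reali^m} + \norma{b_1-b_2}_{\reali^\ell}\right)$; by ($\boldsymbol{\mathcal{B}}$) the $\mathcal{B}$-difference splits into a part $\leq L_B\norma{r_1(s)}_{\L1}\norma{p_1(s)-p_2(s)}_{\reali^m}$, which feeds the Gr\"onwall kernel, and a part $\leq L_B\norma{r_1(s)-r_2(s)}_{\L1}$, which enters the forcing. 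Collecting terms gives $\norma{p_1(t)-p_2(t)}_{\reali^m} \leq \alpha(t) + \int_0^t L_F\left(1 + L_B\norma{r_1(s)}_{\L1}\right)\norma{p_1(s)-p_2(s)}_{\reali^m}\,\d{s}$, with $\alpha$ the nondecreasing sum of $\norma{p_o^1-p_o^2}_{\reali^m}$, the $t\norma{F_1-F_2}_{\L\infty}$ term and the $\norma{r_1-r_2}_{\L1([0,t];\L1)}$ term; a last application of Lemma~\ref{lem:Gronwall} produces \eqref{eq:p}.

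I expect the main obstacle to lie in the first paragraph, namely the rigorous verification that the composed field $\mathcal{F}$ is Caratheodory — in particular the measurability in $t$ of $t \mapsto F\left(t, p, \left(\mathcal{B}(r(t))\right)(p)\right)$, which requires combining the continuity of $t \mapsto \mathcal{B}(r(t))$ in $\W1\infty$, and hence pointwise, with the Caratheodory structure of $F$. Once the field is shown to have a locally integrable Lipschitz constant and sublinear growth, both estimates reduce to routine Gr\"onwall bookkeeping, the only care being to keep $\alpha$ monotone so that the supremum in Lemma~\ref{lem:Gronwall} collapses to $\alpha(t)$.
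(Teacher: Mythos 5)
Your proposal is correct and follows essentially the same route as the paper: existence and uniqueness via the Caratheodory framework (the paper simply cites a standard ODE theorem where you verify the hypotheses explicitly), the a priori bound by inserting the growth condition \textbf{(F)}.5 and \textbf{($\boldsymbol{\mathcal{B}}$)} into the integral form and applying Lemma~\ref{lem:Gronwall}, and the stability estimate by confining both trajectories to a compact set, splitting the difference of the vector fields by the triangle inequality into an $\norma{F_1-F_2}_{\L\infty}$ term, an $\L1$-difference of the $r_i$, and a Gr\"onwall kernel $L_F\left(1+L_B\norma{r_1(s)}_{\L1}\right)$. Your two-term decomposition through $F_1(s,p_2,b_2)$ is marginally more compact than the paper's four-term one but yields the same inequality, so no further comment is needed.
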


\begin{proof}
  The existence and uniqueness of the solution follows, for instance,
  from~\cite[Theorem~2.1.1]{bressan-piccoli-book}. Moreover,
  by~\textbf{(F)} and~\textbf{($\boldsymbol{\mathcal{B}}$)},
  \begin{eqnarray*}
    \norma{p(t)}_{\reali^m}
    & = &
    \norma{
      p_o
      +
      \int_0^t
      F\left(
        s, p(s), \left(\mathcal{B}(r_1(s))\right) \left(p(s)\right)
      \right)
      \d{s}}_{\reali^m}
    \\
    & \leq &
    \norma{p_o}_{\reali^m}
    +
    \int_0^t C_F(s)
    \left[
      1
      +
      \norma{p(s)}_{\reali^m}
      +
      \norma{\left(\mathcal{B}(r_1(s))\right) \left(p(s)\right)}_{\reali^\ell}
    \right] \d{s}
    \\
    & \leq &
    \norma{p_o}_{\reali^m}
    +
    \int_0^t C_F(s)
    \left(1 + L_B \, \norma{r_1 (s)}_{\L1} \right)
    \d{s}
    +
    \int_0^t C_F(s) \norma{p(s)}_{\reali^m} \d{s}.
  \end{eqnarray*}
  By Lemma~\ref{lem:Gronwall}, we
  deduce~\eqref{eq:uniform-p-estimate}. To prove the stability
  estimate, first note that, given $T>0$,
  by~\eqref{eq:uniform-p-estimate} there exists a compact set $K
  \subseteq \reali^m$ such that $p_1(t), p_2(t) \in K$ for every $t
  \in [0, T]$. Denote with $L_F$ the constant related to $K$
  in~\textbf{(F)}.  Using~\textbf{(F)}
  and~\textbf{($\boldsymbol{\mathcal{B}}$)} we get
  \begin{eqnarray*}
    & &
    \norma{p_1 (t) - p_2 (t)}_{\reali^m}
    \\
    & \leq &
    \norma{p_o^1-p_o^2}_{\reali^m}
    \\
    & &
    +
    \int_0^t
    \norma{
      F_1\left(
        \tau,p_1 (\tau),
        \left(\mathcal{B} (r_1)\right) \left(p_1 (\tau)\right)\right)
      -
      F_2\left(
        \tau,p_1 (\tau),
        \left(\mathcal{B} (r_1)\right) \left(p_1 (\tau)\right)\right)
    }_{\reali^m}
    \d\tau
    \\
    & &
    +
    \int_0^t
    \norma{
      F_2\left(
        \tau,p_1 (\tau),\left(\mathcal{B} (r_1)\right)
        \left(p_1 (\tau)\right)\right)
      -
      F_2\left(
        \tau,p_1 (\tau),\left(\mathcal{B} (r_1)\right)
        \left(p_2 (\tau)\right)\right)
    }_{\reali^m}
    \d\tau
    \\
    & &
    +
    \int_0^t
    \norma{
      F_2\left(
        \tau,p_1 (\tau),\left(\mathcal{B} (r_1)\right)
        \left(p_2 (\tau)\right)\right)
      -
      F_2\left(
        \tau,p_1 (\tau),\left(\mathcal{B} (r_2)\right)
        \left(p_2 (\tau)\right)\right)
    }_{\reali^m}
    \d\tau
    \\
    & &
    +
    \int_0^t
    \norma{
      F_2\left(
        \tau,p_1 (\tau),\left(\mathcal{B} (r_2)\right)
        \left(p_2 (\tau)\right)\right)
      -
      F_2\left(
        \tau,p_2 (\tau),\left(\mathcal{B} (r_2)\right)
        \left(p_2 (\tau)\right)\right)
    }_{\reali^m}
    \d\tau
    \\
    & \leq &
    \norma{p_o^1-p_o^2}_{\reali^m} + t \norma{F_1 - F_2}_{\L\infty}
    \\
    & &
    +
    L_F \int_0^t
    \norma{
      \left(\mathcal{B}\left(r_1 (\tau)\right)\right) \left(p_1 (\tau)\right)
      -
      \left(\mathcal{B}\left(r_1 (\tau)\right)\right) \left(p_2 (\tau)\right)
    }_{\reali^\ell}
    \d\tau
    \\
    & &
    +
    L_F \int_0^t
    \norma{
      \left(\mathcal{B}\left(r_1 (\tau)\right)\right) \left(p_2 (\tau)\right)
      -
      \left(\mathcal{B}\left(r_2 (\tau)\right)\right) \left(p_2 (\tau)\right)
    }_{\reali^\ell}
    \d\tau
    \\
    & &
    +
    L_F \int_0^t
    \norma{p_1 (\tau) - p_2 (\tau)}_{\reali^m}
    \d\tau
    \\
    & \leq &
    \norma{p_o^1-p_o^2}_{\reali^m}
    +    t \norma{F_1 - F_2}_{\L\infty}
    +
    L_B \int_0^t\norma{r_1 (\tau) - r_2 (\tau)}_{\L1} \d\tau
    \\
    & &
    +
    L_F \int_0^t \left(1+ L_B \norma{r_1 (\tau)}_{\L1} \right)
    \norma{p_1 (\tau) - p_2 (\tau)}_{\reali^m}
    \d\tau \,.
  \end{eqnarray*}
  Apply Lemma~\ref{lem:Gronwall} to complete the proof.
\end{proof}

\begin{proofof}{Theorem~\ref{thm:main}}
  The proof is divided in various steps.

  \paragraph{Introduction of $X$ and $\mathcal{T}$.}
  Fix the initial data $(\rho_o,p_o) \in (\L1 \cap \BV) (\reali^d;
  [0,R]^n) \times \reali^m$ and a positive $T < 1$. For positive
  $\Delta_\rho, \Delta_p$, define the closed balls
  \begin{displaymath}
    B_\rho
    =
    \left\{
      \rho \in \L1 (\reali^d; [0,R]^n) \colon
      \norma{\rho - \rho_o}_{\L1} \leq \Delta_\rho
    \right\}
    \quad \mbox{ and } \quad
    B_p
    =
    \left\{
      p \in \reali^m \colon
      \norma{p-p_o}_{\reali^m} \leq \Delta_p
    \right\}
  \end{displaymath}
  and the space
  \begin{displaymath}
    X = \C0 ([0,T]; B_\rho \times B_p)
  \end{displaymath}
  which is a complete metric space with the distance
  \begin{displaymath}
    d \left(\left(\rho_1, p_1\right), \left(\rho_2, p_2\right)\right)
    =
    \sup_{t \in [0,T]}
    \norma{\rho_1(t) - \rho_2(t)}_{\L1}
    +
    \sup_{t \in [0,T]}
    \norma{p_1(t) - p_2(t)}_{\reali^m} .
  \end{displaymath}
  Consider the function $\mathcal{T} \colon X \to X$, with
  $\mathcal{T}(r,\pi) = (\rho, p)$, if $(\rho, p)$ is the solution to
  \begin{equation}
    \label{eq:approx-system}
    \left\{
      \begin{array}{l@{\qquad}l}
        \partial_t \rho^i
        +
        \div
        \left[
          q^i(\rho^i) \;v^i\left(t,x, \mathcal{A}^i(r), \pi\right)
        \right] = 0, &
        i \in \left\{1, \ldots, n\right\},
        \\
        \dot p = F \left(t, p, \left(\mathcal{B}(r)\right) (p)\right),
        \\
        \rho^i(0,x) = \rho_o^i(x), &
        i \in \left\{1, \ldots, n\right\},
        \\
        p(0) = p_o \,.
      \end{array}
    \right.
  \end{equation}
  In the spirit of Definition~\ref{def:solution-1}, here by
  \emph{solution} we mean that $(\rho,p) \in \C0 ([0,T]; \L1
  (\reali^d; \reali^n)\times \reali^m)$ satisfies $(\rho,p) (0) =
  (\rho_o,p_o)$ and for all $i=1, \ldots, n$, the following inequality
  holds
  \begin{equation}
    \label{eq:solRho}
    \begin{array}{@{}l@{}}
      \displaystyle
      \int_0^T \int_{\reali^d}
      \sgn\left(\rho^i (t,x) -k\right)
      \Big[
      \left(\rho^i (t,x) -k\right) \partial_t \phi (t,x)
      \\
      \displaystyle
      \qquad
      +
      \left(q^i \left(\rho^i (t,x)\right) - q^i (k)\right)
      v^i\left(
        t,
        x,
        \left(\mathcal{A}^i\left(r (t)\right)\right) (x),
        \pi (t)
      \right)
      \nabla_x \phi (t,x)
      \Big]
      \d{x} \d{t}
      \geq 0
    \end{array}
  \end{equation}
  for all $\phi \in \Cc1 (\left]0,T\right[\times\reali^d; \reali^+)$
  and for all $k \in \reali$; while, for the $p$ component,
  \begin{equation}
    \label{eq:solP}
    p (t)
    =
    p_o
    +
    \int_0^t
    F\left(
      \tau,
      p (\tau),
      \left(\mathcal{B}\left(r (\tau)\right)\right)\left(p (\tau)\right)
    \right)
    \d\tau
  \end{equation}
  for $t \in [0,T]$.  Lemma~\ref{lem:stab-1} and
  Lemma~\ref{lem:uniform-p-estimate} ensure
  that~\eqref{eq:approx-system} admits a unique solution.

  \paragraph{$\mathcal{T}$ is well defined.}

  To bound the $p$ component, we use~\textbf{(F)},
  \textbf{($\boldsymbol{\mathcal{B}}$)}
  and~\eqref{eq:uniform-p-estimate}
  \begin{eqnarray*}
    & &
    \norma{p (t) - p_o}_{\reali^m}
    \\
    & \leq &
    \int_0^t
    \norma{
      F\left(
        s, p (s), \left(\mathcal{B} \left(r(s)\right) \right) \left(p (s)\right)
      \right)}_{\reali^m}
    \d{s}
    \\
    & \leq &
    \int_0^t C_F (s)
    \left(
      1
      +
      \norma{p (s)}_{\reali^m}
      +
      \norma{
        \left(\mathcal{B} \left(r(s)\right) \right) \left(p (s)\right)
      }_{\reali^\ell}
    \right)
    \d{s}
    \\
    & \leq &
    \int_0^t C_F (s)
    \left(
      1
      +
      \norma{p (s)}_{\reali^m}
      +
      L_B \norma{r (s)}_{\L1}
    \right)
    \d{s}
    \\
    & \leq &
    \left(1 + L_B \left(\norma{\rho_o}_{\L1} + \Delta_\rho \right)\right)
    \int_0^t C_F (s) \d{s}
    \\
    & &
    +
    \int_0^t
    C_F (s)
    \left(
      \norma{p_o}_{\reali^m}
      +
      \left(1 + L_B \left(\norma{\rho_o}_{\L1} + \Delta_\rho \right) \right)
      \int_0^s C_F(\tau) \d{\tau}
    \right)
    e^{\int_0^s C_F(\tau) \d{\tau}}
    \d{s}
  \end{eqnarray*}
  and the latter term above can be made smaller than $\Delta_p$ if $T$
  is sufficiently small.

  To obtain similar estimates for the $\rho$ component, we set $V
  (t,x) = v^i\left(t,x,\mathcal{A}^i (r),\pi\right)$ for $i=1, \ldots,
  n$ and compute 
  \begin{eqnarray}
    \nonumber
    \div V(t,x)
    & = &
    \sum_{j=1}^d
    \partial_{x_j} v_j^i
    \left(
      t,
      x,
      \left(\mathcal{A}^i \left(r(t)\right)\right)(x),
      \pi(t)
    \right)
    \\
    \nonumber
    & &
    +
    \sum_{j,h=1}^d
    \nabla_{A_h} v_j^i
    \left(
      t,
      x,
      \left(\mathcal{A}^i \left(r(t)\right)\right) (x),
      \pi(t)
    \right)
    \;
    \partial_{x_j} \left(\mathcal{A}^i_h \left(r(t)\right)\right) (x)
    \\
    \nonumber
    & = &
    \nabla_x \cdot
    v^i
    \left(
      t,
      x,
      \left(\mathcal{A}^i \left(r(t)\right)(x)\right),
      \pi(t)
    \right)
    \\
    \label{eq:divV}
    & &
    +
    \partial_{A} v^i
    \left(
      t,
      x,
      \left(\mathcal{A}^i \left(r(t)\right)\right)(x),
      \pi(t)
    \right)
    \cdot
    \nabla_x \left(\mathcal{A}^i\left(r(t)\right)\right)(x),
    \\
    \nonumber
    \nabla_x \div V\left(t,x\right)
    & = &
    \nabla_x \nabla_x
    \cdot v^i
    \left(
      t,
      x,
      \left(\mathcal{A}^i \left(r(t)\right)\right)(x),
      \pi(t)
    \right)
    \\
    \nonumber
    & &
    +
    2\nabla_x \cdot \nabla_A v^i
    \left(
      t,
      x,
      \left(\mathcal{A}^i \left(r(t)\right)\right)(x),
      \pi(t)
    \right)
    \cdot
    \nabla_x \left(\mathcal{A}^i \left(r(t)\right)\right)(x)
    \\
    \nonumber
    & &
    +
    \nabla^2_A v^i\left(
      t,
      x,
      \left(\mathcal{A}^i \left(r(t)\right)\right)(x),
      \pi(t)
    \right)
    \cdot
    \left[\nabla_x
      \left(\mathcal{A}^i \left(r(t)\right)\right) (x)
    \right]^2
    \\
    \nonumber
    & &
    +
    \nabla_{A} v^i
    \left(
      t,
      x,
      \left(\mathcal{A}^i \left(r(t)\right)\right)(x),
      \pi(t)
    \right)
    \cdot
    \nabla^2_x \left(\mathcal{A}^i \left(r(t)\right)\right)(x),
  \end{eqnarray}
  and by~\textbf{(v)} and~\textbf{($\boldsymbol{\mathcal{A}}$)},
  setting $K = \overline{B (p_o, \Delta_p)}$,
  \begin{eqnarray}
    \label{eq:prima}
    \int_0^{t} \int_{\reali^d}
    \modulo{\div V(s,x)} \d{x}\, \d{s}
    & \leq &
    \norma{\mathcal{C}_K}_{\L1} t
    +
    L_A \norma{\mathcal{C}_K}_{\L\infty} \int_0^t \norma{r (s)}_{\L1} \d{s}
    \\
    \label{eq:questa}
    & \leq &
    t
    \left(
      \norma{\mathcal{C}_K}_{\L1}
      +
      L_A \norma{\mathcal{C}_K}_{\L\infty}
      \left(\norma{\rho_o}_{\L1} + \Delta_\rho\right)
    \right),
  \end{eqnarray}
  \begin{eqnarray}
    \nonumber
    & &
    \int_0^t \int_{\reali^d}
    \norma{\nabla_x \div V(\tau,x)}_{\reali^d} \d{x} \d\tau
    \\
    \nonumber
    & \leq &
    \norma{\mathcal{C}_K}_{\L1} t
    +
    2 L_A t \norma{\mathcal{C}_K}_{\L\infty}
    \left(\norma{\rho_o}_{\L1} + \Delta_\rho\right)
    +
    L_A^2 t \norma{\mathcal{C}_K}_{\L\infty}
    \left( \norma{\rho_o}_{\L1} + \Delta_\rho\right)^2
    \\
    & &
    + \nonumber
    L_A t \norma{\mathcal{C}_K}_{\L\infty}
    \left(\norma{\rho_o}_{\L1} + \Delta_\rho\right)
    \\
    \label{eq:quella}
    & = &
    \left(
      \norma{\mathcal{C}_K}_{\L1}
      +
      3 L_A \norma{\mathcal{C}_K}_{\L\infty}
      \left(\norma{\rho_o}_{\L1} + \Delta_\rho\right)
      +
      L_A^2 \norma{\mathcal{C}_K}_{\L\infty}
      \left( \norma{\rho_o}_{\L1} + \Delta_\rho \right)^2
    \right) t.
  \end{eqnarray}
  Proceeding to the $\rho$ component, using~\eqref{eq:estimate-in-t}
  and~\eqref{eq:stab-1-BV-est} together with~\eqref{eq:questa}
  and~\eqref{eq:quella} above
  \begin{eqnarray*}
    & & \norma{\rho (t) - \rho_o}_{\L1}
    \\
    & \leq &
    \int_0^{t} \int_{\reali^d}
    \norma{q}_{\L\infty}
    \modulo{\div V(s,x)} \d{x}\, \d{s}
    +
    t \norma{q'}_{\L\infty} \norma{V}_{\L\infty}
    \sup_{\tau \in [0, t]} \tv \left(\rho(\tau, \cdot)\right)
    \\
    & \leq &
    \norma{q}_{\L\infty}
    \int_0^{t} \int_{\reali^d}
    \modulo{\div V(s,x)} \d{x}\, \d{s}
    \\
    & &
    +
    t \norma{q'}_{\L\infty} \norma{V}_{\L\infty}
    \left(\tv\left(\rho_o\right) + d \, W_d
      \norma{q}_{\L{\infty}\left([0,R]\right)} \int_0^t \int_{\reali^d}
      \norma{\nabla_x \div V(\tau,x)}_{\reali^d} \d{x} \d\tau\right)
    e^{\kappa_o\, t}
    \\
    & \leq &
    t \, \norma{q}_{\L\infty}
    \left(
      \norma{\mathcal{C}_K}_{\L1}
      +
      L_A \norma{\mathcal{C}_K}_{\L\infty}
      \left(\norma{\rho_o}_{\L1} + \Delta_\rho\right)
    \right)
    +
    t \norma{\mathcal{C}_K}_{\L\infty} \norma{q'}_{\L\infty}
    \tv (\rho_o) e^{\kappa_o t}
    \\
    & &
    +
    t^2 \norma{\mathcal{C}_K}_{\L\infty} \norma{q'}_{\L\infty}
    d W_d \norma{q}_{\L\infty}
    \\
    &&
    \times
    \left(
      \norma{\mathcal{C}_K}_{\L1}
      +
      3 L_A \norma{\mathcal{C}_K}_{\L\infty}
      \left(\norma{\rho_o}_{\L1} + \Delta_\rho\right)
      +
      L_A^2 \norma{\mathcal{C}_K}_{\L\infty}
      \left( \norma{\rho_o}_{\L1} + \Delta_\rho\right)^2
    \right)
    e^{\kappa_o t},
  \end{eqnarray*}
  where
  \begin{eqnarray}
    \nonumber
    \kappa_o
    & = &
    (2d + 1) \norma{q'}_{\L\infty}
    \norma{\nabla_x V}_{\L\infty}
    \\
    \nonumber
    & = &
    (2d + 1) \norma{q'}_{\L\infty}
    \norma{
      \nabla_x v^i
      +
      \nabla_A v^i \cdot \nabla_x \mathcal{A}^i\left(r(t)\right)
    }_{\L\infty}
    \\
    \label{eq:PrimaDikappat}
    & \leq &
    (2d + 1) \norma{q'}_{\L\infty}
    \left[
      \norma{\mathcal{C}_K}_{\L\infty}
      + \norma{\mathcal{C}_K}_{\L\infty} L_A \norma{r(t)}_{\L1}
    \right]
    \\
    \label{eq:kappat}
    & \leq &
    (2d + 1) \norma{\mathcal{C}_K}_{\L\infty} \norma{q'}_{\L\infty}
    \left[1 + L_A \left(\norma{\rho_o}_{\L1} + \Delta_\rho\right)\right].
  \end{eqnarray}
  Hence, for $T$ sufficiently small, also $\norma{\rho
    (t)-\rho_o}_{\L1} \leq \Delta_\rho$ completing the proof that
  $\mathcal{T}$ is well defined.

  \paragraph{Notation.} In the sequel, for notational simplicity, we
  introduce the Landau symbol $\O$ to denote a bounded quantity,
  possibly dependent on $T$ and on the constants in~\textbf{(v)},
  \textbf{(F)}, \textbf{($\boldsymbol{\mathcal{A}}$)},
  \textbf{($\boldsymbol{\mathcal{B}}$)} and~\textbf{(q)}.

  \paragraph{$\mathcal{T}$ is a contraction.}
  Fix $(r_1, \pi_1), (r_2, \pi_2) \in X$ and denote $(\rho_i, p_i) =
  \mathcal{T} (r_i, \pi_i)$. We now estimate
  $d\left((\rho_1,p_1),(\rho_2,p_2)\right)$. Consider first the $p$
  component. Using Lemma~\ref{lem:uniform-p-estimate} we get
  \begin{eqnarray*}
    \norma{p_1 (t) - p_2 (t)}_{\reali^m}
    & \leq &
    L_B \, \norma{r_1 - r_2}_{\L1 ([0,t];\L1)} \,
    e^{L_F (t + L_B \norma{r_1}_{\L1 ([0,t];\L1)})}
    \\
    & \leq &
    L_B \, t \, \norma{r_1 - r_2}_{\C0 ([0,t];\L1)} \,
    e^{L_F t (1 + L_B (\norma{\rho_o}_{\L1} + \Delta_\rho))}
    \\
    & = &
    \O\,t \, \norma{r_1 - r_2}_{\C0 ([0,t];\L1)}.
  \end{eqnarray*}
  Apply now Lemma~\ref{lem:stab-1} with $\rho_o^1 = \rho_o =
  \rho_o^2$, $q_1 = q = q_2$, $V_j^i (t,x) = v^i\left(t, x,
    \mathcal{A}^i(r_j(t)(x), \pi_j(t)\right)$ for $i = 1, \ldots, n$
  and $j=1,2$. Equality~\eqref{eq:divV} and~\textbf{(v)} allow to
  bound $\kappa$ in~\eqref{eq:kappakappa} as follows
  \begin{displaymath}
    \kappa
    \leq
    \norma{q'}_{\L\infty} \norma{\div (V^i_1-V^i_2)}_{\L\infty}
    \leq
    2 \, \norma{\mathcal{C}_K}_{\L\infty} \, \norma{q'}_{\L\infty}
    \left(1 + L_A\left(\norma{\rho_o}_{\L1} + \Delta_\rho\right)\right),
  \end{displaymath}
  which ensures, together with~\eqref{eq:kappat}
  \begin{eqnarray*}
    \frac{\kappa_o e^{\kappa_o t} - \kappa e^{\kappa t}}{\kappa_o - \kappa}
    & \leq &
    \left(
      1
      +
      (2d + 1) \norma{\mathcal{C}_K}_{\L\infty} \norma{q'}_{\L\infty}
      \left[1 + L_A \left(\norma{\rho_o}_{\L1} + \Delta_\rho\right)\right]t
    \right)
    \\
    & & \times
    \exp\left(
      (2d + 1) \norma{\mathcal{C}_K}_{\L\infty} \norma{q'}_{\L\infty}
      \left[1 + L_A \left(\norma{\rho_o}_{\L1} + \Delta_\rho\right)\right]
      t
    \right)
    \\
    & = &
    \O \,.
  \end{eqnarray*}
  Using also~\eqref{eq:quella} we obtain
  \begin{eqnarray*}
    \!\!\!\!\!\!
    & &
    \!\!\!
    \norma{\rho_1^i(t) - \rho_2^i(t)}_{\L1}
    \\
    \!\!\!\!\!\!
    & \leq &
    \!\!\!
    t \frac{\kappa_o e^{\kappa_o t} - \kappa e^{\kappa t}}{\kappa_o - \kappa} \!
    \left[
      \tv(\rho_o)
      +
      d\, W_d  \norma{q}_{\L\infty}
      \norma{\nabla_x \div V_1^i}_{\L1 ([0,t];\L1)}
    \right]\!\!
    \norma{q'}_{\L\infty} \norma{V_1^i - V_2^i}_{\C0([0,t];\L\infty)}
    \\
    \!\!\!\!\!\!
    & &
    \!\!\!
    +
    \norma{q}_{\L\infty}
    \norma{\div (V_1^i - V_2^i)}_{\L1 ([0,t];\L1)}
    e^{\kappa t}
    \\
    \!\!\!\!\!\!
    & = &
    \!\!\!
    t \O
    \left[
      1
      +
      \norma{\nabla_x \div V_1^i}_{\L1 ([0,t];\L1)}
    \right]
    \norma{V_1^i - V_2^i}_{\C0([0,t];\L\infty)}
    +
    \O
    \norma{\div (V_1^i - V_2^i)}_{\L1 ([0,t];\L1)}.
  \end{eqnarray*}
  By~\eqref{eq:quella}, we get $\norma{\nabla_x \div V_1^i}_{\L1 ([0,t];\L1)}
  = \O$. By~\textbf{(v)} and~\textbf{($\boldsymbol{\mathcal{A}}$)}, we
  bound $\norma{V_1^i - V_2^i }_{\L\infty}$
  \begin{eqnarray}
    \nonumber
    \norma{V_1^i - V_2^i }_{\L\infty}
    \!\!\! \!\!\!& \leq & \!\!\!
    \norma{\mathcal{C}_K}_{\L\infty}
    \sup_{\tau \in[0,t],x \in \reali^d}
    \left[
      \norma{
        \mathcal{A}^i(r_1(\tau))(x)
        -
        \mathcal{A}^i(r_2(\tau))(x)
      }_{\reali^{d}}
      +
      \norma{\pi_1(\tau) -\pi_2(\tau)}_{\reali^m}
    \right]
    \\
    \label{eq:VmenoV}
    & \leq &
    L_A t \norma{\mathcal{C}_K}_{\L\infty} \norma{r_1 - r_2}_{\C0([0,t];\L1)}
    +
    \norma{\mathcal{C}_K}_{\L\infty} \norma{\pi_1 - \pi_2}_{\C0}
    \\
    \nonumber
    & = &
    \O
    \left(t \norma{r_1 - r_2}_{\C0([0,t];\L1)} + \norma{\pi_1 - \pi_2}_{\C0}\right) .
  \end{eqnarray}
  To estimate $\norma{\div (V_1^i - V_2^i)}_{\L1 ([0,t];\L1)}$ we
  first deal with $\modulo{\div \left(V_1^i(\tau,x) -
      V_2^i(\tau,x)\right)}$, which can be estimated by~\textbf{(v)}
  and~\textbf{($\boldsymbol{\mathcal{A}}$)}
  \begin{eqnarray*}
    & &
    \modulo{\div \left(V_1^i(\tau,x) - V_2^i(\tau,x)\right)}
    \\
    & \leq&
    \modulo{\nabla_x \cdot v^i\left(\tau, x, \mathcal{A}^i
        \left(r_1(\tau)\right)(x), \pi_1(\tau)\right)
      -
      \nabla_x \cdot v^i\left(\tau, x, \mathcal{A}^i \left(r_2(\tau)\right)(x), \pi_2(\tau)\right)}
    \\
    & &
    +
    \left| \nabla_A v^i\left(\tau, x, \mathcal{A}^i
        \left(r_1(\tau)\right)(x), \pi_1(\tau)\right) \nabla_x
      \mathcal{A}^i \left(r_1(\tau)\right)(x)\right.
    \\
    & &
    \qquad
    \left.  -
      \nabla_{A} v^i\left(\tau, x, \mathcal{A}^i \left(r_2(\tau)\right)(x), \pi_2(\tau)\right) \nabla_x \mathcal
      A^i\left(r_2(\tau)\right)(x) \right|
    \\
    & \leq &
    \norma{\mathcal{C}_K}_{\L\infty}
    \norma{
      \mathcal{A}^i \left(r_1(\tau)\right)(x)
      -
      \mathcal{A}^i \left(r_2(\tau)\right)(x)}_{\reali^d}
    +
    \modulo{\mathcal C_K(x)}\,
    \norma{\pi_1(\tau) -  \pi_2(\tau)}_{\reali^m}
    \\
    & &
    +
    \norma{\mathcal{C}_K}_{\L\infty} \norma{
      \nabla_x \mathcal{A}^i
      (r_1(\tau))(x) - \nabla_x \mathcal{A}^i (r_2(\tau))(x)
    }_{\reali^{2d}}
    \\
    & &
    +
    \norma{\mathcal{C}_K}_{\L\infty}
    \norma{\nabla_x \mathcal{A}^i\left(r_2(\tau)\right)(x)}_{\reali^{d\times d}}
    \,
    \norma{\nabla_x \mathcal{A}^i
      (r_1(\tau))(x) - \mathcal{A}^i (r_2(\tau))(x)
    }_{\reali^{d\times d}}
    \\
    & &
    +
    \norma{\mathcal{C}_K}_{\L\infty}
    \norma{\nabla_x \mathcal{A}^i\left(r_2(\tau)\right)(x)} _{\reali^{d\times d}}
    \, \norma{\pi_1(\tau) - \pi_2(\tau)}_{\reali^m} \,.
  \end{eqnarray*}
  Therefore, using~\textbf{($\boldsymbol{\mathcal{A}}$)},
  \begin{eqnarray}
    \nonumber
    & &
    \norma{\div (V_1^i - V_2^i)}_{\L1([0,t];\L1)}
    \\
    \nonumber
    & \le &
    L_A \norma{\mathcal{C}_K}_{\L\infty}
    \int_0^t \norma{r_1(\tau) - r_2(\tau)}_{\L1} \d\tau
    +
    \norma{\mathcal{C}_K}_{\L1}
    \int_0^t \norma{\pi_1(\tau) - \pi_2(\tau)}_{\reali^m}  \d\tau
    \\
    & &
    + L_A \norma{\mathcal{C}_K}_{\L\infty}
    \int_0^t \norma{r_1(\tau) - r_2(\tau)}_{\L1} \d\tau
    +
    L_A^2 \norma{\mathcal{C}_K}_{\L\infty}
    \int_0^t \norma{r_2(\tau)}_{\L1}
    \norma{r_1(\tau) - r_2(\tau)}_{\L1} \d\tau
    \nonumber
    \\
    \nonumber
    & &
    +
    L_A \norma{\mathcal{C}_K}_{\L\infty}
    \int_0^t \norma{r_2(\tau)}_{\L1}
    \norma{\pi_1(\tau) - \pi_2(\tau)}_{\reali^m} \, \d\tau
    \\
    \nonumber
    & \leq&
    L_A \norma{\mathcal{C}_K}_{\L\infty}
    \left(2 + L_A\left(\norma{\rho_o}_{\L1}
        + \Delta_\rho\right)\right)
    \int_0^t \norma{r_1(\tau) - r_2(\tau)}_{\L1} \d\tau
    \\
    & &
    +
    \left(\norma{\mathcal{C}_K}_{\L1}
      + L_A \norma{\mathcal C_K}_{\L\infty}\left(\norma{\rho_o}_{\L1}
        + \Delta_\rho\right)\right)
    \int_0^t \norma{\pi_1(\tau) - \pi_2(\tau)}_{\reali^m}  \d\tau
    \label{eq:DeltaDiv}
    \\
    \nonumber
    & \le &
    t \, \O \left(
      \norma{r_1 - r_2}_{\C0([0,t];\L1)}
      +
      \norma{\pi_1 - \pi_2}_{\C0}
    \right).
  \end{eqnarray}
  Going back to the $\rho$ components,
  \begin{eqnarray*}
    \!\!\!\!\!\!& &
    \norma{\rho_1^i(t) - \rho_2^i(t)}_{\L1}
    \\
    \!\!\!\!\!\!& \leq &
    t \O \!
    \left[
      1
      +
      \norma{\nabla_x \div V_1^i}_{\L1 ([0,t];\L1)}
    \right] \!
    \norma{V_1^i - V_2^i}_{\C0([0,t];\L\infty)}
    +
    \O
    \norma{\div (V_1^i - V_2^i)}_{\L1 ([0,t];\L1)}
    \\
    \!\!\!\!\!\!& \leq &
    t \, \O \left(
      \norma{r_1 - r_2}_{\C0([0,t];\L1)}
      +
      \norma{\pi_1 - \pi_2}_{\C0}
    \right).
  \end{eqnarray*}
  The above estimate ensures that for $T$ sufficiently small,
  $\mathcal{T}$ is a contraction. Hence, it admits a unique fixed
  point $(\rho_*, p_*)$, defined on $[0,T]$.

  \paragraph{$(\rho_*,p_*)$ is a solution to~\eqref{eq:1} on $[0,T]$.}
  Writing that $(\rho_*,p_*)$ is a fixed point for $\mathcal{T}$
  in~\eqref{eq:solRho} and~\eqref{eq:solP} shows that $(\rho_*, p_*)$
  solves~\eqref{eq:1} in the sense of Definition~\ref{def:solution-1}
  on $[0,T]$.

  \paragraph{Global uniqueness.}  Consider two solutions
  \begin{displaymath}
    \left(\rho_j, p_j\right)
    \in
    \C0 \left([0,T_j];
      \L1 (\reali^d; [0,R]^n) \times \reali^m
    \right),
    \quad \mbox{ for } j = 1,2,
  \end{displaymath}
  to~\eqref{eq:1} in the sense of Definition~\ref{def:solution-1},
  corresponding to the same initial datum $(\rho_o,p_o) \in (\L1
    \cap \BV) (\reali^d; [0,R]^n) \times \reali^m$.  Define
  \begin{equation*}
    T^\ast = \sup\left\{t \in [0,\min\{T_1,T_2\}]:\, \left(\rho_1, p_1\right)(s)
      = \left(\rho_2, p_2\right)(s)\textrm{ for all } s \in [0,t]\right\}.
  \end{equation*}
  Clearly, $T^\ast \ge 0$ and $\left(\rho_1, p_1\right)(T^\ast) =
  \left(\rho_2, p_2\right)(T^\ast)$.

  Assume by contradiction that $T^\ast < \min\{T_1,T_2\}$ and define
  $\left(\rho^\ast, p^\ast\right) = \left(\rho_1, p_1\right)(T^\ast) =
  \left(\rho_2, p_2\right)(T^\ast)$.  The previous steps, which can be
  applied thanks to the bound~\eqref{eq:stab-1-BV-est}, ensure the
  local existence to problem~\eqref{eq:1} with datum $(\rho^\ast,
  p^\ast)$ assigned at time $T^*$. Hence, $(\rho_1, p_1)(t) = (\rho_2,
  p_2)(t)$ on a full neighborhood of $T^*$. This contradicts the
  assumption $T^\ast < \min\{T_1, T_2\}$, proving global uniqueness.

  \paragraph{$\BV$ estimate on $\rho$ and $\L\infty$ estimate on $p$.}
  Let $(\rho,p)$ be the solution
  to~\eqref{eq:1}. By~\eqref{eq:uniform-p-estimate} and~\eqref{eq:3},
  \begin{equation}
    \label{eq:pInfty}
    \norma{p(t)}_{\reali^m}
    \leq
    \left(
      \norma{p_o}_{\reali^m}
      +
      \left(1 + L_B \, \norma{\rho_o}_{\L1} \right)
      \int_0^t C_F(s) \d{s}
    \right)
    \exp \int_0^t C_F(s)\d{s} \,.
  \end{equation}
  Call $K_t$ the closed ball in $\reali^m$ with radius $ \left[
    \norma{p_o}_{\reali^m} + \left(1 + L_B \, \norma{\rho_o}_{\L1}
    \right) \int_0^t C_F(s) \d{s} \right] e^{\int_0^t
    C_F(s)\d{s}}$. By~\eqref{eq:stab-1-BV-est} and observing that
  $\norma{\rho(t)}_{\L1} = \norma{\rho_o}_{\L1}$ for every $t > 0$
  \begin{eqnarray*}
    \tv \left(\rho(t)\right)
    & \leq &
    \tv\left(\rho_o\right)  e^{\kappa_t\, t}
    \\
    & + &
    d \, W_d
    \norma{q}_{\L\infty}
    t
    \left(
      \norma{\mathcal{C}_{K_t}}_{\L1}
      +
      3 L_A \norma{\mathcal{C}_{K_t}}_{\L\infty} \norma{\rho_o}_{\L1}
      +
      L_A^2 \norma{\mathcal{C}_{K_t}}_{\L\infty}
      \left( \norma{\rho_o}_{\L1} \right)^2
    \right)
    e^{\kappa_t t},
  \end{eqnarray*}
  where, by~\eqref{eq:kappat},
  \begin{displaymath}
    \kappa_t
    =
    (2d+1) \norma{\mathcal{C}_{K_t}}_{\L\infty} \norma{q'}_{\L\infty}
    (1+ L_A \norma{\rho_o}_{\L1}) \,.
  \end{displaymath}
  Hence, $\rho (t) \in \BV (\reali^d; [0,R]^n)$ as long as $\rho$ is
  defined.

  \paragraph{$\rho$ is Lipschitz continuous in time.} Let $(\rho,p)$
  be the solution to~\eqref{eq:1}. Fix $t_1,t_2$ with $t_1 < t_2$
  inside the time interval where $(\rho,p)$ is
  defined. Use~\eqref{eq:estimate-in-t}, \eqref{eq:prima}
  and~\textbf{(v)} to obtain
  \begin{eqnarray*}
    \norma{\rho (t_2) - \rho (t_1)}_{\L1}
    & \leq &
    \displaystyle
    (t_2-t_1) \, \norma{q}_{\L\infty}
    \left(
      \norma{\mathcal{C}_{K_t}}_{\L1}
      +
      L_A \norma{\mathcal{C}_{K_t}}_{\L\infty} \norma{\rho_o}_{\L1}
    \right)
    \\[16pt]
    & &
    \displaystyle
    +
    (t_2 - t_1) \norma{\mathcal{C}_{K_t}}_{\L\infty} \norma{q'}_{\L\infty} \,
    \sup_{\tau \in [0, t_2]} \tv \left(\rho(\tau)\right) \,.
  \end{eqnarray*}
  This estimate, together with the $\BV$ bound above, ensures that on
  any bounded time interval on which it is defined, $\rho$ is
  Lipschitz continuous in time.

  \paragraph{$p$ is uniformly continuous in time.} Let $(\rho,p)$ be
  the solution to~\eqref{eq:1}. Fix $t_1,t_2$ with $t_1 < t_2$ inside
  the time interval where $(\rho,p)$ is
  defined. By~5.~in~\textbf{(F)}, \eqref{eq:pInfty},
  \textbf{($\boldsymbol{\mathcal{B}}$)} and~\eqref{eq:3} we have
  \begin{eqnarray*}
    \!\!\!\!\!\!
    & &
    \norma{p (t_2) - p (t_1)}_{\reali^m}
    \\
    \!\!\!\!\!\!
    & \leq &
    \int_{t_1}^{t_2}
    \norma{
      F\left(
        \tau,
        p (\tau),
        \left(\mathcal{B}\left(\rho (\tau)\right)\right)
        \left(p (\tau)\right)
      \right)}_{\reali^m}
    \d\tau
    \\
    \!\!\!\!\!\!
    & \leq &
    \int_{t_1}^{t_2}
    C_F (\tau)
    \left(1
      +
      \norma{p (\tau)}
      +
      L_B \norma{\rho_o}_{\L1}
    \right)
    \d\tau
    \\
    \!\!\!\!\!\!
    & \leq &
    \int_{t_1}^{t_2}
    C_F (\tau)
    \d\tau
    \left[
      1
      +
      \left(
        \norma{p_o}_{\reali^m}
        +
        \left(1 + L_B \, \norma{\rho_o}_{\L1} \right)
        \int_0^{t_2} C_F(s) \d{s}
      \right)
      e^{\int_0^{t_2} C_F(s)\d{s}}
      +
      L_B \norma{\rho_o}_{\L1}
    \right],
  \end{eqnarray*}
  which shows the uniform continuity of $p$ on any bounded time
  interval.

  \paragraph{Global existence.}  Fix the initial datum $(\rho_o,p_o)
  \in (\L1 \cap \BV) (\reali^d; [0,R]^n) \times \reali^m$.  Define
  \begin{equation*}
    T_* = \sup \left\{
      T > 0 \colon
      \begin{array}{l}
        \exists
        (\rho, p) \in
        \C0 \left([0,T]; \L1 (\reali^d; [0,R]^n) \times \reali^m \right)
        \\
        \left(\rho, p\right)
        \mbox{ solves~\eqref{eq:1} according to Definition~\ref{def:solution-1}}
        \\
        \text{with~} \left(\rho, p\right)(0) = \left(\rho_o, p_o\right)
      \end{array}
    \right\} \,.
  \end{equation*}
  Assume by contradiction that $T_\ast < + \infty$. Then, by the
  existence and uniqueness proved above, there exists a solution
  $(\rho_*,p_*)$ to~\eqref{eq:1} with $(\rho_*,p_*)(0) = (\rho_o,p_o)$
  which is defined on $\left[0, T_*\right[$. By the previous steps,
  the map $t \to (\rho_*,p_*) (t)$ is uniformly continuous on
  $\left[0, T_*\right[$, hence it can be uniquely extended by
  continuity to $[0,T_*]$. Call $(\bar\rho,\bar p) = (\rho_*,p_*)
  (T_*)$. The Cauchy problem consisting of~\eqref{eq:1} with initial
  datum $(\bar\rho, \bar p)$ assigned at time $T_*$ still satisfies
  all conditions to have a unique solution defined also on a right
  neighborhood of $T_*$, which contradicts the choice of $T_*$.

  \paragraph{Continuous dependence from the initial datum.}
  Fix a positive $T$.  For $j = 1, 2$, choose $(\rho_o^j,p_o^j) \in
  (\L1\cap\BV) (\reali^d;[0,R]^m)$ and call $(\rho_j,p_j)$ the
  corresponding solution as constructed above. For any $t \in [0,T]$,
  by~\eqref{eq:p}
  \begin{eqnarray}
    \nonumber
    \norma{p_1(t) - p_2 (t)}_{\reali^m}
    & \leq &
    \left(
      \norma{p_o^1-p_o^2}_{\reali^m}
      +
      L_B \, \norma{\rho_1 - \rho_2}_{\L1 ([0,t];\L1)}
    \right)
    e^{L_F t(1 + L_B \norma{\rho_o^1}_{\L1})}
    \\
    \label{eq:pp}
    & \leq &
    \left(
      \norma{p_o^1-p_o^2}_{\reali^m}
      \!\!\!+
      \O
      \norma{\rho_1 - \rho_2}_{\L1 ([0,t];\L1)}
    \right)
    e^{\O t(1+\norma{\rho_o^1}_{\L1})}.
  \end{eqnarray}
  For $j = 1, 2$ and $i= 1, \ldots, n$, define $V^i_j =
  v_j^i\left(t,x,\left(\mathcal{A}^i (\rho_j \left(t)\right)\right)
    (x), p_j (t)\right)$ and using~\eqref{eq:DeltaDiv},
  \eqref{eq:quella}, \textbf{(v)}
  and~\textbf{($\boldsymbol{\mathcal{A}}$)}, compute preliminary the
  following terms
  \begin{eqnarray}
    \nonumber
    \norma{\div (V_1^i - V_2^i)}_{\L1 ([0,t];\L1)}
    \!\!\!\! & \leq & \!\!\!\!
    \O
    \left(1+\norma{\rho_o^2}_{\L1}\right)
    \left(
      \norma{\rho_1-\rho_2}_{\L1([0,t],\L1)}
      +
      \norma{p_1-p_1}_{\L1}
    \right)\,,
    \\
    \nonumber
    \norma{\nabla_x \div V_1^i}_{\L1 ([0,t];\L1)}
    \!\!\!\! & \leq & \!\!\!\!
    t \, \O
    \left(
      1      +
      \norma{\rho_o^1}_{\L1}
      +
      \left( \norma{\rho_o^1}_{\L1} \right)^2
    \right) \,,
    \\
    \nonumber
    \norma{V_1^i - V_2^i}_{\L1([0,t];\L\infty)}
    \!\!\!\! & \leq & \!\!\!\!
    \norma{\mathcal{C}_K}_{\L\infty} \!\!\!
    \int_0^t \!\!
    \left[
      \norma{
        \mathcal{A}^i\left(\rho_1 (\tau)\right)
        -
        \mathcal{A}^i\left(\rho_2 (\tau)\right)
      }_{\L\infty} \!\!\!
      + \!
      \norma{p_1 (\tau) - p_2 (\tau)}_{\reali^m} \!
    \right] \!\!
    \d{\tau}
    \\
    \nonumber
    \!\!\!\! & \leq & \!\!\!\!
    \norma{\mathcal{C}_K}_{\L\infty}
    \int_0^t
    \left(
      L_A
      \norma{\rho_1 (\tau) - \rho_2 (\tau)}_{\L1}
      +
      \norma{p_1 (\tau) - p_2 (\tau)}_{\reali^m}
    \right)
    \d{\tau}
    \\
    \label{eq:VmenoVbis}
    \!\!\!\! & = & \!\!\!\!
    \O \left(
      \norma{\rho_1 - \rho_2}_{\L1 ([0,t];\L1)}
      +
      \norma{p_1 - p_2}_{\L1}
    \right) \,.
  \end{eqnarray}
  By Lemma~\ref{lem:stab-1} we get
  \begin{eqnarray*}
    \!\!\!\!
    & &
    \!\!\!\!
    \norma{\rho_1^i(t) - \rho_2^i(t)}_{\L1}
    \\
    \!\!\!\!
    & \leq &
    \!\!\!\!
    \norma{\rho_o^{1,i} - \rho_o^{2,i}}_{\L1} e^{\kappa t}
    +
    \norma{q}_{\L\infty}
    \norma{\div (V_1^i - V_2^i)}_{\L1 ([0,t];\L1)}
    e^{\kappa t}
    \\
    \!\!\!\!
    & &
    \!\!\!\!
    +
    \frac{\kappa_o e^{\kappa_o t} - \kappa e^{\kappa t}}{\kappa_o - \kappa} \!
    \left[
      \tv(\rho_o^{1,i})
      +
      d\, W_d  \norma{q}_{\L\infty} \!
      \norma{\nabla_x \div V_1^i}_{\L1 ([0,t];\L1)}
    \right] \!\!
    \norma{q'}_{\L\infty} \!
    \norma{V_1^i - V_2^i}_{\L1([0,t];\L\infty)}
    \\
    \!\!\!\!
    & = &
    \!\!\!\!
    \left(1+t\O\right)
    \norma{\rho_o^{1,i} - \rho_o^{2,i}}_{\L1}
    \\
    \!\!\!\!
    & &
    \!\!\!\!
    +
    \O
    \left(
      \norma{\div (V_1^i - V_2^i)}_{\L1 ([0,t];\L1)}
      +
      \left[
        1
        +
        \norma{\nabla_x \div V_1^i}_{\L1 ([0,t];\L1)}
      \right]
      \norma{V_1^i - V_2^i}_{\L1([0,t];\L\infty)}
    \right)
    \\
    \!\!\!\!
    & = &
    \!\!\!\!
    \left(1+t\O\right)
    \norma{\rho_o^{1,i} - \rho_o^{2,i}}_{\L1}
    \\
    \!\!\!\!
    & &
    \!\!\!\!
    +
    \O
    \left(
      1      +
      \norma{\rho_o^1}_{\L1}
      +
      \left( \norma{\rho_o^1}_{\L1} \right)^2
    \right)
    \left(
      \norma{\rho_1 - \rho_2}_{\L1 ([0,t];\L1)}
      +
      \norma{p_1 - p_2}_{\L1}
    \right).
  \end{eqnarray*}
  A further application of Lemma~\ref{lem:Gronwall}, using
  also~\eqref{eq:pp}, allows to conclude the proof of~3.~in
  Theorem~\ref{thm:main}.

  \paragraph{Stability with respect to $q$.}
  Fix a positive $T$.  For $j = 1,2$, let $(\rho_j,p_j)$
  solve~\eqref{eq:1} with $q$ replaced by $q_j$ and with the initial
  datum $(\rho_o,p_o)$ assigned at time $t=0$.  For $j = 1, 2$ and $i=
  1, \ldots, n$, define $V^i_j = v^i\left(t,x,\left(\mathcal{A}^i
      (\rho_j \left(t)\right)\right) (x), p_j (t)\right)$.
  Using~\eqref{eq:PrimaDikappat}, \eqref{eq:kappakappa},
  \eqref{eq:VmenoVbis}, \eqref{eq:quella}, \eqref{eq:DeltaDiv},
  \eqref{eq:prima} compute preliminary
  \begin{eqnarray*}
    \kappa_o
    & = &
    \O \,   \max_{j=1,2}\norma{q_j'}_{\L\infty},
    \\
    \kappa
    & = &
    \O \,   \max_{j=1,2}\norma{q_j'}_{\L\infty},
    \\
    \nonumber
    \norma{V^i_1 - V^i_2}_{\L1([0,t];\L\infty)}
    & = &
    \O
    \left(
      \norma{\rho_1 - \rho_2}_{\L1([0,t];\L1)}
      +
      \norma{p_1 - p_2}_{\L1}
    \right),    \\
    \nonumber
    \norma{\nabla_x \div V^i_1(\tau, x)}_{\L1 ([0,t];\L1)}
    & = &
    t \, \O,
    \\
    \nonumber
    \norma{\div \left(V^i_1(\tau,x) - V^i_2(\tau,x)\right)}_{\L1 ([0,t];\L1)}
    & = &
    \O
    \left(
      \norma{\rho_1-\rho_2}_{\L1([0,t];\L1)}
      +
      t \,\norma{p_1 - p_2}_{\C0}
    \right),
    \\
    \nonumber
    \norma{\div V^i_2(\tau,x)}_{\L1 ([0,t];\L1)}
    & = &
    t \, \O \,.
  \end{eqnarray*}
  Apply Lemma~\ref{lem:uniform-p-estimate} to obtain
  \begin{eqnarray*}
    \norma{p_1 (t) - p_2 (t)}_{\reali^m}
    & \leq &
    \O \, \norma{\rho_1 - \rho_2}_{\L1([0,t];\L1)} \,.
  \end{eqnarray*}
  Similarly, using Lemma~\ref{lem:stab-1},
  \begin{eqnarray*}
    & &
    \norma{\rho_1(t) - \rho_2(t)}_{\L1}
    \\
    & = &
    \O
    \left(
      \norma{q_1 - q_2}_{\W1\infty}
      +
      \norma{V_1 - V_2}_{\L1([0,t];\L\infty)}
      +
      \norma{\div (V_1 - V_2)}_{\L1 ([0,t];\L1)}
    \right)
    \\
    & = &
    \O
    \left(
      \norma{q_1 - q_2}_{\W1\infty}
      +
      \norma{\rho_1-\rho_2}_{\L1([0,t];\L1)}
      +
      t \, \norma{p_1 - p_2}_{\C0}
    \right) .
  \end{eqnarray*}
  A final application of Lemma~\ref{lem:Gronwall} completes the proof
  of this part.

  \paragraph{Stability with respect to $v$.}
  Fix a positive $T$. For $j = 1,2$, let $(\rho_j,p_j)$
  solve~\eqref{eq:1} with $v$ replaced by $v_j$ and with the initial
  datum $(\rho_o,p_o)$ assigned at time $t=0$. For $i= 1, \ldots, n$,
  define $V^i_j = v_j^i\left(t,x,\left(\mathcal{A}^i (\rho_j
      \left(t)\right)\right) (x), p_j (t)\right)$. Compute
  preliminary, using~\eqref{eq:kappakappa}, \textbf{(v)},
  \textbf{($\boldsymbol{\mathcal{A}}$)}
  \begin{eqnarray*}
    \kappa_o
    & = &
    \O,
    \\
    \kappa
    & = &
    \O,
    \\
    \nonumber
    \norma{V^i_1 - V^i_2}_{\L1([0,t];\L\infty)}
    & = &
    t \norma{v_1-v_2}_{\L\infty}
    \\
    & &
    +
    \O
    \left(
      \norma{\rho_1 - \rho_2}_{\L1([0,t];\L1)}
      +
      \norma{p_1 - p_2}_{\L1}
    \right),    \\
    \nonumber
    \norma{\nabla_x \div V^i_1(\tau, x)}_{\L1 ([0,t];\L1)}
    & = &
    t \, \O,
    \\
    \nonumber
    \norma{\div \left(V^i_1(\tau,x) - V^i_2(\tau,x)\right)}_{\L1 ([0,t];\L1)}
    & = &
    t \norma{\div (v_1-v_2)}_{\L\infty}
    \\
    & &
    +
    \O
    \left(
      \norma{\rho_1-\rho_2}_{\L1([0,t];\L1)}
      +
      t \,\norma{p_1 - p_2}_{\C0}
    \right),
  \end{eqnarray*}
  so that
  \begin{eqnarray*}
    & &
    \norma{\rho_1(t) - \rho_2(t)}_{\L1}
    \\
    & = &
    \O \frac{\kappa_o e^{\kappa_o t} - \kappa e^{\kappa t}}{\kappa_o - \kappa}
    \left[
      1
      +
      \norma{\nabla_x \div V_1}_{\L1 ([0,t];\L1)}
    \right]
    \norma{V_1 - V_2}_{\L1([0,t];\L\infty)}
    \\
    & &
    +
    \O \norma{\div (V_1 - V_2)}_{\L1 ([0,t];\L1)}
    \\
    & = &
    \O
    \norma{v_1 - v_2}_{\W1\infty}
    +
    \O
    \left(
      \norma{\rho_1-\rho_2}_{\L1 ([0,t];\L1)}
      +
      \norma{p_1 - p_2}_{\L1 ([0,t];\reali^m)}
    \right),
  \end{eqnarray*}
  and similarly to the previous step, applying
  Lemma~\ref{lem:uniform-p-estimate} and Lemma~\ref{lem:stab-1}
  \begin{eqnarray*}
    \norma{p_1 (t) - p_2 (t)}_{\reali^m}
    & \leq &
    \O \norma{\rho_1 - \rho_2}_{\L1 ([0,t];\L1)},
    \\
    \norma{\rho_1(t) - \rho_2(t)}_{\L1}
    & \leq &
    \O
    \norma{v_1 - v_2}_{\W1\infty}
    +
    \O
    \norma{\rho_1-\rho_2}_{\L1 ([0,t];\L1)},
  \end{eqnarray*}
  the proof of the stability with respect to $v$ is completed.

  \paragraph{Stability with respect to $F$.} Apply~\eqref{eq:p} in
  Lemma~\ref{lem:uniform-p-estimate} and Lemma~\ref{lem:stab-1} to
  obtain
  \begin{eqnarray*}
    \norma{p_1 (t) - p_2 (t)}_{\reali^m}
    & \leq &
    \O
    \left(
      t
      \norma{F_1 - F_2}_{\L\infty}
      +
      \norma{\rho_1 - \rho_2}_{\L1 ([0,t];\L1)}
    \right),
    \\
    \norma{\rho_1 (t) - \rho_2 (t)}_{\L1}
    & \leq &
    \O
    \left(\norma{\rho_1 - \rho_2}_{\L1 ([0,t];\L1)}
      +
      \norma{p_1 - p_2}_{\L1 ([0,t];\reali^m)}
    \right),
  \end{eqnarray*}
  and a further application of Lemma~\ref{lem:Gronwall} completes the
  proof.
\end{proofof}

\begin{proofof}{Corollary~\ref{cor:zero}}
  Define $K = \overline{B (\spt \rho_o, \norma{v}_{\L\infty}
    T)}$. Note that for any function $\rho \in \L1 (\reali^d;
  [0,R]^n)$ with $\spt \rho \subseteq K$,
  by~\textbf{($\boldsymbol{\mathcal{A}}$)}
  \begin{displaymath}
    \norma{\mathcal{A} (\rho)}_{\L\infty (\reali^d; \reali^d)}
    \leq
    L_A \norma{\rho}_{\L1 (\reali^d; \reali^n)}
    \leq
    L_A R \, \mathcal{L} ^d(K) \,.
  \end{displaymath}
  By~\eqref{eq:uniform-p-estimate}, for all $t \in [0,T]$, we have $\norma{p
    (t)}_{\reali^m} \leq P$ where
  \begin{displaymath}
    P
    =
    \left(
      \norma{p_o}_{\reali^m}
      +
      \left(1+L_B \, R \, \mathcal{L}^d(K) \right) \norma{C_F}_{\L1 ([0,T]; \reali)}
    \right)
    \exp \norma{C_F}_{\L1 ([0,T]; \reali)} \,.
  \end{displaymath}
  Let $\chi_x \in \Cc\infty (\reali^d; [0,1])$ be such that $\chi_x
  (x) =1$ for all $x \in K$. Similarly, let $\chi_A \in \Cc\infty
  (\reali^d; [0,1])$ such that $\chi_A (A) = 1$ for all $A \in
  \overline{B \left(0, L_A \, R \, \mathcal{L}^d(K)\right)}$ and let
  $\chi_p \in \Cc\infty (\reali^m; [0,1])$ be such that $\chi_p (p) =
  1$ for all $p \in \overline{B (0,P)}$. Then, \textbf{(v.2)} is
  satisfied with
  \begin{eqnarray*}
    \tilde v (t,x,A,p)
    & = &
    \chi_x (x) \, \chi_A (A) \, \chi_p (p) \, v (t,x,A,p),
    \\
    \mathcal{C}_K (x)
    & = &
    \chi_x (x) \,
    \norma{v}_{\C2 ([0,T] \cup K \times \overline{B (0, L_A R
        \mathcal{L}^d (K))} \times \overline{B (0, P)})}
    \,.
  \end{eqnarray*}
  By~1.~in~Definition~\ref{def:solution-1}, the solution $t \to
  \left(\rho (t), p (t)\right)$ to~\eqref{eq:1tilde} as constructed in
  Theorem~\ref{thm:main} is such that $\spt \rho (t) \subseteq K$ for
  all $t \in [0,T]$. Hence, $t \to \left(\rho (t), p (t)\right)$ also
  solves~\eqref{eq:1}.
\end{proofof}

\begin{proofof}{Proposition~\ref{prop:Ex1}}
  Assumption~\textbf{(v.1)} is immediate.
  The verification of~\textbf{(F)} and~\textbf{(q)}, with $R=1$, is
  straightforward.  Assumption~($\boldsymbol{\mathcal{A}}$) directly
  follows from the fact that the map $\nu \colon \reali^2 \to
  \reali^2$ defined by $\nu (x) = \left.x \middle/
    \sqrt{1+\norma{x}^2_{\reali^2}} \right.$ is of class $\C3 (\reali^2;
  \reali^2)$ and $\norma{\nu}_{\C3} < +\infty$. By the standard
  properties of the convolution product, we deduce that
  \begin{eqnarray*}
    & &
    \norma{\mathcal B(\rho_1) - \mathcal B(\rho_2)}_{\W1\infty}
    \\
    & = &
    \norma{\mathcal B(\rho_1) - \mathcal B(\rho_2)}_{\L\infty}
    + \norma{D_p\mathcal B(\rho_1) - D_p\mathcal B(\rho_2)}_{\L\infty}
    \\
    & = &
    \sup_{(p^1, p^2) \in \reali^4} \norma{\left(\rho^1_1 \ast
        \bar \eta(p^1), \rho^2_1 \ast \bar \eta(p^2)\right)}_{\reali^2}
    + \sup_{(p^1, p^2) \in \reali^4} \norma{\left(\rho^1_1 \ast
        D\bar \eta(p^1), \rho^2_1 \ast D\bar \eta(p^2)\right)}_{\reali^4}
    \\
    & \leq & \norma{\bar \eta}_{\W1\infty} \norma{\rho_1 - \rho_2}_{\L1},
  \end{eqnarray*}
  which implies ($\boldsymbol{\mathcal{B}}$), concluding the proof.
\end{proofof}

\begin{proofof}{Proposition~\ref{prop:Ex2}}
  Assumption~\textbf{(v.1)} is immediate.  The verification
  of~\textbf{(F)} and~\textbf{(q)}, with $R=1$, is straightforward.
  Assumption~($\boldsymbol{\mathcal{A}}$) follows in the same way as
  in the proof of Proposition~\ref{prop:Ex1}.  Standard properties of
  the convolution product permit to verify
  assumption~($\boldsymbol{\mathcal{B}}$).
\end{proofof}

\begin{proofof}{Proposition~\ref{prop:Ex3}}
  The proofs of~\textbf{(v.1)}, \textbf{(F)} and~\textbf{(q)} are
  immediate, with $R=1$. To
  prove~\textbf{($\boldsymbol{\mathcal{A}}$)}, note that the real
  valued function $\phi (\xi) = \left. \xi \middle /
    \sqrt{1+\xi^2}\right.$ is globally Lipschitz continuous and the
  map $(r_1,r_2) \to \phi (r_1 \, r_2)$ is Lipschitz continuous for
  $(r_1,r_2) \in [0,1]^2$. The standard properties of the convolution
  also ensure the Lipschitz continuity and the boundedness of the maps
  $\rho \to \eta * (\rho - \bar \rho)$ and $\rho \to \nabla_x
  (\eta*\rho)$ in the required norms,
  proving~\textbf{($\boldsymbol{\mathcal{A}}$)}. The proof
  of~\textbf{($\boldsymbol{\mathcal{B}}$)} is entirely analogous.
\end{proofof}

\medskip

\smallskip\noindent\textbf{Acknowledgment:} This work was partially
supported by the INDAM--GNAMPA project \emph{Conservation Laws: Theory
  and Applications} and the Graduiertenkolleg 1932
  \emph{``Stochastic Models for Innovations in the Engineering Sciences''}.

\small{

  \bibliography{10}

\begin{thebibliography}{10}

\bibitem{BCG_Nonlinearity}
R.~Borsche, R.~M. Colombo, and M.~Garavello.
\newblock On the coupling of systems of hyperbolic conservation laws with
  ordinary differential equations.
\newblock {\em Nonlinearity}, 23(11):2749--2770, 2010.

\bibitem{BCG_JDE}
R.~Borsche, R.~M. Colombo, and M.~Garavello.
\newblock Mixed systems: {ODE}s - balance laws.
\newblock {\em J. Differential Equations}, 252(3):2311--2338, 2012.

\bibitem{Raul1}
R.~Borsche, A.~Klar, S.~K{\"u}hn, and A.~Meurer.
\newblock Coupling traffic flow networks to pedestrian motion.
\newblock {\em Math. Models Methods Appl. Sci.}, 24(2):359--380, 2014.

\bibitem{Raul2}
R.~Borsche and A.~Meurer.
\newblock Interaction of road networks and pedestrian motion at crosswalks.
\newblock {\em Discrete Contin. Dyn. Syst. Ser. S}, 7(3):363--377, 2014.

\bibitem{bressan-piccoli-book}
A.~Bressan and B.~Piccoli.
\newblock {\em Introduction to the mathematical theory of control}, volume~2 of
  {\em AIMS Series on Applied Mathematics}.
\newblock American Institute of Mathematical Sciences (AIMS), Springfield, MO,
  2007.

\bibitem{m3as-cgm}
R.~M. Colombo, M.~Garavello, and M.~L{\'e}cureux-Mercier.
\newblock A class of nonlocal models for pedestrian traffic.
\newblock {\em Math. Models Methods Appl. Sci.}, 22(4):1150023, 34, 2012.

\bibitem{ColomboMercier}
R.~M. Colombo and M.~L{\'e}cureux-Mercier.
\newblock An analytical framework to describe the interactions between
  individuals and a continuum.
\newblock {\em J. Nonlinear Sci.}, 22(1):39--61, 2012.

\bibitem{ColomboMagali}
R.~M. Colombo and M.~L{\'e}cureux-Mercier.
\newblock Nonlocal crowd dynamics models for several populations.
\newblock {\em Acta Math. Sci. Ser. B Engl. Ed.}, 32(1):177--196, 2012.

\bibitem{CristianiPiccoliTosin}
E.~Cristiani, B.~Piccoli, and A.~Tosin.
\newblock Multiscale modeling of granular flows with application to crowd
  dynamics.
\newblock {\em Multiscale Modeling \& Simulation}, 9(1):155--182, 2011.

\bibitem{Filippov}
A.~F. Filippov.
\newblock {\em Differential equations with discontinuous righthand sides}.
\newblock Kluwer Academic Publishers Group, Dordrecht, 1988.
\newblock Translated from the Russian.

\bibitem{helbing1995social}
D.~Helbing and P.~Molnar.
\newblock Social force model for pedestrian dynamics.
\newblock {\em Physical review E}, 51(5):4282, 1995.

\bibitem{hughes2002continuum}
R.~L. Hughes.
\newblock A continuum theory for the flow of pedestrians.
\newblock {\em Transportation Research Part B: Methodological}, 36(6):507--535,
  2002.

\bibitem{Kruzkov}
S.~N. Kru{\v{z}}kov.
\newblock First order quasilinear equations with several independent variables.
\newblock {\em Mat. Sb. (N.S.)}, 81 (123):228--255, 1970.

\bibitem{magali-improved}
M.~{L{\'e}cureux-Mercier}.
\newblock {Improved stability estimates on general scalar balance laws}.
\newblock {\em ArXiv e-prints}, Oct. 2010.

\bibitem{LeVeque}
R.~J. LeVeque.
\newblock {\em Finite volume methods for hyperbolic problems}.
\newblock Cambridge Texts in Applied Mathematics. Cambridge University Press,
  Cambridge, 2002.

\bibitem{PiccoliTosin2009}
B.~Piccoli and A.~Tosin.
\newblock Pedestrian flows in bounded domains with obstacles.
\newblock {\em Continuum Mechanics and Thermodynamics}, 21:85--107, 2009.
\newblock 10.1007/s00161-009-0100-x.

\bibitem{PiccoliTosin}
B.~Piccoli and A.~Tosin.
\newblock Time-evolving measures and macroscopic modeling of pedestrian flow.
\newblock {\em Archive for Rational Mechanics and Analysis}, pages 1--32, 2010.
\newblock 10.1007/s00205-010-0366-y.

\bibitem{sethian1999fast}
J.~A. Sethian.
\newblock Fast marching methods.
\newblock {\em SIAM review}, 41(2):199--235, 1999.

\bibitem{Toro}
E.~F. Toro.
\newblock {\em Riemann solvers and numerical methods for fluid dynamics}.
\newblock Springer-Verlag, Berlin, third edition, 2009.
\newblock A practical introduction.

\end{thebibliography}

  \bibliographystyle{abbrv} }

\end{document}